\documentclass[a4paper,twoside]{article}  

\usepackage{amssymb,amsmath,amsthm,dsfont,amsfonts,color,latexsym,CJK}

\usepackage{hyperref}

\usepackage{mathrsfs} 

\definecolor{blue}{rgb}{0.00,0.00,1.00}
\definecolor{red}{rgb}{1.00,0.00,0.00}

\renewcommand{\baselinestretch}{1.2}
       \hoffset=0truemm
       \voffset=0truemm
       \topmargin=-5truemm
       \oddsidemargin=-5truemm
       \evensidemargin=-5truemm

     \textheight=240truemm
     \textwidth=170truemm

%
\def\bq{\begin{equation}}
\def\eq{\end{equation}}
\def\ba{\begin{array}{ccc}}
\def\bal{\begin{array}{lll}}
\def\ea{\end{array}}

 \def\lt#1{\left#1}\def\rt#1{\right#1}
\def\({\left(}\def\){\right)}
\def\[{\left[}\def\]{\right]}
\def\<{\langle}\def\>{\rangle}

    
    \def \O   {\mathbb{O}}

    \def \R   {\mathbb{R}}

    \def\eps  {\epsilon}
    \def\intr {\int_{\R^3}}
    \def\intrr {\int_{\R^6}}
    
    \def\intt {\int^t_0}


    \def \pt   {\partial}
    
    \def \Dt   {\frac{\rm d}{{\rm d}t}}
    
    \def \dt    {\partial_t}

    \def \dxa   {\partial^{\alpha}_x}
    \def \dxb   {\partial^{\beta}_x}

    \def \dvb   {\partial^{\beta}_v}

    \def \divx  {{\rm div}_x}

    \def\Tdx   {\nabla_x}
    \def\Tdv   {\nabla_v}

%
       \def\bq{\begin{equation}}
       \def\eq{\end{equation}}
       \def\be{\begin{equation}}
       \def\ee{\end{equation}}
       \def\bma#1\ema{{\allowdisplaybreaks\begin{align}#1\end{align}}}
       \def\bmas#1\emas{{\allowdisplaybreaks\begin{align*}#1\end{align*}}}
       \def\bln#1\eln{{\allowdisplaybreaks\begin{aligned}#1\end{aligned}}}
       \def\nnm{\notag}
       \def\bgr#1\egr{\allowdisplaybreaks\begin{gather}#1\end{gather}}
       \def\bgrs#1\egrs{\allowdisplaybreaks\begin{gather*}#1\end{gather*}}

%
%

       \theoremstyle{plain}
       \newtheorem{lem}{\bf Lemma}[section]
       \newtheorem{thm}[lem]{\textbf{Theorem}}



%
%
\begin{document}

\title{ Diffusion Limit and the optimal convergence rate of the Vlasov-Poisson-Fokker-Planck system }
\author{   Mingying Zhong$^*$\\[2mm]
 \emph{\small\it  $^*$College of  Mathematics and Information Sciences,
    Guangxi University, P.R.China.}\\
    {\small\it E-mail:\ zhongmingying@sina.com}\\[5mm]
    }
\date{ }

\pagestyle{myheadings}
\markboth{Vlasov-Poisson-Fokker-Planck system }%
{ M.-Y. Zhong }

 \maketitle

 \thispagestyle{empty}

\begin{abstract}\noindent
In the present paper, we study the diffusion limit of the classical solution to the Vlasov-Poisson-Fokker-Planck (VPFP) system with initial data near a global Maxwellian. We prove the convergence and establish the optimal convergence rate of the global strong solution to the VPFP system towards the solution to the  drift-diffusion-Poisson  system based on the spectral analysis with precise estimation on the initial layer.

\medskip
 {\bf Key words}.  Vlasov-Poisson-Fokker-Planck system,  spectral analysis, diffusion limit, convergence rate.

\medskip
 {\bf 2010 Mathematics Subject Classification}. 76P05, 82C40, 82D05.
\end{abstract}

%

\tableofcontents

\section{Introduction}
The Vlasov-Poisson-Fokker-Planck  (VPFP)  system can be used to model the time evolution of dilute charged particles governed by the electrostatic force coming from their (self-consistent)
Coulomb interaction. The collision term in the kinetic equation is the Fokker-Planck operator that describes the Brownian force.  In general, the rescaled VPFP system defined on $\R^3_x\times\R^3_v$ takes the form
 \bgr
\dt F_{\eps}+\frac{1}{\eps}v\cdot\Tdx F _{\eps}+\frac{1}{\eps}\Tdx \Phi_{\eps}\cdot\Tdv F_{\eps}=\frac{1}{\eps^2}\Tdv\cdot(\Tdv F_{\eps}+vF_{\eps}),\label{VPFP1}\\
\Delta_x\Phi_{\eps}=\intr F_{\eps} dv-1,\label{VPFP3}\\
 F_{\eps}(0,x,v)=F_{0}(x,v),  \label{VPFP3i}
\egr
where $\eps>0$ is a small parameter related to the mean free path, $F_{\eps}=F_{\eps}(t,x,v)$ is the number
density function of charged particles, and   $\Phi_{\eps}(t,x)$ denotes the electric potential, respectively.
Throughout this paper, we assume $\eps\in(0,1)$.

In this paper, we study the diffusion limit
of the strong solution to the rescaled VPFP system~\eqref{VPFP1}--\eqref{VPFP3i}  with initial data near the normalized Maxwellian $M(v)$ given by
$$
 M=M(v)=\frac1{(2\pi)^{3/2}}e^{-\frac{|v|^2}2},\quad v\in\R^3.
$$

Our motivation  is to prove the
convergence and establish the convergence rate of strong solutions $(F_{\eps}, \Phi_{\eps})$ to \eqref{VPFP1}--\eqref{VPFP3i} towards  $(N M, \Phi)$, where $(N,\Phi)(t,x)$ is the solution of the following drift-diffusion-Poisson (DDP) system:
\bgr
\dt N+\Tdx\cdot(\Tdx N+N\Tdx \Phi)=0, \label{DDP1}\\
\Delta_x\Phi=N-1,\label{DDP2}\\
N(0,x)=N_0(x)=\intr F_0 dv.\label{DDP3}
\egr
Here, $N=N(t,x)$ stands for densities of charged particles in the ionic
solution and $\Phi(t,x)$ is the self-consistent electric potential.

The DDP system \eqref{DDP1}--\eqref{DDP3} provides a continuum description of the evolution of
charged particles via macroscopic  quantities, for example, the particle
density, the current density etc., which have cheaper costs for numerics. Such
continuum models can be (formally) derived from kinetic models by coarse graining
methods, like the moment method, the Hilbert expansion method and so on \cite{DDP-5,Markowich,Maxwell}.  Concerning the mathematical analysis, the initial value problem and the initial
boundary value problem of the DDP system have been extensively studied, we refer
to \cite{DDP-0,DDP-1,DDP-2,DDP-3,DDP-4,DDP-5,DDP-6}.

The existence and uniqueness of solutions to the initial value problem
or the initial boundary value problem of the VPFP system have been investigated
in the literature. We refer to \cite{VPFP-1,VPFP-2,VPFP-3,Hwang} for results on the classical solutions and
to \cite{VPFP-4,VPFP-5,VPFP-6,VPFP-7} for weak solutions and their regularity. Concerning the long-time
behavior of the VPFP system, we refer to \cite{time-1,time-2,time-3,Hwang}. The diffusion limit of the weak solution to the VPFP system has been studied
extensively in the literature (cf. \cite{FL-1,FL-2,FL-3,FL-4,FL-5,FL-6}). In
\cite{FL-1,FL-2,FL-5}, the authors  proved the convergence
of suitable solutions to the single species VPFP system towards a solution to the
drift-diffusion-Poisson model in the whole space.  In \cite{FL-6}, the authors established a global convergence result of the renormalized solution to the multiple species VPFP system towards a solution to the
Poisson-Nernst-Planck system. The spectrum structure and the optimal decay rate of the classical solution to the VPFP system were investigated in \cite{Li3}.

However, in contrast to the works on weak solution~\cite{FL-1,FL-2,FL-3,FL-4,FL-5,FL-6}, the diffusion limit  of the classical solution to the VPFP system \eqref{VPFP1}--\eqref{VPFP3i} has not been given despite of its importance.
On the other hand, the convergence rate of the weak solution to the VPFP system towards the solution to the DDP system hasn't been obtained in \cite{FL-1,FL-2,FL-3,FL-4,FL-5,FL-6}. Therefore, it is natural to establish an explicit convergence rate of the solution to the VPFP system towards its diffusion limit.

First of all, the VPFP system \eqref{VPFP1}--\eqref{VPFP3} has a stationary solution $(F_*,\Phi_*)=(M(v),0)$.
Hence, we define the perturbation of $F_{\eps}$ as
$$
 F_{\eps}=M+\sqrt{M}f_{\eps}.
$$
Then Cauchy problem of the VPFP system~\eqref{VPFP1}--\eqref{VPFP3i}  can be rewritten as
 \bgr
 \dt f_{\eps}+\frac{1}{\eps}v\cdot\Tdx f_{\eps}-\frac{1}{\eps}v\sqrt{M}\cdot\Tdx \Phi_{\eps}-\frac{1}{\eps^2}Lf_{\eps}
=\frac{1}{\eps} G(f_{\eps}),\label{VPFP4}\\
\Delta_x\Phi_{\eps}=\intr f_{\eps}\sqrt{M}dv,\label{VPFP5}\\
f_{\eps}(0,x,v)=f_0(x,v)=:\frac{F_0-M}{\sqrt{M}} ,\label{VPFP6}
\egr
where
 \bma
 Lf_{\eps}&= \Delta_vf_{\eps}-\frac{|v|^2}4f_{\eps}+\frac32f_{\eps},\\
 G(f_{\eps})& =\frac12 (v\cdot\Tdx\Phi_{\eps})f_{\eps}- \Tdx\Phi_{\eps}\cdot\Tdv f_{\eps} .
 \ema

Also, we define the perturbation of $N$ as
$$n=N-1.$$
Then Cauchy problem of the DDP system~\eqref{DDP1}--\eqref{DDP3} becomes
\bgr
\dt n-\Delta_x n+n=-\Tdx\cdot(n \Tdx \Phi), \label{n_1}\\
\Delta_x\Phi=n,\\
n(0,x)=n_0=\intr f_0\sqrt{M} dv. \label{n_2}
\egr

The aim of this paper  is to prove the
convergence and establish the convergence rate of strong solutions $(f_{\eps}, \Phi_{\eps})$ to \eqref{VPFP4}--\eqref{VPFP6} towards  $(n\sqrt M , \Phi)$, where $(n,\Phi)(t,x)$ is the solution of the DDP system \eqref{n_1}--\eqref{n_2}. In general, the convergence is not uniform near $t=0$. The breakdown of the uniform convergence near $t=0$ is the initial layer of the VPFP system. But we can show that if the initial data $f_0=n_0\sqrt M$,
then the uniform convergence is up to $t=0$.
\bigskip

  We denote $L^2(\R^3)$  a Hilbert space of complex-value functions $f(v)$
on $\R^3$ with the inner product and the norm
$$
(f,g)=\intr f(v)\overline{g(v)}dv,\quad \|f\|=\(\intr |f(v)|^2dv\)^{1/2}.
$$

The nullspace of the operator $L$, denoted by $N_0$, is a subspace spanned by $\sqrt{M}$.
Let $P_{0}$ be the projection operators from $L^2(\R^3_v)$
to the subspace $N_0$ with
\be
 P_{0}f=(f,\sqrt M)\sqrt M,   \quad P_1=I-P_{0}. \label{Pdr}
 \ee

Corresponding to the linearized operator $L$, we define the following dissipation norm:
\be \|f\|^2_{L^2_\sigma}=\|\Tdv f\|^2+\|v f\|^2 . \ee
This norm is stronger than the $L^2$-norm because
\be \|f\|^2_{L^2_\sigma}\ge 2\|\Tdv f\|\|vf\|\ge -2(\Tdv f,vf)=3\|f\|^2. \ee

From \cite{Yu}, the linearized operator $L$ is non-positive and
 locally coercive in the sense that there is a constant $0<\mu<1$ such that
\be
 (Lf,f)\le -\|P_1f\|^2,\quad  (Lf,f)\leq - \mu\|P_1f\|^2_{L^2_\sigma} .\label{L_4}
 \ee

\noindent\textbf{Notations:} \ \ Before state the main results in this paper, we list some notations. For any $\alpha=(\alpha_1,\alpha_2,\alpha_3)\in \mathbb{N}^3$ and $\beta=(\beta_1,\beta_2,\beta_3)\in \mathbb{N}^3$, denotes
$$\dxa=\pt^{\alpha_1}_{x_1}\pt^{\alpha_2}_{x_2}\pt^{\alpha_3}_{x_3},\quad \dvb=\pt^{\beta_1}_{x_1}\pt^{\beta_2}_{x_2}\pt^{\beta_3}_{x_3}.$$
The Fourier transform of $f=f(x,v)$
is denoted by
$$\hat{f}(\xi,v)=\mathcal{F}f(\xi,v)=\frac1{(2\pi)^{3/2}}\intr f(x,v)e^{-  i x\cdot\xi}dx.$$

In the following,  denote by $\|\cdot\|_{L^2_{x}(L^2_{v})}$  the norm of the function space $L^2(\R^3_x\times \R^3_v)$, and denote by $\|\cdot\|_{L^2_x}$ and $\|\cdot\|_{L^2_v}$  the norms of the function spaces $L^2(\R^3_x)$  and $L^2(\R^3_v)$ respectively. For any integer $k\ge 1$, we denote by
$\|\cdot\|_{H^k_x(L^2_v)}$ and $\|\cdot\|_{H^k_x }$ the norms of the function spaces
$H^k(\R^3_x,L^2(\R^3_v))$ and $H^k(\R^3_x)$ respectively.

\bigskip

Now we are ready to state  main results in this paper.

\begin{thm}\label{existence}
There exist small positive constants $\delta_0$ and $\eps_0$ such that  if the initial data $f_0$ satisfies $\|f_{0}\|_{H^4_x(L^2_v)} +\|(f_0,\sqrt{M})\|_{L^{1}_x}\le \delta_0$, then for any $\eps\in (0,\eps_0)$, there exists a unique function $f_{\eps}(t)$ to the VPFP system \eqref{VPFP4}--\eqref{VPFP6} satisfying
\be \|f_{\eps}(t) \|_{H^4_x(L^2_v)}+\|\Tdx\Phi_{\eps}(t) \|_{H^4_x }\le C\delta_0e^{-\frac t2},\quad t>0,  \ee
where $\Tdx\Phi_{\eps}(t)=\Tdx\Delta^{-1}_x(f_{\eps},\sqrt{M})$ and $C>0$ is a constant independent of $\eps$.

There exists a small constant $\delta_0>0$ such that if $\|f_{0}\|_{H^4_x(L^2_v)} +\|(f_0,\sqrt{M})\|_{L^{1}_x}\le \delta_0$, then the DDP system \eqref{n_1}--\eqref{n_2} admits a unique global solution $n(t)$ satisfying
\be
\|n (t)\|_{H^4_x}+\|\Tdx\Phi(t)\|_{H^4_x}\le C\delta_0 e^{-\frac t2},
\ee
where $\Tdx\Phi(t)=\Tdx\Delta^{-1}_xn(t)$ and $C>0$ is a constant.
\end{thm}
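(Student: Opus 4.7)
The two assertions are largely independent. I would treat the simpler DDP statement first and then, for VPFP, refine the same scheme to handle the singular $1/\eps$ and $1/\eps^2$ factors. Both parts combine a sharp linear decay estimate with a high-order energy inequality that is closed by a standard continuity argument.

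\emph{DDP part.} The linear part of \eqref{n_1} is the confined heat equation $\dt n -\Delta_x n + n = 0$, whose semigroup $e^{t(\Delta_x-I)}$ decays at rate $e^{-t}$ in every $H^k_x$. Writing the equation in mild form
\[
 n(t) = e^{t(\Delta_x-I)}n_0 - \int_0^t e^{(t-s)(\Delta_x-I)}\Tdx\cdot(n\Tdx\Phi)(s)\,ds,
\]
with $\Tdx\Phi=\Tdx\Delta^{-1}_xn$, I would run a contraction in the Banach space $X=\{n\in C([0,\infty);H^4_x) : \|n\|_X := \sup_{t\ge 0}e^{t/2}(\|n(t)\|_{H^4_x}+\|\Tdx\Phi(t)\|_{H^4_x})<\infty\}$. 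The Sobolev algebra bound $\|n\Tdx\Phi\|_{H^4_x}\lesssim \|n\|_{H^4_x}\|\Tdx\Phi\|_{H^4_x}$, the $H^k$-boundedness of $\Tdx\Delta^{-1}_x$ at high frequency, and the $L^1_x$ hypothesis on $n_0$ (needed for the low-frequency control of $\Tdx\Phi$), together with smallness of $\delta_0$, close the fixed point and yield the stated $e^{-t/2}$ decay.

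\emph{VPFP part.} A local classical solution in $H^4_x(L^2_v)$ is produced by a standard linearized iteration; the global extension and the $\eps$-uniform decay are obtained simultaneously from a Lyapunov-type energy estimate. Applying $\dxa$ for $|\alpha|\le 4$ to \eqref{VPFP4} and pairing with $\dxa f_\eps$, the streaming term $\eps^{-1}v\cdot\Tdx$ is skew-adjoint and drops out, \eqref{L_4} gives the dissipation $\eps^{-2}\mu\|\dxa P_1f_\eps\|^2_{L^2_\sigma}$, the Poisson term $-\eps^{-1}(v\sqrt M\cdot\Tdx\Phi_\eps, \dxa f_\eps)$ pairs only against $\dxa P_1 f_\eps$ because $v\sqrt M\perp N_0$ (so it is absorbed into the $\eps^{-2}$ dissipation at the price of $\|\dxa\Tdx\Phi_\eps\|^2$), and the nonlinearity $\eps^{-1}G(f_\eps)$, rewritten via integration by parts in $v$, is of type (macroscopic)$\times$(microscopic) and controlled by $\delta_0$ times the dissipation. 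Dissipation of the macroscopic density $n_\eps=(f_\eps,\sqrt M)$ and of $\Tdx\Phi_\eps$ is not seen at this level and must be extracted by adding a Kawashima-type interaction term of the form $-\kappa\sum_{|\alpha|\le 3}(\dxa P_0 f_\eps,\,\eps^{-1}\dxa v\cdot P_1 f_\eps)$ to the energy; the $\eps^{-1}$ prefactor cancels against the streaming term and produces an order-one dissipation of $\Tdx n_\eps$. Finally, the Poisson coupling $\Delta_x\Phi_\eps=n_\eps$ delivers a mass term $\|\Tdx\Phi_\eps\|^2$ that plays the role of the $+n$ term in the DDP equation and upgrades polynomial to exponential decay, uniformly in $\eps$. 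Combined with the low-frequency spectral information from \cite{Li3} and the $L^1_x$ bound on $(f_0,\sqrt M)$, one arrives at a Lyapunov inequality $\Dt\mathcal{L}+\mathcal{L}\le C\delta_0\mathcal{L}$, giving both global existence and the rate $e^{-t/2}$.

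\emph{Main obstacle.} The core difficulty is uniformity in $\eps$. The $1/\eps$ singularity in the transport and electric terms is only formally compensated by the $1/\eps^2$ dissipation, which controls $P_1 f_\eps$ alone; obtaining $O(1)$ dissipation for the macroscopic piece $n_\eps$ and for $\Tdx\Phi_\eps$ requires an interaction functional whose $\eps$-scaling survives the limit, while remaining compatible with the singular nonlinear term $\eps^{-1}\Tdx\Phi_\eps\cdot\Tdv f_\eps$. Tracking the low-frequency structure of the linearized operator from \cite{Li3} to justify the exponential rate $e^{-t/2}$ rather than mere polynomial decay, uniformly as $\eps\to 0$, is the central technical point.
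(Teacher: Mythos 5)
Your DDP argument (mild formulation plus contraction in an exponentially weighted space, with the $L^1_x$ hypothesis controlling the low frequencies of $\Tdx\Phi$) is a legitimate alternative to the paper, which instead proves Lemma \ref{energy3} by the same energy method as the VPFP part; either route works for that half. For the VPFP half your architecture (micro--macro energy estimates, $\eps^{-2}$ coercivity of $L$ on $P_1f_\eps$, extra macroscopic dissipation from a cross term, field coupling supplying the damping, smallness closing the nonlinearity) is the paper's, but the specific cross functional you write down, $-\kappa\sum_{|\alpha|\le 3}(\dxa P_0 f_\eps,\,\eps^{-1}\dxa v\cdot P_1 f_\eps)$, has the wrong $\eps$-weight, and this is exactly the uniformity-in-$\eps$ issue you yourself identify as the crux. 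In the diffusive scaling the momentum moment obeys $\dt m_\eps=-\eps^{-2}m_\eps-\eps^{-1}\divx(f_\eps,v\otimes v\sqrt M)+\eps^{-1}\Tdx\Phi_\eps+\dots$, so when $\dt$ falls on the microscopic factor of your functional the stiff relaxation $-\eps^{-2}m_\eps$ produces a term of size $\eps^{-3}\|\dxa n_\eps\|\,\|\dxa P_1f_\eps\|$, which after Cauchy--Schwarz leaves $\eps^{-4}\|P_1f_\eps\|^2$ --- not absorbable by the available dissipation $\eps^{-2}\|P_1f_\eps\|^2_{L^2_\sigma}$ uniformly in $\eps$; moreover the functional itself is not dominated by the energy uniformly in $\eps$. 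The correct weight is $+\eps$, not $\eps^{-1}$: this is what the paper obtains automatically by substituting the microscopic balance \eqref{p_c} into the continuity equation \eqref{G_3a} to get \eqref{G_9a}, whose cross terms in \eqref{E_1a} are $\eps\intr\dxa\divx m_\eps\,\dxa n_\eps\,dx$ and $\eps\intr\dxa m_\eps\cdot\dxa\Tdx\Phi_\eps\,dx$; with that weight the macroscopic dissipation of $n_\eps$, $\Tdx n_\eps$, $\Tdx\Phi_\eps$ comes out at order one and all error terms close for small $\eps$.

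A second, related point: the paper does not absorb the singular electric term $\eps^{-1}(v\sqrt M\cdot\Tdx\Phi_\eps,\dxa f_\eps)$ into the $\eps^{-2}$ dissipation. Using $\dt n_\eps=-\eps^{-1}\divx m_\eps$ and $\Delta_x\Phi_\eps=n_\eps$, this term is an exact derivative, $\tfrac12\Dt\|\dxa\Tdx\Phi_\eps\|^2_{L^2_x}$, which is how $\|\Tdx\Phi_\eps\|^2_{H^4_x}$ enters the Lyapunov functional in \eqref{G_0}; together with the $-\Phi_\eps$ pairing \eqref{en_6} (which yields the dissipation $\|\Tdx\Phi_\eps\|^2_{H^3_x}+\|n_\eps\|^2_{H^3_x}$) this is what delivers the stated decay of $\|\Tdx\Phi_\eps(t)\|_{H^4_x}$, whose low-frequency part is not controlled by $\|f_\eps(t)\|_{H^4_x(L^2_v)}$ alone. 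Your absorption-into-dissipation variant can be made to work, but only if you also carry out the $-\Phi_\eps$ pairing explicitly so that the field norm appears under the time derivative and in the dissipation; as written, the proposal only gestures at this ("the Poisson coupling delivers a mass term"), so the decay of the field norm is not actually established. Finally, no spectral information from \cite{Li3} is needed for this theorem; the $L^1_x$ assumption enters only through the bound $\|\Tdx\Phi_0\|_{H^4_x}\le C(\|f_0\|_{H^4_x(L^2_v)}+\|(f_0,\sqrt M)\|_{L^1_x})$ on the initial field energy.
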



\begin{thm}\label{thm1.1}
Let $(f_{\eps},\Phi_{\eps})=(f_{\eps}(t,x,v),\Phi_{\eps}(t,x)) $ be the global solution to the VPFP system \eqref{VPFP4}--\eqref{VPFP6}, and let $(n,\Phi)=(n,\Phi)(t,x)$ be the global solution to the DDP system \eqref{n_1}--\eqref{n_2}.  Then, there exist two small positive constants $\delta_0$ and $\eps_0$ such that if the initial data $f_0$ satisfies $\|f_{0}\|_{H^4_x(L^2_v)}+\|\Tdv f_{0}\|_{H^3_x(L^2_v)}+\||v | f_{0}\|_{H^3_x(L^2_v)}+\|(f_0,\sqrt{M})\|_{L^{1}_x}\le \delta_0$, then for any $\eps\in (0,\eps_0)$, 
\be \|f_{\eps}(t)-n(t)\sqrt M\|_{H^2_x(L^2_v)}+\|\Tdx\Phi_{\eps}(t)-\Tdx\Phi(t)\|_{H^2_x }\le C\delta_0\(\eps e^{-\frac{t}{2}}+e^{-\frac{at}{\eps^2}}\),  \label{limit0}\ee
where  $a>0$ and $C>0$ are two constants independent of $\eps$. 

Moreover, if the initial data $f_0=n_0 \sqrt M\in N_0$  and   $\|n_{0}\|_{H^4_x} +\|n_0\|_{L^{1}_x}\le \delta_0$, then we have
\be
\|f_{\eps}(t)-n(t)\sqrt M\|_{H^2_x(L^2_v)}+\|\Tdx\Phi_{\eps}(t)-\Tdx\Phi(t)\|_{H^2_x }\le C\delta_0\eps e^{-\frac{t}{2}} . \label{limit_1a}
\ee
\end{thm}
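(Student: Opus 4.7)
The plan is to compare $f_{\eps}$ with $n\sqrt M+\eps f_1$, where $f_1$ is a Chapman--Enskog corrector designed to cancel the $O(1/\eps)$ obstruction, and to separately extract the $e^{-at/\eps^2}$ initial layer produced by $P_1f_0$ via the spectral analysis of the linearized VPFP operator.

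First, using the identity $L(v_i\sqrt M)=-v_i\sqrt M$ (a direct computation from $L=\Delta_v-|v|^2/4+3/2$), I would define
\[
f_1:=-v\sqrt M\cdot\bigl(\Tdx n-(1+n)\Tdx\Phi\bigr),
\]
so that $Lf_1=v\sqrt M\cdot(\Tdx n-(1+n)\Tdx\Phi)$ and $(f_1,\sqrt M)=0$. Setting $g_{\eps}:=f_{\eps}-n\sqrt M-\eps f_1$ and substituting into \eqref{VPFP4}--\eqref{VPFP5} gives an equation of the form
\[
\dt g_{\eps}+\tfrac{1}{\eps}v\cdot\Tdx g_{\eps}-\tfrac{1}{\eps}v\sqrt M\cdot\Tdx\Psi_{\eps}-\tfrac{1}{\eps^2}Lg_{\eps}=\tfrac{1}{\eps}\widetilde G(g_{\eps},f_{\eps})+\eps R(n,\Phi),
\]
where $\Psi_{\eps}:=\Phi_{\eps}-\Phi$ satisfies $\Delta_x\Psi_{\eps}=(g_{\eps},\sqrt M)$ (thanks to $(f_1,\sqrt M)=0$), $\widetilde G$ collects the bilinear cross terms, and $R$ gathers $\dt f_1$, $v\cdot\Tdx f_1$ and the quadratic residues; the $O(1/\eps)$ obstruction is cancelled exactly by the choice of $f_1$, while the leading residue $\dt n\sqrt M$ vanishes thanks to the DDP equation \eqref{n_1}. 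By Theorem~\ref{existence}, $\|n\|_{H^4_x}+\|\Tdx\Phi\|_{H^4_x}+\|f_{\eps}\|_{H^4_x(L^2_v)}\lesssim\delta_0 e^{-t/2}$, so $\|R(t)\|_{H^2_x(L^2_v)}\lesssim\delta_0 e^{-t/2}$ uniformly in $\eps$.

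Next, I would run a macro-micro energy estimate for $g_{\eps}$ in $H^2_x(L^2_v)$: the coercivity $(Lh,h)\le-\mu\|P_1 h\|_{L^2_\sigma}^2$ furnishes microscopic dissipation at the scale $1/\eps^2$; the macroscopic part $P_0 g_{\eps}=(g_{\eps},\sqrt M)\sqrt M$ is controlled by combining the diffusion dissipation $\|\Tdx(g_{\eps},\sqrt M)\|$ (extracted from $v\cdot\Tdx g_{\eps}$ via a Kawashima-type multiplier) with the zeroth-order dissipation $\|(g_{\eps},\sqrt M)\|$ produced by $-v\sqrt M\cdot\Tdx\Psi_{\eps}$ and the Poisson identity $\Delta_x\Psi_{\eps}=(g_{\eps},\sqrt M)$, exactly reproducing the $+n$ term present in the DDP. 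Combined with the smallness from Theorem~\ref{existence}, this closes a Lyapunov inequality yielding $\|g_{\eps}(t)\|_{H^2_x(L^2_v)}\lesssim\delta_0\eps e^{-t/2}$ \emph{provided} the initial data $g_{\eps}(0)$ is of size $O(\eps)$.

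Finally, to handle the initial layer note that $g_{\eps}(0)=P_1 f_0-\eps f_1(0)$ is $O(1)$ in general. Split $g_{\eps}=h_{\eps}+r_{\eps}$, where $h_{\eps}$ solves the linear homogeneous problem with data $P_1 f_0$ and $r_{\eps}$ absorbs the $O(\eps)$ initial data together with all the forcings. The spectral analysis of \cite{Li3} yields a decomposition of the symbol $\widehat B_{\eps}(\xi)$ into a one-dimensional hydrodynamic eigenmode with eigenvalue $\lambda_{\eps}(\xi)=-|\xi|^2-1+O(\eps^2|\xi|^2)$ and eigenvector $\sqrt M+O(\eps)$, plus a complementary kinetic part whose spectrum lies in $\{\mathrm{Re}\,\lambda\le-a/\eps^2\}$ uniformly in $\xi$. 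Applied to $P_1 f_0$, the orthogonality $(P_1 f_0,\sqrt M)=0$ forces the hydrodynamic contribution to be $O(\eps e^{-t/2})$, while the kinetic part decays like $e^{-at/\eps^2}$, so $\|h_{\eps}(t)\|_{H^2_x(L^2_v)}\lesssim\delta_0(\eps e^{-t/2}+e^{-at/\eps^2})$. Combining this with the estimate for $r_{\eps}$ and the representation $\Tdx\Psi_{\eps}=\Tdx\Delta_x^{-1}(g_{\eps},\sqrt M)$ for the potential produces \eqref{limit0}. When $f_0=n_0\sqrt M\in N_0$ one has $P_1 f_0=0$, hence $h_{\eps}\equiv 0$ and only the $O(\eps e^{-t/2})$ term survives, giving \eqref{limit_1a}. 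The chief obstacle is closing the energy inequality uniformly in $\eps$: the singular $1/\eps$ prefactor on $\widetilde G$ forces the macroscopic component of the nonlinear error to cancel exactly after integration by parts against $P_0$, and the extra regularity hypothesis $\|\Tdv f_0\|_{H^3_x(L^2_v)}+\||v|f_0\|_{H^3_x(L^2_v)}\le\delta_0$ is precisely what is needed to bound $\|P_1 f_0\|_{L^2_\sigma}$ at the high derivative level required to propagate the $1/\eps^2$-scale dissipation.
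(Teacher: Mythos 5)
Your strategy (Chapman--Enskog corrector $\eps f_1$, remainder energy estimate, and a separate linear treatment of the layer) is genuinely different from the paper, which never forms a remainder equation: the paper keeps $f_{\eps}-n\sqrt M$ in Duhamel form \eqref{fe}--\eqref{ne}, applies the first and second order fluid approximations of the semigroup (Lemmas \ref{fl1} and \ref{fl2}, built on Theorem \ref{rate1}), and closes a bootstrap on the weighted sup-norm $Q_{\eps}(t)$ in \eqref{Qt}. Your route is in principle viable, but as written it has two genuine gaps.

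First, the remainder equation does not have the form you display. Substituting $f_{\eps}=n\sqrt M+\eps f_1+g_{\eps}$, the transport of the corrector produces the $O(1)$ term $v\cdot\Tdx f_1$; only its $P_0$-part is cancelled by the DDP equation \eqref{n_1}, while $P_1(v\cdot\Tdx f_1)$ survives as an $O(1)$ forcing (together with $O(1)$ microscopic pieces coming from $G$ acting on $n\sqrt M+\eps f_1$). So the source is not ``$\eps R(n,\Phi)$'' with $\|R\|\lesssim\delta_0e^{-t/2}$, and the claimed bound $\|g_{\eps}\|\lesssim\delta_0\eps e^{-t/2}$ does not follow from the equation as stated. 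The fix is standard --- either add a second corrector $\eps^2f_2$, or absorb the $O(1)$ microscopic forcing into the $\eps^{-2}\|P_1g_{\eps}\|^2_{L^2_\sigma}$ dissipation, which yields an effective $O(\eps)$ contribution --- but it must be done explicitly, since the whole $O(\eps)$ estimate hinges on it.

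Second, the layer splitting $g_{\eps}=h_{\eps}+r_{\eps}$ feeds $h_{\eps}$ back into the $1/\eps$-scaled nonlinearity: the $r_{\eps}$-equation contains terms of the type $\frac1{\eps}\Tdx\Phi_{\eps}\cdot\Tdv h_{\eps}$ and $\frac1{2\eps}(v\cdot\Tdx\Phi_{\eps})h_{\eps}$, plus field terms generated by $(h_{\eps},\sqrt M)$. To close the energy estimate for $r_{\eps}$ you therefore need decay of $\|\Tdv h_{\eps}\|$ and $\||v|h_{\eps}\|$ at the rate $\eps e^{-t/2}+e^{-at/\eps^2}$, but the spectral decomposition you invoke (Theorem \ref{rate1}) controls only the unweighted $\|\cdot\|_\xi$ norms; the weighted regularity assumed on $f_0$ does not by itself transfer to time-decay of the weighted norms of $e^{\frac{t}{\eps^2}B_{\eps}}P_1f_0$. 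This is precisely the difficulty the paper sidesteps: the weighted norms are propagated only for the full nonlinear solution $f_{\eps}$ (Lemma \ref{energy2}, which is where the hypotheses on $\Tdv f_0$ and $|v|f_0$ are used, namely to bound $\|G(f_{\eps})\|_{H^3_x(L^2_v)}$), while the layer appears only through the semigroup estimates of Lemmas \ref{fl1}--\ref{fl2} and, for well-prepared data, through Lemma \ref{S2a} giving \eqref{limit1a}. Relatedly, your statement that the non-hydrodynamic spectrum lies in $\{\mathrm{Re}\,\lambda\le-a/\eps^2\}$ conflates the $O(1)$ spectral gap of $B_{\eps}(\xi)$ with the $e^{-at/\eps^2}$ decay, which comes from the $t/\eps^2$ time scaling; and in the well-prepared case you still need to verify that no $e^{-at/\eps^2}$ re-enters through $r_{\eps}$, which is exactly the content of Lemma \ref{S2a} in the paper.
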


The results in Theorem~\ref{thm1.1} on the convergence rate  of diffusion limits of the VPFP system  is proved based on the spectral analysis \cite{Li3} and the ideas inspired by \cite{Ukai1,Ellis,Li4}. Indeed, we first develop a  non-local implicit function theorem to show the existence of the eigenvalue $\lambda_0(|\xi|,\eps)$ of the linear VPFP operator $B_{\eps}(\xi)$ defined by
\eqref{B3} for $\eps|\xi|$ small and establish the expansions of the eigenvalue $\lambda_0(|\xi|,\eps)$ and the corresponding eigenfunction $\psi_0(\xi,\eps)$ as (See Lemma \ref{eigen_1} and Theorem \ref{spect3})
\be \label{lam0}
\left\{\bln
\lambda_0(|\xi|,\eps)&= -\eps^2(1+|\xi|^2)+O(\eps^4(1+|\xi|^2)^2),\\
P_0\psi_0(\xi,\eps)&= \frac{|\xi|}{\sqrt{1+|\xi|^2}} \sqrt{M} +O(\eps |\xi|),\\
P_1\psi_0(\xi,\eps)&= -i\eps \sqrt{1+|\xi|^2}  \(v\cdot\frac{\xi}{|\xi|}\)\sqrt{M}+O(\eps^2(1+|\xi|^2)).
\eln\right.
\ee
Based on the spectral analysis of $B_{\eps}(\xi)$, we can decompose the semigroup $e^{ B_{\eps}(\xi)t/\eps^2}$ into the fluid part $S_1$ and the remainder part $S_2$, where the remainder part $S_2$ satisfies the decay rate $e^{-Ct/\eps^2}$, and the fluid part $S_1$ takes the form (See Theorem \ref{rate1})
\be
S_1(t,\xi,\eps)f= e^{\frac{t}{\eps^2}\lambda_0(|\xi|,\eps)}\(f,\overline{\psi_0(\xi,\eps)}\)_\xi\psi_0(\xi,\eps).\label{lam1}
\ee
Then by applying the expansion \eqref{lam0} to the fluid part \eqref{lam1}, we can establish the convergence and the optimal convergence rate of the solution to the linear VPFP system towards its first and second order fluid limits, where the first order fluid limit is the solution to the linear DDP system (See Lemmas \ref{fl1} and \ref{fl2}).

Finally, making use of the convergence results on the fluid limits of the linear VPFP system, we can  prove the convergence and establish the optimal convergence rate of  the nonlinear VPFP system towards the DDP system  after a careful and tedious computation. Moreover, we can obtain the precise estimation on the initial layer.

The rest of this paper will be organized as follows.
In Section~\ref{sect2}, we give the spectrum analysis of the linear operator related to the linearized VPFP system. In Section~\ref{sect3}, we  establish  the first and second order fluid approximations of the solution to the  linearized VPFP system based on the spectral analysis.
In Section~\ref{sect4}, we prove the convergence and establish the convergence rate of the global solution to the original nonlinear VPFP system towards the solution to the nonlinear DDP system.

\section{Spectral analysis}
\setcounter{equation}{0}
\label{sect2}

In this section, we are concerned with the spectral analysis of the operator $B_{\eps}(\xi)$ defined by
\eqref{B3}, which will be applied to study diffusion limit of the solution to the VPFP system \eqref{VPFP4}--\eqref{VPFP6}.

From the system \eqref{VPFP4}--\eqref{VPFP6}, we have the following linearized VPFP system:
\be\label{LVPFP1}
\left\{\bln
&\eps^2\dt f_{\eps}=B_{\eps}f_{\eps},\quad t>0,\\
 &f_{\eps}(0,x,v)=f_0(x,v),
 \eln\right.
\ee
where
\be
B_{\eps}f=Lf-\eps v\cdot\Tdx f-\eps v \cdot \Tdx(-\Delta_x)^{-1}P_0f.
\ee
Take Fourier transform to \eqref{LVPFP1} in $x$ to get
\be\label{LVPFP5}
\left\{\bln
 &\eps^2\dt \hat{f}_{\eps}= B_{\eps}(\xi)\hat f_{\eps},\quad t>0,\\
  &\hat{f}_{\eps}(0,\xi,v)=\hat{f}_0(\xi,v),
   \eln\right.
\ee
where 
\be
B_{\eps}(\xi)=L-i\eps v\cdot\xi-i\eps \frac{ v\cdot\xi}{|\xi|^2}P_{0},\quad \xi\ne 0. \label{B3}
\ee

Following \cite{Yu}, we decompose $L$ as follows:
\be \label{L_1}
\left\{\bln
 Lf&=-Af+Kf,\\
Af&=-\Delta_vf+\frac{|v|^2}4f-\frac321_{\{|v|\ge R\}}f,\\
Kf&=\frac321_{\{|v|\le R\}}f,
\eln\right.
\ee
where $R > 0$ is chosen to be large enough and $1_{\{|v|\le R\}}$ is the indicator function of the
domain $|v| \le R$. The operators $A$   and $K $ are  self-adjoint on $L^2$, and $K$ is a $A-$compact operator, that is, for any
sequences both $\{u_n\}$ and $\{A u_n\}$ bounded in $L^2_v$,  $\{Ku_n\}$ contains a
convergent subsequence  in $L^2_v$ (cf. Lemma 2.3 in \cite{Yu}).
For $R>0$ sufficiently large, there are two constants $\nu_0,\nu_1>0$ such that
\be
(Af,f)\ge \nu_1\|f\|^2_{L^2_\sigma}\ge \nu_0\|f\|^2. \label{L_3}
\ee
Also, we define
\bq
  A_{\eps}(\xi)=-A- i\eps (v\cdot\xi).  \label{Cxi}
 \eq

Introduce a weighted Hilbert space $L^2_\xi(\R^3)$ for $\xi\ne 0$
as
$$
 L^2_\xi(\R^3)=\{f\in L^2(\R^3)\,|\,\|f\|_\xi=\sqrt{(f,f)_\xi}<\infty\},
$$
with the inner product defined by
$$
 (f,g)_\xi=(f,g)+\frac1{|\xi|^2}(P_{0} f,P_{0} g).
$$

Since $P_{0}$ is a self-adjoint projection operator, it follows that
 $(P_{0} f,P_{0} g)=(P_{0} f, g)=( f,P_{0} g)$ and hence
 \bq (f,g)_\xi=(f,g+\frac1{|\xi|^2}P_{0}g)=(f+\frac1{|\xi|^2}P_{0}f,g).\label{C_1a}\eq
By
\eqref{C_1a}, we have for any $f,g\in L^2_\xi(\R^3_v)\cap D(B_{\eps}(\xi))$,
 \be
 (B_{\eps}(\xi)f,g)_\xi=(B_{\eps}(\xi) f,g+\frac1{|\xi|^2}P_{0} g)=(f,B_{\eps}(-\xi)g)_\xi. \label{L_7}
\ee
Moreover,  $B_{\eps}(\xi)$ is a dissipate operator in $L^2_\xi(\R^3)$:
 \be
 {\rm Re}(B_{\eps}(\xi)f,f)_\xi=(Lf,f)\le 0. \label{L_8}
\ee
We can regard $B_{\eps}(\xi)$ as a linear operator from the space $L^2_\xi(\R^3)$ to itself because
$$
 \|f\|^2\le \|f\|^2_\xi\le(1+|\xi|^{-2})\|f\|^2,\quad \xi\ne 0.
$$

The  $\sigma(A)$ denotes the spectrum of the operator $A$. The discrete spectrum
of $A$, denoted by $\sigma_d(A)$, is the set of all isolated eigenvalues with finite multiplicity.
The essential spectrum of $A$, $\sigma_{ess}(A)$, is the set $\sigma(A)\setminus \sigma_d
(A)$. We denote $\rho(A)$ to be the resolvent set of the operator $A.$

By \eqref{L_7},  \eqref{L_8}, \eqref{L_1} and \eqref{L_3}, we have the following two lemmas.

\begin{lem}\label{SG_1}
(1) The operator $B_{\eps}(\xi)$ generates a strongly continuous contraction semigroup on
$L^2_\xi(\R^3)$, which satisfies \bq
\|e^{tB_{\eps}(\xi)}f\|_\xi\le\|f\|_\xi, \quad \forall\ t>0,\,\,f\in
L^2_\xi(\R^3_v). \eq

(2) The operator $A_{\eps}(\xi)$ generates a strongly continuous contraction semigroup on
$L^2 (\R^3)$, which satisfies \bq
\|e^{tA_{\eps}(\xi)}f\| \le e^{-\nu_0t}\|f\| , \quad \forall\ t>0,\,\,f\in
L^2 (\R^3_v). \eq
\end{lem}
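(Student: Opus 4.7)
The plan is to apply Phillips' theorem to each operator, feeding it the dissipation estimates already on the table. Part (2) is the easier one: on the Schwartz class $\mathcal{S}(\R^3_v)$, which is dense in $L^2(\R^3_v)$, the self-adjointness of $A$ together with the fact that $v\cdot\xi$ is real-valued (so $-i\eps v\cdot\xi$ is formally skew-Hermitian) immediately give $\mathrm{Re}(A_\eps(\xi)f,f) = -(Af,f) \le -\nu_0\|f\|^2$ by \eqref{L_3}. The formal adjoint in $L^2(\R^3_v)$ is $A_\eps(-\xi)$, which satisfies the same bound. Phillips' theorem then furnishes an m-dissipative closed realization of $A_\eps(\xi)+\nu_0 I$, i.e.\ a $C_0$-semigroup of contractions generated by the shifted operator; undoing the shift yields $\|e^{tA_\eps(\xi)}f\|\le e^{-\nu_0 t}\|f\|$. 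The same bound can alternatively be read off by differentiating $\|e^{tA_\eps(\xi)}f\|^2$ in $t$ and applying Gronwall.

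For part (1) I would run the same scheme in the weighted Hilbert space $L^2_\xi$. The estimate \eqref{L_8} supplies dissipativity directly in the $(\cdot,\cdot)_\xi$ inner product, namely $\mathrm{Re}(B_\eps(\xi)f,f)_\xi=(Lf,f)\le 0$, while the adjoint relation \eqref{L_7} identifies the formal adjoint as $B_\eps(-\xi)$, which inherits the same bound. Density of $\mathcal{S}(\R^3_v)$ in $L^2_\xi$ is automatic since for fixed $\xi\ne 0$ the norms $\|\cdot\|$ and $\|\cdot\|_\xi$ are equivalent. Phillips' theorem then produces the contraction $C_0$-semigroup on $L^2_\xi$, yielding $\|e^{tB_\eps(\xi)}f\|_\xi\le \|f\|_\xi$.

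The main technical point I anticipate is verifying the adjoint identity \eqref{L_7} on a common dense core and checking that the non-local term $-i\eps\frac{v\cdot\xi}{|\xi|^2}P_0$ neither destroys dissipativity nor misidentifies the adjoint. The decisive simplification is $LP_0=0$, which eliminates the cross contribution $(Lf,P_0 f)$ when expanding $(B_\eps(\xi)f,f)_\xi$ via \eqref{C_1a}; the remaining streaming terms, including the nonlocal one, assemble into a purely imaginary part of the sesquilinear form, so that only $(Lf,f)$ survives in the real part, exactly as stated in \eqref{L_8}. Once that identity is in hand, the semigroup conclusions in both parts are immediate consequences of the Lumer--Phillips / Phillips framework.
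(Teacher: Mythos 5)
Your proposal is correct and follows essentially the same route as the paper: the paper also invokes the Lumer--Phillips/Phillips framework, using the dissipativity \eqref{L_8} of both $B_{\eps}(\xi)$ and its adjoint $B_{\eps}(\xi)^*=B_{\eps}(-\xi)$ (via \eqref{L_7}) on $L^2_\xi$ for part (1), and the coercivity \eqref{L_3} with the shift by $\nu_0$ for part (2). No substantive difference in approach.
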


\begin{proof}
Since  the operator $B_{\eps}(\xi)$ is a densely defined closed operator on  $L^2_\xi(\R^3)$, and both $B_{\eps}(\xi)$ and $B_{\eps}(\xi)^*=B_{\eps}(-\xi)$ are dissipative on $L^2_\xi(\R^3)$ by \eqref{L_8}, it follows  that $B_{\eps}(\xi)$ generates a strongly continuous contraction
semigroup on $L^2_\xi(\R^3)$. This proves (1). Similarly, we can prove (2) by \eqref{L_3}.
\end{proof}

\begin{lem}\label{Egn}
The following conditions hold for all $\xi\ne 0$ and $\eps\in (0,1)$.
 \begin{enumerate}
\item[(1)]
$\sigma_{ess}(B_{\eps}(\xi))\subset \{\lambda\in \mathbb{C}\,|\, {\rm Re}\lambda\le -\nu_0\}$ and $\sigma(B_{\eps}(\xi))\cap \{\lambda\in \mathbb{C}\,|\, -\nu_0<{\rm Re}\lambda\le 0\}\subset \sigma_{d}(B_{\eps}(\xi))$.
\item[(2)]
 If $\lambda$ is an eigenvalue of $B_{\eps}(\xi)$, then ${\rm Re}\lambda<0$ for any $\eps\xi\ne 0$ and $ \lambda=0$ iff $\eps\xi= 0$.
 \end{enumerate}
\end{lem}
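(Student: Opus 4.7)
The plan is to prove part (1) via Weyl's theorem on the stability of the essential spectrum under relatively compact perturbations, and to prove part (2) by combining the dissipation identity \eqref{L_8} with the coercivity \eqref{L_4} of the linearized collision operator $L$.

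For part (1), I would decompose
\bmas
B_\eps(\xi) = A_\eps(\xi) + K - i\eps\frac{v\cdot\xi}{|\xi|^2}P_0.
\emas
The estimate \eqref{L_3} yields ${\rm Re}(A_\eps(\xi)f,f) = -(Af,f) \le -\nu_0\|f\|^2$, so $\sigma(A_\eps(\xi)) \subset \{\lambda \in \mathbb{C} : {\rm Re}\lambda \le -\nu_0\}$. Since $K$ is $A$-compact and multiplication by $-i\eps(v\cdot\xi)$ is $A$-bounded, $K$ is also $A_\eps(\xi)$-compact. The term $-i\eps\frac{v\cdot\xi}{|\xi|^2}P_0$ is a rank-one operator (its range is spanned by $(v\cdot\xi)\sqrt{M}$ in $v$), hence bounded and compact, so in particular $A_\eps(\xi)$-compact. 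Weyl's theorem then gives $\sigma_{ess}(B_\eps(\xi)) = \sigma_{ess}(A_\eps(\xi)) \subset \{{\rm Re}\lambda \le -\nu_0\}$, and the second assertion of (1) follows because any $\lambda \in \sigma(B_\eps(\xi))$ in the strip $-\nu_0 < {\rm Re}\lambda \le 0$ lies outside the essential spectrum, hence belongs to $\sigma_d(B_\eps(\xi))$.

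For part (2), the dissipativity \eqref{L_8} immediately gives ${\rm Re}\lambda \le 0$. Suppose ${\rm Re}\lambda = 0$ with eigenfunction $f \ne 0$. Taking the real part of $(B_\eps(\xi)f,f)_\xi = \lambda\|f\|_\xi^2$ and invoking \eqref{L_4} forces $\|P_1 f\|^2 \le -(Lf,f) = 0$, so $f = c(\xi)\sqrt{M} \in N_0$ with $c(\xi) \ne 0$. Substituting back into $B_\eps(\xi)f = \lambda f$ and using $Lf = 0$ together with $P_0 f = f$, I would obtain
\bmas
 -i\eps c(\xi)\Bigl(1 + \frac{1}{|\xi|^2}\Bigr)(v\cdot\xi)\sqrt{M} = \lambda c(\xi)\sqrt{M}.
\emas
The left-hand side is an odd function of $v$ linear in $v\cdot\xi$ whenever $\eps\xi \ne 0$, while the right-hand side has no $v$-dependence; this forces $\eps\xi = 0$ and $\lambda = 0$. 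Under the hypothesis $\eps\xi \ne 0$ this is a contradiction, so ${\rm Re}\lambda < 0$. The converse ($\lambda = 0$ when $\eps\xi = 0$) is trivial since $B_\eps(\xi)$ then reduces to $L$, which has $\sqrt{M}$ as a null eigenfunction.

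The main technical point is to rigorously justify the $A_\eps(\xi)$-compactness of the perturbation, since $K$ is only $A$-compact (not compact on $L^2_v$). I would handle this via a graph-norm argument: for any bounded sequence $\{u_n\} \subset L^2_\xi(\R^3_v)$ and $\lambda \in \rho(A_\eps(\xi))$, the sequence $\{(A_\eps(\xi) - \lambda)^{-1} u_n\}$ is uniformly bounded in the graph norm of $A$ (because the imaginary multiplication perturbation only shifts the spectrum along the imaginary axis and does not change $D(A)$), so $A$-compactness of $K$ extracts a convergent subsequence for the $K$-term, while the rank-one $P_0$-term yields convergence trivially. Once this is in place, both parts of the lemma follow from the standard spectral-theoretic consequences of the dissipativity and coercivity structure already recorded in \eqref{L_3}, \eqref{L_4}, and \eqref{L_8}.
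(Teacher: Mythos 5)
Your proposal is correct, and for part (1) it is essentially the paper's own argument: the contraction bound coming from \eqref{L_3} places $\sigma(A_\eps(\xi))$ in $\{\mathrm{Re}\,\lambda\le-\nu_0\}$, and $B_\eps(\xi)$ is written as $A_\eps(\xi)$ plus a relatively compact perturbation ($K$ being $A$-compact, hence $A_\eps(\xi)$-compact since $D(A_\eps(\xi))=D(A)$ with equivalent graph norms, and the $(v\cdot\xi)|\xi|^{-2}P_0$-term being rank one), so the essential spectrum is unchanged and everything in the strip $-\nu_0<\mathrm{Re}\,\lambda\le 0$ is discrete; the paper uses the identical decomposition, and your graph-norm justification of the relative compactness is a detail the paper leaves implicit. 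The only real divergence is in part (2): the paper does not prove it at all, referring instead to Proposition 2.2.8 in \cite{Ukai}, whereas you supply a self-contained argument — dissipativity \eqref{L_8} gives $\mathrm{Re}\,\lambda\le0$, coercivity \eqref{L_4} forces any eigenfunction with $\mathrm{Re}\,\lambda=0$ into $N_0$, and substituting $f=c\sqrt M$ into $B_\eps(\xi)f=\lambda f$ leaves the odd-in-$v$ term $-i\eps c\,(1+|\xi|^{-2})(v\cdot\xi)\sqrt M$ equated with $\lambda c\sqrt M$, which after pairing with $\sqrt M$ forces $\lambda=0$ and then $\eps\xi=0$, contradicting $\eps\xi\ne0$. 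This is exactly the standard mechanism behind the cited proposition, so what you gain is self-containedness rather than a new method, and your computation is correct (note that under the lemma's hypotheses $\xi\ne0$, $\eps\in(0,1)$, the ``iff'' clause reduces to the statement that $0$ is never an eigenvalue, which your argument delivers).
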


\begin{proof}
By Lemma \ref{SG_1}, $\lambda- A_{\eps}(\xi)$ is invertible for ${\rm
Re}\lambda>-\nu_0$ and hence $\sigma (A_{\eps}(\xi))\subset \{\lambda\in \mathbb{C}\,|\, {\rm Re}\lambda\le -\nu_0\}$. Since $K$ is $A_\eps(\xi)-$compact and $i(v\cdot\xi)|\xi|^{-2}P_0$ is compact, namely, $ B_{\eps}(\xi)$ is a compact perturbation of $ A_{\eps}(\xi)$, it follows that $ \sigma_{ess}(B_{\eps}(\xi))=\sigma_{ess}(A_{\eps}(\xi))$   and $\sigma (B_{\eps}(\xi))$ in the domain ${\rm Re}\lambda>-\nu_0$ consists of discrete eigenvalues  with possible accumulation
points only on the line ${\rm Re}\lambda= -\nu_0$. This proves (1). By a similar as Proposition 2.2.8 in \cite{Ukai}, we can prove (2) and  the detail is omitted for simplicity.
\end{proof}

Now denote by $T$ a linear operator on $L^2(\R^3_v)$ or
$L^2_\xi(\R^3_v)$, and we define the corresponding norms of $T$ by
$$
 \|T\|=\sup_{\|f\|=1}\|Tf\|,\quad
 \|T\|_\xi=\sup_{\|f\|_\xi=1}\|Tf\|_\xi.
$$
Obviously,
 \be
(1+|\xi|^{-2})^{-1/2}\|T\|\le \|T\|_\xi\le (1+|\xi|^{-2})^{1/2}\|T\|.\label{eee}
 \ee

First, we consider the spectrum and resolvent sets of $B_{\eps}(\xi)$ for $\eps|\xi|>r_0$ with $r_0>0$ a constant. To this end, we  decompose  $B_{\eps}(\xi)$ into
\bma
\lambda-B_{\eps}(\xi)&=\lambda-A_{\eps}(\xi)-K+i\eps\frac{ v\cdot\xi}{|\xi|^2}P_{0}\nnm\\
&=\(I-K(\lambda-A_{\eps}(\xi))^{-1}+i\eps\frac{v\cdot\xi}{|\xi|^2}P_{0}(\lambda-A_{\eps}(\xi))^{-1}\)(\lambda-A_{\eps}(\xi)).\label{B_d}
\ema
Then, we have the estimates on the right hand terms of \eqref{B_d} as follows.

\begin{lem}[\cite{Yu,Li3}]\label{LP03}
 There exists a constant  $C>0$ so that the following holds
\begin{enumerate}
\item For any $\delta>0$, if ${\rm Re}\lambda\ge -\nu_0+\delta$, we have
 \be
  \|K(\lambda-A_{\eps}(\xi))^{-1}\| \to 0, \quad {\rm as}\,\,\ |{\rm Im}\lambda|+\eps|\xi|\to \infty. \label{T_7}
 \ee
 \item  For any $\delta>0$ and $ r_0>0$,  if ${\rm Re}\lambda\ge -\nu_0+\delta$ and $|\xi|\ge r_0$, we have
 \be
    \|(v\cdot\xi)|\xi|^{-2}P_{0}(\lambda-A_{\eps}(\xi))^{-1}\|
 \leq C(\delta^{-1}+1)(r_0^{-1}+1)(|\xi|+|\lambda|)^{-1}. \label{L_9}
  \ee
\end{enumerate}
\end{lem}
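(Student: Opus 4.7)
The plan is to treat the two bounds separately, both exploiting the decomposition $A_\eps(\xi) = -A - i\eps v\cdot\xi$, the spectral gap $(Af,f)\ge\nu_0\|f\|^2$ from \eqref{L_3}, and the compact support of the multiplier $\frac{3}{2}1_{\{|v|\le R\}}$ defining $K$ in \eqref{L_1}.

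For part (1), I would use the resolvent representation $(\lambda - A_\eps(\xi))^{-1} = \int_0^\infty e^{-\lambda t} e^{tA_\eps(\xi)}\,dt$, valid on the half-plane ${\rm Re}\lambda > -\nu_0$ by Lemma \ref{SG_1}. Decay as $|{\rm Im}\lambda|\to\infty$ is extracted by integrating by parts in $t$, which produces a factor of $({\rm Im}\lambda)^{-1}$; decay as $\eps|\xi|\to\infty$ comes from the oscillatory factor $e^{-i\eps t v\cdot\xi}$ tested against the compactly supported velocity cutoff in $K$, via a Riemann--Lebesgue type argument. A cleaner route is to note that $K$ is $A$-compact, so $K(\lambda - A_\eps(\xi))^{-1}$ is a compact operator for each $\lambda, \xi$, and it suffices to verify strong convergence to $0$ on the dense subspace of Schwartz functions; compactness then upgrades the strong convergence to norm convergence.

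For part (2), I would use the macro-micro decomposition. Setting $u = (\lambda - A_\eps(\xi))^{-1} f$ and $\alpha = (u,\sqrt M)$, the identity $\|(v\cdot\xi)|\xi|^{-2}\sqrt M\| = |\xi|^{-1}$ reduces the target bound to $|\alpha| \le C(\delta^{-1}+1)(r_0^{-1}+1)|\xi|(|\xi|+|\lambda|)^{-1}\|f\|$. Projecting the equation $(\lambda + A + i\eps v\cdot\xi)u = f$ onto $\sqrt M$, and using $A\sqrt M = K\sqrt M$ (a consequence of $L\sqrt M = 0$ together with $L = -A+K$) and the odd-in-$v$ cancellation $((v\cdot\xi)\sqrt M,\sqrt M)=0$, yields
\[
(\lambda + \mu(R))\alpha = (f,\sqrt M) - (P_1 u, K\sqrt M) - i\eps(P_1 u,(v\cdot\xi)\sqrt M),
\]
where $\mu(R) := (K\sqrt M,\sqrt M)>0$. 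Taking the real part of $((\lambda - A_\eps(\xi))u,u) = (f,u)$ gives $\|u\|\le \delta^{-1}\|f\|$ and $(Au,u)\le C_\delta\|f\|^2$; combined with $(Au,u)\ge\nu_1\|vu\|^2$ from \eqref{L_3}, this produces $\|P_1 u\|\le C_\delta\|f\|$ and $\|(v\cdot\xi)P_1 u\|\le C_\delta|\xi|\|f\|$. Hence $|\lambda+\mu(R)|\,|\alpha|\le C_\delta(1+\eps|\xi|)\|f\|$. A case split on whether $|\lambda|\ge 2\mu(R)$ or not ensures $|\lambda+\mu(R)|\ge c(|\lambda|+\mu(R))$; further splitting into $|\lambda|\le|\xi|$ versus $|\lambda|>|\xi|$ and using $|\xi|\ge r_0$ to trade factors of $1$ for $r_0^{-1}|\xi|$ yields the claimed bound.

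The main obstacle is part (1): quantitative norm decay (rather than merely strong convergence) along the two-parameter limit $|{\rm Im}\lambda|+\eps|\xi|\to\infty$ requires careful use of the interplay between oscillation in $t$, oscillation in $v$, and the $v$-support of $K$. The compactness shortcut avoids explicit oscillatory estimates but transfers the effort to verifying the strong limit on a dense set. Part (2), while calculation-heavy in its case analysis, is a routine macro-micro projection argument.
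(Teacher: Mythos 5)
A preliminary remark: the paper never proves Lemma \ref{LP03} — it is quoted from \cite{Yu,Li3} — so your proposal has to stand on its own. Part (2) essentially does: the reduction of \eqref{L_9} to $|\alpha|\le C(\delta^{-1}+1)(r_0^{-1}+1)\,|\xi|(|\lambda|+|\xi|)^{-1}\|f\|$ for $\alpha=(u,\sqrt M)$, the projected identity based on $A\sqrt M=K\sqrt M$ and $((v\cdot\xi)\sqrt M,\sqrt M)=0$, the energy bounds $\|u\|\le\delta^{-1}\|f\|$ and $\|\nabla_vu\|+\|vu\|\le C_\delta\|f\|$ coming from \eqref{L_3}, and the case analysis all work. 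Two points you leave implicit should be said explicitly: for $|\lambda|\le|\xi|$ the trivial bound $|\alpha|\le\|u\|\le\delta^{-1}\|f\|$ already gives the claim, and in the regime $|\lambda|<2\mu(R)$ the needed lower bound $|\lambda+\mu(R)|\ge\mathrm{Re}\lambda+\mu(R)\ge\delta$ uses the observation $\nu_0\le(A\sqrt M,\sqrt M)=(K\sqrt M,\sqrt M)=\mu(R)$.

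Part (1) is where there is a genuine gap. Your ``cleaner route'' rests on a false principle: compactness of each member of an operator family does not upgrade strong convergence to norm convergence (rank-one projections onto an orthonormal sequence are compact, strongly null, and have norm one). The correct statement — $\|TC\|\to0$ when $T$ runs through a uniformly bounded, strongly null family and $C$ is a single \emph{fixed} compact operator — is not applicable here: $K=\tfrac32 1_{\{|v|\le R\}}$ is not compact on $L^2(\R^3_v)$, only $A$-compact, and the factorization $K(\lambda-A_{\eps}(\xi))^{-1}=[K(A+1)^{-1}]\,[(A+1)(\lambda-A_{\eps}(\xi))^{-1}]$ puts the fixed compact factor on the wrong side, with a second factor that is neither strongly null nor uniformly bounded in the joint limit. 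The Laplace-transform route also does not deliver \eqref{T_7}: integrating by parts in $t$ produces $A_{\eps}(\xi)e^{tA_{\eps}(\xi)}f$, which is not controlled by $\|f\|$ uniformly in $t$ (at best it behaves like $t^{-1}\|f\|$ near $t=0$), and in any case it gives nothing in the regime where $\mathrm{Im}\lambda$ stays bounded while $\eps|\xi|\to\infty$; there the phase $e^{-i\eps t v\cdot\xi}$ cannot be factored out of $e^{tA_{\eps}(\xi)}$ because $v\cdot\xi$ does not commute with the velocity diffusion in $A$, so a Riemann--Lebesgue argument does not apply as stated. What \eqref{T_7} requires is a quantitative resolvent estimate combining the compact $v$-support of $K$ with the dissipation: for $u=(\lambda-A_{\eps}(\xi))^{-1}f$ one has $\|u\|_{H^1_v}+\|vu\|\le C_\delta\|f\|$; testing the equation against $\chi^2(\mathrm{Im}\lambda+\eps v\cdot\xi)\bar u$, with $\chi$ a cutoff of $\{|v|\le 2R\}$, gives $\int\chi^2(\mathrm{Im}\lambda+\eps v\cdot\xi)^2|u|^2dv\le C_\delta(|\mathrm{Im}\lambda|+\eps|\xi|)\|f\|^2$, which controls $u$ on $\{|v|\le R,\ |\mathrm{Im}\lambda+\eps v\cdot\xi|\ge h\}$, while the remaining slab, of measure $O(R^2h/(\eps|\xi|))$, is handled through $H^1_v\hookrightarrow L^6_v$; choosing $h$ suitably in the regimes $|\mathrm{Im}\lambda|\gtrsim\eps|\xi|$ and $\eps|\xi|\gtrsim|\mathrm{Im}\lambda|$ yields the uniform norm decay. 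Without an argument of this quantitative type (which is what the cited references supply), part (1) of your proposal remains unproved.
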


By \eqref{B_d} and Lemma \ref{LP03}, we have the spectral gap
of the operator $B_{\eps}(\xi)$ for $\eps|\xi|>r_0$.

\begin{lem}[Spectral gap]\label{LP01}
Fix $\eps\in (0,1)$. For any $r_0>0$, there
exists $\alpha =\alpha(r_0)>0$ such that for  $ \eps|\xi|\ge r_0$,
\bq \sigma(B_{\eps}(\xi))\subset\{\lambda\in\mathbb{C}\,|\, \mathrm{Re}\lambda \leq-\alpha\} .\label{gap}\eq
\end{lem}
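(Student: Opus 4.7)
The plan is to combine the factorization \eqref{B_d} with the estimates of Lemma \ref{LP03} to invert $\lambda-B_\eps(\xi)$ in a thin strip $\Omega_{\alpha_0}:=\{\mathrm{Re}\lambda\ge-\alpha_0\}$ with $\alpha_0\in(0,\nu_0)$, whenever the parameters $(\lambda,\eps,\xi)$ lie outside a suitably chosen bounded region; the remaining compact region is then handled by a continuity/compactness argument based on Lemma \ref{Egn}.

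For the unbounded region, Lemma \ref{SG_1}(2) guarantees that $\lambda-A_\eps(\xi)$ is invertible on $L^2_v$ for $\lambda\in\Omega_{\alpha_0}$, so by \eqref{B_d} the invertibility of $\lambda-B_\eps(\xi)$ on $L^2_\xi$ is equivalent to that of
$$
I-T(\lambda,\eps,\xi),\qquad T:=K(\lambda-A_\eps(\xi))^{-1}-i\eps\tfrac{v\cdot\xi}{|\xi|^2}P_{0}(\lambda-A_\eps(\xi))^{-1}.
$$
Since $\eps\in(0,1)$ together with $\eps|\xi|\ge r_0$ forces $|\xi|\ge r_0$, Lemma \ref{LP03}(2) gives $\|\eps(v\cdot\xi)|\xi|^{-2}P_{0}(\lambda-A_\eps(\xi))^{-1}\|\le C\eps(|\xi|+|\lambda|)^{-1}$, which is $O(1/M_0)$ as soon as $|\mathrm{Im}\lambda|+\eps|\xi|\ge M_0$ (using $|\xi|\ge\eps|\xi|$). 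Combined with Lemma \ref{LP03}(1) for the first summand, and with the $L^2$-to-$L^2_\xi$ equivalence \eqref{eee}, I obtain $\|T\|_\xi<1/2$ on $\Omega_{\alpha_0}\cap\{|\mathrm{Im}\lambda|+\eps|\xi|\ge M_0\}$ for some $M_0=M_0(r_0,\alpha_0)$; a Neumann series then places $\lambda\in\rho(B_\eps(\xi))$.

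It remains to rule out $\sigma(B_\eps(\xi))$ near the imaginary axis on the compact region
$$
\mathcal{R}=\{-\alpha_0\le\mathrm{Re}\lambda\le 0,\ |\mathrm{Im}\lambda|\le M_0,\ r_0\le\eps|\xi|\le M_0\}.
$$
Reparameterizing by $\eta=\eps\xi$ recasts $B_\eps(\xi)=L-iv\cdot\eta-i\eps^2(v\cdot\eta)|\eta|^{-2}P_{0}$ as a norm-continuous family over the compact parameter set $(\eps,\eta)\in[0,1]\times\{r_0\le|\eta|\le M_0\}$. By Lemma \ref{Egn}(1) the spectrum in $\Omega_{\alpha_0}$ is discrete (since $K$ is $A$-compact and the Poisson piece is finite rank), and Lemma \ref{Egn}(2) forces every such eigenvalue to have strictly negative real part. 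If no uniform gap existed, I could pick $\lambda_n\in\sigma(B_{\eps_n}(\xi_n))$ with $\mathrm{Re}\lambda_n\to 0$ and $(\eps_n,\eta_n)\to(\eps_*,\eta_*)$ with $|\eta_*|\ge r_0$; upper semi-continuity of the discrete spectrum under norm-continuous compact perturbations would force $\lambda_*=\lim\lambda_n$ to be an eigenvalue of the limit operator with $\mathrm{Re}\lambda_*=0$, contradicting strict negativity.

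The main obstacle is precisely this last step, specifically the corner $\eps_*=0$ of the compact parameter set: the corresponding limit operator $L-iv\cdot\eta_*$ is not literally covered by Lemma \ref{Egn}(2), but its proof (Proposition 2.2.8 of \cite{Ukai}) carries over unchanged since the Poisson coupling plays no role in deriving strict negativity there. The upper semi-continuity itself is standard Kato perturbation theory, once one records that the essential spectrum stays to the left of $\mathrm{Re}\lambda=-\nu_0$ uniformly in the parameters, and that \eqref{eee} keeps the $L^2$ and $L^2_\xi$ norms uniformly comparable as $|\xi|$ ranges over $[r_0/\eps_n,\,M_0/\eps_n]$.
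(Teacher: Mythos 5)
Your proposal is correct in substance, but its second half follows a genuinely different route from the paper. The first half --- using the factorization \eqref{B_d}, the estimates of Lemma \ref{LP03} and the norm equivalence \eqref{eee} to invert $\lambda-B_{\eps}(\xi)$ by a Neumann series once $|\mathrm{Im}\lambda|+\eps|\xi|$ is large --- is exactly the paper's argument. For the remaining compact region, however, the paper keeps $\eps$ fixed (as the statement permits) and argues concretely with eigenfunctions: from $\mathrm{Re}\lambda_n\to 0$ and the coercivity \eqref{L_4} one gets $\|P_1f_n\|\to 0$, compactness of $P_0$ yields a strongly convergent subsequence of the normalized eigenfunctions, and the limit is an eigenfunction of $B_{\eps}(\xi_0)$ with purely imaginary eigenvalue, contradicting Lemma \ref{Egn}(2); no $\eps\to 0$ limit ever arises. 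You instead compactify the parameters as $(\eps,\eta)=(\eps,\eps\xi)\in[0,1]\times\{r_0\le|\eta|\le M_0\}$ and invoke semicontinuity of the discrete spectrum, which buys a gap $\alpha$ uniform in $\eps$ (stronger than needed for the fixed-$\eps$ statement, and consistent with the notation $\alpha(r_0)$), at the price of the corner $\eps_*=0$, which you correctly dispose of by the Ukai-type energy argument for $L-iv\cdot\eta_*$, where the Poisson coupling indeed drops out. Two repairs are needed to make your version rigorous: the family is \emph{not} norm-continuous in $(\eps,\eta)$, since $v\cdot(\eta-\eta')$ is an unbounded multiplication operator; the continuity must be stated as norm-resolvent (generalized, gap-topology) convergence, which does hold here because $\|(v\cdot e)f\|\le \|f\|_{L^2_\sigma}$ makes the perturbation relatively bounded with relative bound $0$, and it is this resolvent convergence (not the uniform location of the essential spectrum, whose role is only to ensure discreteness of the spectrum in the strip) that delivers the semicontinuity. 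With these standard adjustments your argument closes and is in fact slightly stronger than the paper's; the paper's eigenfunction/compactness argument is more elementary and sidesteps both the abstract perturbation theory and the $\eps=0$ limit operator.
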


\begin{proof}
Let $\lambda\in \sigma(B_{\eps}(\xi))\cap\{\lambda\in\mathbb{C}\,|\, \mathrm{Re}\lambda\geq -\nu_0+\delta\}$ with  $\delta>0$. We first show that  $\sup_{\eps|\xi|\ge r_0}|{\rm
Im}\lambda|<+\infty$. By \eqref{eee}, \eqref{T_7} and \eqref{L_9}, there exists a large constant
$r_1=r_1(\delta)>0$ such that for
$\mathrm{Re}\lambda\geq -\nu_0+\delta$ and $\eps|\xi|\geq r_1$,
\bq
\|K(\lambda-A_{\eps}(\xi))^{-1}\|_{\xi}\leq1/4,\quad
\|\eps(v\cdot\xi)|\xi|^{-2}P_0(\lambda-A_{\eps}(\xi))^{-1}\|_{\xi}\leq1/4.\label{bound}
\eq
This implies that
$I+K(\lambda-A_{\eps}(\xi))^{-1}+i\eps(v\cdot\xi)|\xi|^{-2}P_0(\lambda-A_{\eps}(\xi))^{-1}$
is invertible on $L^2_{\xi}(\R^3_v)$, which together with
\eqref{B_d} yield that $\lambda-B_{\eps}(\xi)$ is also invertible on $L^2_{\xi}(\R^3_v)$ and
satisfies
\bq
(\lambda-B_{\eps}(\xi))^{-1}=(\lambda-A_{\eps}(\xi))^{-1}\(I-K(\lambda-A_{\eps}(\xi))^{-1}+i\eps\frac{v\cdot\xi}{|\xi|^2}P_0(\lambda-A_{\eps}(\xi))^{-1}\)^{-1} . \label{E_6}
\eq
Therefore
$\{\lambda\in\mathbb{C}\,|\,\mathrm{Re}\lambda\ge
-\nu_0+\delta\}\subset \rho(B_{\eps}(\xi))$
for $\eps|\xi|\ge r_1$.

As for $r_0\le\eps|\xi|\le r_1$, by \eqref{T_7} and \eqref{L_9} there exists
$\beta=\beta(r_0,r_1,\delta)>0$ such that if $\mathrm{Re}\lambda\geq -\nu_0+\delta$, $|\mathrm{Im}\lambda|>\beta$ and $\eps|\xi|\in [r_0,r_1]$, then
\eqref{bound} still holds and thus $\lambda-B_{\eps}(\xi)$ is invertible.
This implies that $\{\lambda\in\mathbb{C}\,|\,\mathrm{Re}\lambda\ge
-\nu_0+\delta, |\mathrm{Im}\lambda|>\beta\}\subset \rho(B_{\eps}(\xi))$ for
$r_0\le\eps|\xi|\le r_1$. Thus, we conclude
 \bq
 \sigma(B_{\eps}(\xi))
 \cap\{\lambda\in\mathbb{C}\,|\,\mathrm{Re}\lambda\ge-\nu_0+\delta\}
\subset
 \{\lambda\in\mathbb{C}\,|\,\mathrm{Re}\lambda\ge
    -\nu_0+\delta,\,|\mathrm{Im}\lambda|\le \beta \},
    \quad \eps|\xi|\ge r_0.                            \label{SpH}
 \eq

Next, we prove that $\sup_{ \eps |\xi|\ge r_0}{\rm Re}\lambda<0$. Base on the above argument, it is sufficient to prove
that   ${\rm Re}\lambda<0$ for $\eps|\xi|\in[r_0,r_1]$ and $|{\rm Im}\lambda|\le \beta$. If not, there exists
$\lambda_n\in \sigma(B_{\eps}(\xi_n))$ with $\eps|\xi_n|\in[r_0,r_1]$ and $f_n\in L^2(\R^3)$ with $\|f_n\|_{\xi_n}=1$
such that
$$Lf_n-i\eps(v\cdot\xi_n)f_n-\frac{i\eps(v\cdot\xi_n)}{|\xi_n|^2}P_0 f_n=\lambda_nf_n,\quad
\text{Re}\lambda_n\rightarrow0.$$
Taking the inner product $(\cdot,\cdot)_{\xi_n}$ between the above equation and $f_n$ and choosing the real part, we have
$${\rm Re}\lambda_n\|f_n\|_{\xi_n}= (Lf_n,f_n)\le -\mu \|P_1f_n\|^2.$$
This implies that $\lim_{n\to \infty}\|P_1f_n\|=0$ and hence $\lim_{n\to \infty}\|P_0f_n\|_{\xi_n}=1$.
Since $P_0$ is a compact operator, there exists a subsequence $n_j$ of $n$ and a function $f_0\ne 0\in N_0$ such that  $P_0f_{n_j}\to f_0$ in $L^2$ as $j\to \infty$. Thus we have $f_{n_j}\to f_0$ in $L^2$ as $j\to \infty$.
Due to the fact
that $\eps|\xi_n|\in[r_0,r_1]$, $|{\rm Im}\lambda_n|\le \beta$ and ${\rm Re}\lambda_n\to 0$,  there exists a subsequence of (still denoted by) $(\xi_{n_j},\lambda_{n_j})$, and $(\xi_0,\lambda_0)$ with $\eps|\xi_0|\in[r_0,r_1]$, ${\rm Re}\lambda_0= 0$
such that $(\xi_{n_j},\lambda_{n_j})\to (\xi_0,\lambda_0).$
It follows that $B_{\eps}(\xi_0) f_0=\lambda_0 f_0$ and thus $\lambda_0$ is an eigenvalue of
$B_{\eps}(\xi_0)$ with ${\rm Re}\lambda_0=0$, which contradicts ${\rm
Re}\lambda<0$ for $\xi\ne 0$ established by Lemma~\ref{Egn}. This proves the lemma.
\end{proof}

Then, we investigate the spectrum and resolvent sets of $B_{\eps}(\xi)$  for $\eps|\xi|\le r_0$. To this end, we decompose $\lambda-B_{\eps}(\xi)$ as
follows
\be
\lambda-B_{\eps}(\xi)=\lambda P_{0}+\lambda P_1-Q_{\eps}(\xi)+i\eps P_{0}(v\cdot\xi)P_1+i\eps P_1(v\cdot\xi)\(1+\frac1{|\xi|^2}\)P_{0},\label{Bd3}
\ee
where
\be
Q_{\eps}(\xi)=L-i\eps P_1(v\cdot\xi)P_1.\label{Qxi}
\ee

\begin{lem}\label{LP}
Let $\xi\neq0$ and $Q_{\eps}(\xi)$ defined by \eqref{Qxi}. We have
 \begin{enumerate}
\item If $\lambda\ne0$, then
\be
\bigg\|\lambda^{-1}P_1(v\cdot\xi)\(1+\frac1{|\xi|^2}\)P_{0}\bigg\|_\xi\le C(|\xi|+1)|\lambda|^{-1}.\label{S_2}
\ee

\item If $\mathrm{Re}\lambda>-1 $, then the operator $\lambda P_1-Q_{\eps}(\xi)$ is invertible on $N_0^\bot$ and satisfies
\bma
\|(\lambda P_1-Q_{\eps}(\xi))^{-1}\|&\leq(\mathrm{Re}\lambda+1 )^{-1},\label{S_3}\\
\|P_{0}(v\cdot\xi)P_1(\lambda P_1-Q_{\eps}(\xi))^{-1}P_1\|_\xi
&\leq C(\mathrm{Re}\lambda+1)^{-1} |\xi|\bigg(1+ \frac{|\lambda|}{1+\eps|\xi|}\bigg)^{-1} .\label{S_5}
\ema
\end{enumerate}
\end{lem}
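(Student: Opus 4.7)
The plan is to combine explicit computations that exploit $P_0 f=(f,\sqrt M)\sqrt M$, an energy estimate for the resolvent on $N_0^\perp$, and the Hermite-type identity $L\bigl((v\cdot\xi)\sqrt M\bigr)=-(v\cdot\xi)\sqrt M$ which makes $(v\cdot\xi)\sqrt M$ an eigenfunction of $L$ with eigenvalue $-1$. The delicate step will be extracting the cut-off factor $\bigl(1+|\lambda|/(1+\eps|\xi|)\bigr)^{-1}$ in \eqref{S_5}.

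For part (1), since $\int (v\cdot\xi)M\,dv=0$ the function $(v\cdot\xi)\sqrt M$ already lies in $N_0^\perp$, so the outer $P_1$ acts as the identity and
$$P_1(v\cdot\xi)(1+|\xi|^{-2})P_0 f=(1+|\xi|^{-2})(f,\sqrt M)(v\cdot\xi)\sqrt M.$$
Combining $\|(v\cdot\xi)\sqrt M\|_\xi=|\xi|$ with the inequality $(1+|\xi|^{-2})^{1/2}\|P_0 f\|\le\|f\|_\xi$ (which follows directly from the definition of $\|\cdot\|_\xi$) yields the desired bound after division by $|\lambda|$.

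For the first half of part (2), I would write $(\lambda P_1-Q_\eps(\xi))h=g$ with $h\in N_0^\perp$, pair with $h$ in $L^2$, and take the real part: the term $i\eps(P_1(v\cdot\xi)P_1 h,h)$ is purely imaginary by the self-adjointness of $P_1(v\cdot\xi)P_1$, while coercivity \eqref{L_4} together with $h=P_1 h$ gives $-(Lh,h)\ge\|h\|^2$. Hence $(\mathrm{Re}\lambda+1)\|h\|^2\le\|g\|\|h\|$, which simultaneously proves invertibility of $\lambda P_1-Q_\eps(\xi)$ on $N_0^\perp$ and \eqref{S_3}.

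The main obstacle is the refined bound \eqref{S_5}. Setting $\phi:=(v\cdot\xi)\sqrt M$, a direct computation shows $P_0(v\cdot\xi)h=(h,\phi)\sqrt M$, so the target reduces to controlling $|(h,\phi)|$ by two complementary estimates. The Cauchy--Schwarz inequality with $\|\phi\|=|\xi|$ gives $|(h,\phi)|\le|\xi|\|h\|\le|\xi|(\mathrm{Re}\lambda+1)^{-1}\|g\|$, which is sharp for small $|\lambda|$. For large $|\lambda|$ I would test the resolvent equation against $\phi$ and invoke $L\phi=-\phi$ together with the self-adjointness of $L$ and $P_1(v\cdot\xi)P_1$ to obtain
$$(\lambda+1)(h,\phi)=(g,\phi)-i\eps\bigl(h,(v\cdot\xi)\phi\bigr),$$
and bound the right-hand side via $|(g,\phi)|\le|\xi|\|g\|$ and $|(h,(v\cdot\xi)\phi)|\le C|\xi|^2\|h\|$; the second inequality comes from decomposing $(v\cdot\xi)^2\sqrt M=|\xi|^2\sqrt M+P_1\bigl((v\cdot\xi)^2\sqrt M\bigr)$ and using $(h,\sqrt M)=0$. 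Taking the minimum of the two resulting estimates on $|(h,\phi)|$ and distinguishing the regimes $|\lambda|\le 1+\eps|\xi|$ and $|\lambda|>1+\eps|\xi|$ reproduces the factor $\bigl(1+|\lambda|/(1+\eps|\xi|)\bigr)^{-1}$. The most delicate piece is the bookkeeping that interpolates the two estimates without losing the factor $(\mathrm{Re}\lambda+1)^{-1}$, which saturates as $\mathrm{Re}\lambda\to-1^+$ and must survive the passage from $|\lambda+1|^{-1}$ to the quoted cut-off.
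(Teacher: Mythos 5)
Most of your proposal is sound and runs parallel to the paper's proof: part (1) is the same direct computation; the coercivity pairing giving $(\mathrm{Re}\lambda+1)\|h\|^2\le\|g\|\,\|h\|$ is exactly the paper's estimate behind \eqref{S_3}; and your two complementary bounds for \eqref{S_5} match the paper's structure. Your large-$|\lambda|$ bound is a mild but genuine variant: the paper gains the factor $|\lambda|^{-1}$ through the operator identity $(\lambda P_1-Q_{\eps})^{-1}P_1=\lambda^{-1}P_1+\lambda^{-1}Q_{\eps}(\xi)(\lambda P_1-Q_{\eps})^{-1}P_1$ together with $\|P_0(v\cdot\xi)P_1Q_{\eps}(\xi)f\|_\xi\le C|\xi|(1+\eps|\xi|)\|P_1f\|$, whereas you gain $|\lambda+1|^{-1}$ by testing the resolvent equation against $\phi=(v\cdot\xi)\sqrt M$ and exploiting $L\phi=-\phi$ plus $(h,\sqrt M)=0$; the resulting bounds are of equal strength, and your case distinction $|\lambda|\lesssim 1+\eps|\xi|$ versus $|\lambda|\gtrsim 1+\eps|\xi|$ is exactly the paper's ``combination of the two cases'' (with the same implicit caveat, shared with the paper, that the interpolation literally recovers the factor $(\mathrm{Re}\lambda+1)^{-1}$ only when $\mathrm{Re}\lambda$ stays in a bounded range, which suffices for all later uses).

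The genuine gap is the claim that the energy estimate ``simultaneously proves invertibility'' of $\lambda P_1-Q_{\eps}(\xi)$ on $N_0^\bot$. The pairing argument gives injectivity, the a priori bound \eqref{S_3}, and closedness of the range, but for an unbounded operator this does not yield surjectivity onto $N_0^\bot$: a nontrivial cokernel is not excluded. The paper closes precisely this step with a separate argument: if $g\in N_0^\bot$ were orthogonal to $\mathrm{Ran}[\lambda P_1-Q_{\eps}(\xi)]$, then $(f,[\overline{\lambda}P_1-L-i\eps P_1(v\cdot\xi)P_1]g)=0$ for all $f\in N_0^\bot\cap D(L)$, and since the adjoint operator $\overline{\lambda}P_1-L-i\eps P_1(v\cdot\xi)P_1$ satisfies the same coercive estimate (as $\mathrm{Re}\overline{\lambda}=\mathrm{Re}\lambda>-1$), it follows that $g=0$, so the closed range is all of $N_0^\bot$. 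You need to add this adjoint (or an equivalent range) argument; with it, your proof of part (2) is complete.
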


\begin{proof}
By using
 $$
\bigg\|\lambda^{-1}P_1(v\cdot\xi)\(1+\frac1{|\xi|^2}\)P_{0}f\bigg\|_\xi\le C|\lambda|^{-1}\(|\xi|+\frac1{|\xi|}\)\|P_{0}f\|\le C|\lambda|^{-1}(|\xi|+1)\|f\|_\xi,
$$
we prove \eqref{S_2}.

Then, we show that for any $\lambda\in\mathbb{C}$ with
$\mathrm{Re}\lambda>-1 $, the operator $\lambda P_1-Q_{\eps}(\xi)=\lambda
 P_1-L+i\eps P_1(v\cdot\xi) P_1$ is  invertible from $N_0^\bot$ to
itself. Indeed, by \eqref{L_4}, we obtain for any $f\in N_0^\bot\cap
D(L)$ that
 \be
 \text{Re}([\lambda  P_1-L+i\eps P_1(v\cdot\xi) P_1]f,f)
 =\text{Re}\lambda(f,f)-(Lf,f)\geq(1+\text{Re}\lambda)\|f\|^2, \label{A_1}
 \ee
which implies that the operator $\lambda P_1-Q_{\eps}(\xi)$ is a one-to-one
map from $N_0^\bot$ to itself so long as $\text{Re}\lambda>-1 $, and its range $\textrm{Ran}[\lambda P_1-Q_{\eps}(\xi)]$ is a closed
subspace of $L^2(\R^3_v)$. It then remains to show that the operator
$\lambda P_1-Q_{\eps}(\xi)$ is also a surjective map from $N_0^\bot$ to
$N_0^\bot$, namely, $\textrm{Ran}[\lambda P_1-Q_{\eps}(\xi)] = N_0^\bot$.
In fact, if it does not hold, then there exists a function $g\in
N_0^\bot \setminus \textrm{Ran}[\lambda P_1-Q_{\eps}(\xi)]$ with $g\neq0$
so that for any $f\in N_0^\bot\cap D(L)$ that
$$
 ([\lambda P_1-L+ i\eps P_1(v\cdot\xi) P_1]f,g)
 =(f,[\overline{\lambda} P_1-L- i\eps P_1(v\cdot\xi) P_1]g)=0,
$$
which yields $g=0$ since the operator
$\overline{\lambda} P_1-L- i\eps  P_1(v\cdot\xi) P_1$ is dissipative and
satisfies the same estimate as \eqref{A_1}. This is a contradiction,
and thus $\textrm{Ran}[\lambda P_1-Q_{\eps}(\xi)] = N_0^\bot$.  The estimate
\eqref{S_3} follows directly from \eqref{A_1}.

By \eqref{S_3} and $\|P_{0}(v\cdot\xi) P_1f\|_\xi\le C(|\xi|+1)\|P_1f\|$, we have
 \be
 \| P_{0}(v\cdot\xi) P_1(\lambda  P_1-Q_{\eps}(\xi))^{-1} P_1f\|_\xi \leq C|\xi|(\mathrm{Re}\lambda+1 )^{-1}\|f\|.   \label{2.33a}
 \ee
Meanwhile, we can decompose the operator $ P_{0}(v\cdot\xi) P_1(\lambda
 P_1-Q_{\eps}(\xi))^{-1} P_1$ as
 $$
  P_{0}(v\cdot\xi) P_1(\lambda  P_1-Q_{\eps}(\xi))^{-1} P_1=\frac1\lambda  P_{0}(v\cdot\xi) P_1+\frac1\lambda  P_{0}(v\cdot\xi) P_1Q_{\eps}(\xi)(\lambda
 P_1-Q_{\eps}(\xi))^{-1} P_1.
 $$
This together with \eqref{S_3} and the fact
 $
 \| P_{0}(v\cdot\xi) P_1 Q_{\eps}(\xi)f\|_\xi\leq
C|\xi|(1+\eps|\xi|)\|P_1f\|
 $
give
 \be
 \| P_{0}(v\cdot\xi) P_1(\lambda  P_1-Q_{\eps}(\xi))^{-1} P_1f\|_\xi
 \leq
C|\xi||\lambda|^{-1}[(\mathrm{Re}\lambda+1 )^{-1}+1](1+\eps|\xi|)\|f\|. \label{2.33}
 \ee
The combination of the two cases \eqref{2.33a} and \eqref{2.33} yields \eqref{S_5}.
\end{proof}

By \eqref{Bd3} and Lemmas~\ref{Egn}--\ref{LP}, we are able to analyze  the spectral
and resolvent sets of the operator $B_{\eps}(\xi)$ as follows.

\begin{lem}\label{spectrum2} Fix $\eps\in (0,1)$. The following facts hold.
 \begin{enumerate}
\item  For all $\xi\ne 0$, there exists $y_0>0$ such that
\bq
 \{\lambda\in\mathbb{C}\,|\,
     \mathrm{Re}\lambda\ge-\frac{\nu_0}{2},\,|\mathrm{Im}\lambda|\geq y_0\}
 \cup\{\lambda\in\mathbb{C}\,|\,\mathrm{Re}\lambda>0\}
 \subset\rho(B_{\eps}(\xi)).                           \label{rb1}
\eq

\item  For any $\delta>0$, there exists $r_0=r_0(\delta)>0$ such that if $\eps(1+|\xi|)\leq r_0$, then
 \bq
 \sigma(B_{\eps}(\xi))\cap\{\lambda\in\mathbb{C}\,|\,\mathrm{Re}\lambda\ge-\frac12\}
 \subset
 \{\lambda\in\mathbb{C}\,|\,|\lambda|\le\delta\}.   \label{sg4}
 \eq
\end{enumerate}
\end{lem}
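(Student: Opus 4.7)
The plan is to prove both parts via a Schur-complement analysis built on the decomposition (\ref{Bd3}), which separates $B_{\eps}(\xi)$ into a diagonal part acting on $N_0\oplus N_0^\bot$ plus the two off-diagonal coupling terms $i\eps P_0(v\cdot\xi)P_1$ and $i\eps P_1(v\cdot\xi)(1+|\xi|^{-2})P_0$. Two structural observations make this effective: by (\ref{S_3}), the microscopic block $\lambda P_1-Q_{\eps}(\xi)$ is invertible on $N_0^\bot$ whenever $\mathrm{Re}\lambda>-1$; and $N_0=\mathrm{span}\{\sqrt M\}$ is one-dimensional, so the Schur complement on $N_0$ reduces to multiplication by the scalar $s(\lambda,\xi)=\lambda+\eps^2 w(\lambda,\xi)$, where $w=(P_0(v\cdot\xi)P_1(\lambda P_1-Q_{\eps}(\xi))^{-1}P_1(v\cdot\xi)(1+|\xi|^{-2})P_0\sqrt M,\sqrt M)$. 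Invertibility of $\lambda-B_{\eps}(\xi)$ is equivalent to $s\ne 0$, and combining (\ref{S_5}) with the elementary bound $\|P_1(v\cdot\xi)(1+|\xi|^{-2})P_0\sqrt M\|_\xi\le C(|\xi|+|\xi|^{-1})$ yields the master scalar estimate $|w(\lambda,\xi)|\le C(\mathrm{Re}\lambda+1)^{-1}(1+|\xi|)^2(1+\eps|\xi|)/(1+\eps|\xi|+|\lambda|)$.

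For part (1), the half-plane $\{\mathrm{Re}\lambda>0\}\subset\rho(B_{\eps}(\xi))$ is immediate from the contraction-semigroup property of Lemma~\ref{SG_1}(1). For the remaining strip $-\nu_0/2\le\mathrm{Re}\lambda\le 0$ with $|\mathrm{Im}\lambda|\ge y_0$ (shrinking $\nu_0$ if necessary so that $\nu_0<2$, while keeping the essential-spectrum conclusions of Lemma~\ref{Egn} intact), I will split at a fixed small threshold $\eps|\xi|=r_1$. In the high-frequency regime $\eps|\xi|\ge r_1$ I use the alternative decomposition (\ref{B_d}): Lemma~\ref{LP03} gives $\|K(\lambda-A_{\eps}(\xi))^{-1}\|_\xi$ and $\|\eps(v\cdot\xi)|\xi|^{-2}P_0(\lambda-A_{\eps}(\xi))^{-1}\|_\xi$ both $\le 1/4$ once $|\mathrm{Im}\lambda|\ge y_0$ for $y_0$ large enough, uniformly in $\eps|\xi|\ge r_1$ after paying the factor $(1+r_1^{-2})^{1/2}$ to pass from $L^2$ to $L^2_\xi$ via (\ref{eee}); a Neumann series in (\ref{B_d}) then inverts $\lambda-B_{\eps}(\xi)$. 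In the low-frequency regime $\eps|\xi|\le r_1$ I use the scalar reduction above: the master estimate yields $\eps^2|w|\le C(1+r_1)^3/y_0$, which is $<1/2$ for $y_0$ large, so $|s|\ge y_0-1/2>0$.

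For part (2), take any $\lambda$ with $\mathrm{Re}\lambda\ge -1/2$ and any $\xi$ with $\eps(1+|\xi|)\le r_0$. The microscopic block has inverse of norm $\le 2$ on $N_0^\bot$ by (\ref{S_3}), and the master estimate simplifies to $|w|\le 2C(1+|\xi|)^2$, whence $\eps^2|w|\le 2Cr_0^2$. Choosing $r_0$ so small that $2Cr_0^2<\delta/2$ forces $|s|\ge |\lambda|-\delta/2>\delta/2$ whenever $|\lambda|>\delta$, giving invertibility of $\lambda-B_{\eps}(\xi)$, which is exactly the contrapositive of (\ref{sg4}). The main subtlety throughout is not any single calculation but the geometry of the two norms: the weight in (\ref{eee}) degenerates as $|\xi|\to 0$, so the $L^2$-based estimates of Lemma~\ref{LP03} cannot be transferred cheaply to $L^2_\xi$ at low frequency, which is precisely why the $\xi$-norm estimates of Lemma~\ref{LP} (built on the alternative decomposition (\ref{Bd3})) must be used there; with this split in place, every remaining bound is uniform in $\eps\in(0,1)$ because $\eps$ enters only through the combinations $\eps(1+|\xi|)$ and $(1+\eps|\xi|)/|\lambda|$.
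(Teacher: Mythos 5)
Your argument is correct, and it uses the same skeleton as the paper — the block decomposition \eqref{Bd3} at low frequency, the decomposition \eqref{B_d} together with Lemma~\ref{LP03} at high frequency, and Lemma~\ref{SG_1} for $\mathrm{Re}\lambda>0$ — but it replaces the paper's inversion mechanism by a different one. The paper writes $\lambda-B_{\eps}(\xi)=(I+Y_{\eps}(\lambda,\xi))(\lambda P_0+\lambda P_1-Q_{\eps}(\xi))$ and inverts $I+Y_{\eps}$ by a Neumann series, bounding the two off-diagonal pieces separately through \eqref{S_2} and \eqref{S_5}; you instead invert the microscopic block via \eqref{S_3} and pass to the one-dimensional Schur complement on $N_0$, reducing everything to the scalar condition $s(\lambda,\xi)=\lambda+\eps^2w(\lambda,\xi)\neq0$, with $|w|$ controlled by composing \eqref{S_5} with the elementary bound on $P_1(v\cdot\xi)(1+|\xi|^{-2})P_0\sqrt M$. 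This buys you an exact invertibility criterion (not merely a sufficient smallness condition), avoids treating $\lambda=0$ separately, makes part (2) a one-line scalar smallness argument in $r_0$, and dovetails with the eigenvalue relation \eqref{eigen} used later; the paper's route is slightly more mechanical but avoids any discussion of block factorizations and their domain bookkeeping. Two minor remarks: your high-frequency case essentially re-proves the relevant portion of Lemma~\ref{LP01} (which you could simply cite, as the paper does), and your proviso of shrinking $\nu_0$ below $2$ is harmless and of the same nature as the paper's own implicit convention, since the paper's low-frequency step also only covers $\mathrm{Re}\lambda\ge-1/2$ before concluding \eqref{rb1}; your handling of the norm equivalence \eqref{eee} in the regime $\eps|\xi|\ge r_1$, where $|\xi|\ge r_1$ keeps the factor bounded, is also correct.
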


\begin{proof}
By Lemma \ref{LP}, we have for $\rm{Re}\lambda>-1 $ and
$\lambda\neq 0$  that the operator
$\lambda  P_0+\lambda  P_1-Q_{\eps}(\xi)$ is invertible on
$L^2_\xi(\R^3_v)$ and it satisfies
 \be
  (\lambda  P_0+\lambda  P_1-Q_{\eps}(\xi))^{-1} =\lambda^{-1} P_0+(\lambda  P_1-Q_{\eps}(\xi))^{-1} P_1,
 \ee
because the operator $\lambda  P_0$ is orthogonal to $\lambda
 P_1-Q_{\eps}(\xi)$. Therefore, we can re-write \eqref{Bd3} as
\bmas
 \lambda-B_{\eps}(\xi)
=&(I+Y_{\eps}(\lambda,\xi))(\lambda  P_0+\lambda  P_1-Q_{\eps}(\xi)),
 \\
Y_{\eps}(\lambda,\xi)= & i\eps\lambda^{-1} P_1(v\cdot\xi)(1+\frac1{|\xi|^2}) P_0
    +i\eps P_0(v\cdot\xi) P_1(\lambda P_1-Q_{\eps}(\xi))^{-1} P_1.
 \emas

By Lemma \ref{LP01}, there exists $R_0>0$ large enough so that if $\mathrm{Re}\lambda\geq
-\nu_0/2$ and $\eps |\xi|\geq R_0$, then   $\lambda-B_{\eps}(\xi)$ is  invertible on $L^2_{\xi}(\R^3_v)$ and
satisfies \eqref{E_6}. Thus, $\{\lambda\in\mathbb{C}\,|\,\mathrm{Re}\lambda\ge
-\nu_0/2\}\subset \rho(B_{\eps}(\xi))$ for $\eps |\xi|\ge R_0$.

For the case $\eps|\xi|\leq R_0$ such that $\eps(1+|\xi|)\leq R_0+1$, by \eqref{S_2} and
\eqref{S_5} we can choose $y_0>0$ such that it holds
for $\mathrm{Re}\lambda\ge-1/2$ and
$|\mathrm{Im}\lambda|\geq y_0$ that
 \be
 \|\eps\lambda^{-1}P_1(v\cdot\xi)(1+|\xi|^{-2})P_{0}\|_\xi\leq \frac14,
 \quad
\|\eps P_{0}(v\cdot\xi)P_1(\lambda P_1-Q_{\eps}(\xi))^{-1}P_1\|_\xi\leq\frac14.\label{bound_1}
 \ee
This implies that the operator $I+Y_{\eps}(\lambda,\xi)$ is invertible on
$L^{2}_\xi(\R^3_v)$ and thus  $\lambda-B_{\eps}(\xi)$ is invertible on $L^{2}_\xi(\R^3_v)$ and satisfies
 \be
 (\lambda-B_{\eps}(\xi))^{-1}
 =(\lambda^{-1} P_0+(\lambda P_1-Q_{\eps}(\xi))^{-1} P_1)(I+Y_{\eps}(\lambda,\xi))^{-1}.\label{S_8}
 \ee
Therefore, $\rho(B_{\eps}(\xi))\supset \{\lambda\in\mathbb{C}\,|\,{\rm
Re}\lambda\ge-1/2, |{\rm Im}\lambda|\ge y_0\}$ for $\eps|\xi|\leq R_0$. This and Lemma~\ref{SG_1} lead to \eqref{rb1}.

Assume that $ |\lambda|>\delta$ and $\mathrm{Re}\lambda\ge-1/2$. Then, by \eqref{2.33a}  and
\eqref{S_5} we can choose $r_0=r_0(\delta)>0$ so that estimates \eqref{bound_1}
still hold for $\eps(1+|\xi|)\leq r_0$, and the operator
$\lambda-B_{\eps}(\xi)$ is invertible on $L^{2}_\xi(\R^3)$.
Therefore, we have
 $\rho(B_{\eps}(\xi))\supset\{\lambda\in\mathbb{C}\,|\, |\lambda|>\delta,\mathrm{Re}\lambda\ge-1/2\}$
for $\eps(1+|\xi|)\leq r_0$, which gives \eqref{sg4}.
\end{proof}

Now we  establish  the asymptotic expansions of the eigenvalues  and  eigenfunctions of $B_{\eps}(\xi)$ for $\eps|\xi|$ sufficiently small.
Firstly, we consider a 1-D eigenvalue problem:
\be
B_{\eps}(s)e=:\(L-i\eps s v_1-i\eps\frac{v_1}{s}P_{0}\)e=\beta e,\quad s\in \R.\label{L_2}
\ee

Let $e$ be the eigenfunction of \eqref{L_2}, we rewrite $e$ in the
form $e=e_0+e_1$, where $e_0=P_{0}e=C_0\sqrt M$ and $e_1=(I-P_{0})e=P_1e$.  The
eigenvalue problem \eqref{L_2} can be decomposed into
\bma
&\lambda e_0=-i\eps sP_{0}[ v_1(e_0+e_1)],\label{A_2}\\
&\lambda e_1=Le_1-i\eps sP_1[ v_1(e_0+e_1)]-i\eps \frac{v_1}{s}e_0.\label{A_3}
\ema

From Lemma \ref{LP} and \eqref{A_3}, we obtain that for any
$\text{Re}\lambda>-1 $,
\bq e_1=i\eps (L-\lambda P_1-i sP_1v_1P_1)^{-1}P_1\(sv_1e_0+\frac{v_1}{s}e_0\).\label{A_4}\eq

Substituting  \eqref{A_4} into \eqref{A_2} and taking inner product the resulted equation with $\sqrt M$ gives
\be
  \lambda C_0=\eps^2(1+s^2)(R(\lambda,\eps s)v_1\sqrt{M},v_1\sqrt{M})C_0.\label{eigen}
\ee
where
$$ R(\lambda,s)=(L-\lambda P_1-i sP_1v_1P_1)^{-1}.$$

Denote
\be
D_0(z,s,\eps)=z- (1+s^2)(R(\eps^2z,\eps s)v_1\sqrt{M},v_1\sqrt{M}).\label{ddd}
\ee

\begin{lem}\label{eigen_1} There are  two constants $r_0,r_1>0$ such that the equation $D_0(z,s,\eps)=0$ has a unique solution $z=z(s,\eps)$ for  $\eps(1+|s|)\le r_0$ and $|z+1+s^2|\le r_1(1+s^2) $, which is a $C^{\infty}$ function of $s$, $\eps$ and satisfies
\be
z(s,0)=-(1+s^2) ,\quad \pt_\eps z(s,0)=0. \label{z1}
\ee
In particular, $z(s,\eps)$ satisfies the following expansion:
\be
z(s,\eps)=-(1+s^2)+O(\eps^2(1+s^2)^2). \label{z2}
\ee
\end{lem}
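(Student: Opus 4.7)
The plan is to recast $D_0(z,s,\eps)=0$ as a fixed-point problem and to apply an implicit function theorem in a form that is \emph{uniform} in the parameter $s$ over the unbounded set $\eps(1+|s|)\le r_0$. The key observation is that $R(\eps^2z,\eps s)$ depends on $(z,s,\eps)$ only through the two genuinely small quantities $\mu=\eps^2 z$ and $\tau=\eps s$: under $\eps(1+|s|)\le r_0$ and $|z+1+s^2|\le r_1(1+s^2)$ one has $|\tau|\le r_0$ and $|\mu|\le 2\eps^2(1+s^2)\le 2r_0^2$. For $r_0$ small, the bound \eqref{S_3} together with a resolvent expansion around $L|_{N_0^\bot}$ shows that $R(\mu,\tau)$ is well defined on $N_0^\bot$, uniformly bounded there, and real-analytic (hence $C^\infty$) jointly in $(\mu,\tau)$.

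Next I introduce the shifted unknown $w$ by $z=-(1+s^2)(1+w)$, so $|z+1+s^2|\le r_1(1+s^2)$ becomes $|w|\le r_1$, and rewrite $D_0(z,s,\eps)=0$ as
\[
\tilde D_0(w,s,\eps):=-(1+w)-G\bigl(-\eps^2(1+s^2)(1+w),\,\eps s\bigr)=0,\qquad G(\mu,\tau):=(R(\mu,\tau)v_1\sqrt{M},v_1\sqrt{M}).
\]
Writing $L$ in divergence form $Lf=M^{-1/2}\nabla_v\cdot(M\nabla_v(f/\sqrt{M}))$, a direct calculation gives $L(v_1\sqrt{M})=-v_1\sqrt{M}$. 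Since $v_1\sqrt{M}\in N_0^\bot$, this yields $G(0,0)=(L^{-1}v_1\sqrt{M},v_1\sqrt{M})=-\|v_1\sqrt{M}\|^2=-1$. Therefore $\tilde D_0(0,s,0)=0$ and $\partial_w\tilde D_0(0,s,0)=-1$, uniformly in $s$, which is the nondegeneracy required for an implicit function theorem.

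With these ingredients I apply the Banach fixed-point theorem to $\Phi_{s,\eps}(w):=-1-G(-\eps^2(1+s^2)(1+w),\eps s)$ on the closed ball $\{|w|\le r_1\}$, treating $(s,\eps)$ as parameters with $\eps(1+|s|)\le r_0$. The $w$-derivative is $\eps^2(1+s^2)\,\partial_\mu G(\mu,\tau)$, bounded by $Cr_0^2<1/2$ for $r_0$ small, giving contraction; invariance of the ball follows from $\Phi_{s,\eps}(0)=O(|\mu|+|\tau|)=O(\eps^2(1+s^2)+\eps|s|)=O(r_0)<r_1/2$. Smoothness of $G$ in $(\mu,\tau)$ combined with standard parameter-dependence of the contraction mapping principle yields a unique $C^\infty$ solution $w=w(s,\eps)$, and I set $z(s,\eps):=-(1+s^2)(1+w(s,\eps))$.

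The identities $w(s,0)=0$ and $z(s,0)=-(1+s^2)$ are immediate. Using $L((v_1^2-1)\sqrt{M})=-2(v_1^2-1)\sqrt{M}$ one computes $\partial_\tau G(0,0)=\tfrac{i}{2}\int(v_1^2-1)v_1M\,dv=0$ by oddness in $v_1$, while $\partial_\eps\mu|_{\eps=0}=0$; implicit differentiation of $\tilde D_0=0$ then gives $\partial_\eps w(s,0)=0$ and hence $\partial_\eps z(s,0)=0$. Finally, the expansion $z(s,\eps)=-(1+s^2)+O(\eps^2(1+s^2)^2)$ follows from a second-order Taylor expansion of $G$ at the origin together with $\partial_\mu G(0,0)=1$ and the bounds $|\mu|\le 2\eps^2(1+s^2)$, $\tau^2=\eps^2s^2\le\eps^2(1+s^2)$; after multiplication by the outer factor $(1+s^2)$ every remainder is absorbed into $O(\eps^2(1+s^2)^2)$. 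The main obstacle throughout is the uniformity in $s$ on the unbounded set $|s|\le r_0/\eps-1$; this is exactly what the rescaling ansatz $z=-(1+s^2)(1+w)$ and the pairing $(\mu,\tau)=(\eps^2z,\eps s)$ are designed to handle, reducing an apparently $s$-dependent problem to a genuine two-parameter IFT at the origin.
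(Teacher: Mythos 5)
Your proposal is correct and follows essentially the same route as the paper: both recast $D_0(z,s,\eps)=0$ as a fixed-point/contraction problem in which smallness enters only through the combinations $\eps^2z$ and $\eps s$, prove contraction and invariance on a ball of radius comparable to $r_1(1+s^2)$ uniformly in $s$, and then derive \eqref{z1}--\eqref{z2} by implicit differentiation using $L(v_1\sqrt{M})=-v_1\sqrt{M}$, $L((v_1^2-1)\sqrt{M})=-2(v_1^2-1)\sqrt{M}$ and oddness, together with uniform bounds on the derivatives of the resolvent entry. Your substitution $z=-(1+s^2)(1+w)$ is only a cosmetic normalization of the paper's ball $|z+1+s^2|\le r_1(1+s^2)$ (and your identity $P_1(v_1^2\sqrt{M})=(v_1^2-1)\sqrt{M}$ quietly corrects a harmless misprint in the paper's formula, with no effect on the vanishing of $\pt_\eps D_0$ at $\eps=0$).
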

\begin{proof}
By \eqref{ddd} and the fact $L(v_j\sqrt{M})=-v_j\sqrt{M}$, $j=1,2,3$, we have
\be D_0(z(s),s,0)=0\quad {\rm with}\quad z(s)=-(1+s^2).  \label{a}\ee
Define
$$D(z,s,\eps)=(1+s^2)R_{11}(\eps^2z,\eps s),$$
with $R_{11}(\eps^2z,\eps s)= (R(\eps^2z,\eps s)v_1\sqrt{M},v_1\sqrt{M})$.
 It is straightforward to verify that a solution of $D_0(z,s,\eps)=0$ for any fixed $s$ and $\eps$ is a fixed point of $D(z,s,\eps)$.

Since for any $(z,s,\eps)\in \mathbb{C}\times\R\times \R$,
$$ |\partial_z R_{11}(\eps^2 z,\eps s)|\le C\eps^2 ,\quad |\partial_\eps R_{11}(\eps^2 z,\eps s)|\le C(|\eps z|+|s|),$$
 it follows that
\bmas
|D(z,s,\eps)-z(s)|&= (1+s^2) \Big|R_{11}(\eps^2z,\eps s )-R_{11}(0,0 )\Big|\nnm\\
&\le C(1+s^2)(|\eps^2z|+|\eps s|)\le r_1(1+s^2),
\\
|D(z_1,s,\eps)-D(z_2,s,\eps)|&\le C\eps^2(1+s^2)|z_1-z_2|\le \frac12|z_1-z_2|,
\emas
for $|z-z(s)|\le r_1(1+s^2)$ and $\eps(1+|s|)\le r_0$ with $r_0,r_1>0$ sufficiently small.

Hence by the contraction mapping theorem, there exists a unique fixed point $z(s,\eps): \eps\in B(0,r_0/(1+|s|))\to B(z(s),r_1(1+s^2))$ such that $D(z(s,\eps),s, \eps)=z(s,\eps)$  and $z(s,0)=z(s)$. This is equivalent to that $D_0(z(s,\eps),s,\eps)=0$. Since $D_0(z,s,\eps)$ is $C^{\infty}$ with respect to $z$, $s$ and $\eps$, it follows that $z(s,\eps)$ is a $C^{\infty}$ function with respect to $s$ and $\eps$.

Next, we estimate the derivative of $z(s,\eps)$. By \eqref{ddd} and using the fact
$$
L\sqrt{M}=0,\,\,\, L(v_j\sqrt{M})=-v_j\sqrt{M},\,\,\, LP_1(v^2_j\sqrt{M})=-2P_1(v^2_j\sqrt{M}),\,\,\,j=1,2,3,
$$
with $P_1(v^2_j\sqrt{M})=(v^2_j- |v|^2/3)\sqrt{M}$, we have
\bmas
\pt_z D_0(z,s,0)&=1,\\
\pt_\eps D_0(z,s,0)&=i(1+s^2)s(P_1(v_1^2\sqrt{M}),v_1\sqrt{M})=0.
\emas
It follows that
\be
 \pt_\eps z(s,0)=- \frac{{\partial_\eps}D_0(z(s),s,0)}{{\partial_z}D_0(z(s),s,0)} =0. \label{b}
\ee
Combining \eqref{a}--\eqref{b}, we prove \eqref{z1}. Finally, by a direct computation  we obtain
$$
 \partial_z D_0(z,s,\eps) =1 +O(1)\eps^2(1+s^2) ,\quad  \partial_{\eps} D_0(z,s,\eps) =O(1)\eps(1+s^2)^2,
$$
for $\eps(1+|s|)\le r_0$ and $|z+1+s^2|\le r_1(1+s^2) $. This implies that
$$
 \pt_\eps z(s,\eps)=- \frac{{\partial_\eps}D_0(z,s,\eps)}{{\partial_z}D_0(z,s,\eps)} =O(1)\eps(1+s^2)^2,\quad \eps(1+|s|)\le r_0,
$$
which together with \eqref{z1} leads to \eqref{z2}. This proves the lemma.
\end{proof}

With the help of Lemma \ref{eigen_1}, we have the eigenvalue $\beta_0(s,\eps)$ and the corresponding eigenfunction $e_0(s,\eps)$ of $B_{\eps}(s)$
defined by \eqref{L_2} as follows.

\begin{lem}\label{eigen_4}
(1) There exists a small constant $r_0>0$ such that $ \sigma(B_{\eps}(s))\cap \{\lambda\in \mathbb{C}\,|\, \mathrm{Re}\lambda>-1 /2\} $ consists of one point $\beta_0(s,\eps)$ for   $\eps(1+|s|)\le  r_0$. The eigenvalue $\beta_0(s,\eps)$  is a $C^\infty$ function of $s$ and $\eps $, and admit the following asymptotic expansion for $\eps(1+|s|)\le  r_0$:
 \be
 \beta_{0}(s,\eps) = -\eps^2(1+s^2)+O(\eps^4(1+s^2)^2). \label{specr0}
 \ee

(2) The corresponding eigenfunction $e_0(s,\eps)=e_0(s,\eps,v)$ is $C^\infty$  in $s$ and $\eps$ satisfying
 \be            \label{eigf2}
  \left\{\bln
P_0e_0(s,\eps)&= \frac{s}{\sqrt{1+s^2}} \sqrt{M} +O(\eps s),\\
P_1e_0(s,\eps)&=  -i\eps \sqrt{1+s^2}  v_1\sqrt{M}+O(\eps^2(1+s^2)).
  \eln\right.
  \ee
\end{lem}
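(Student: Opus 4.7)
My approach is to reduce the eigenvalue problem for $B_\eps(s)$ to the scalar equation $D_0(z,s,\eps) = 0$ already analyzed in Lemma \ref{eigen_1}. Given an eigenpair $(\lambda, e)$ of $B_\eps(s)$ with $\mathrm{Re}\lambda > -1/2$, I decompose $e = C_0\sqrt{M} + e_1$ with $e_1 = P_1 e$ and first rule out $C_0 = 0$: if it were, then $e \in N_0^\perp$ would satisfy $(L - i\eps s P_1 v_1 P_1)e = \lambda e$, and pairing with $e$ gives $\mathrm{Re}\lambda\|e\|^2 = (Le,e) \le -\|e\|^2$ by \eqref{L_4}, forcing $\mathrm{Re}\lambda \le -1$, a contradiction. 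With $C_0 \ne 0$, Lemma \ref{LP} lets me solve \eqref{A_3} for $e_1$ in terms of $C_0$ via \eqref{A_4}; substituting into \eqref{A_2} and pairing with $\sqrt{M}$ then yields \eqref{eigen}, which with $z = \lambda/\eps^2$ is exactly $D_0(z,s,\eps) = 0$.

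Lemma \ref{eigen_1} now provides a unique solution $z(s,\eps)$ in the region $|z + 1 + s^2| \le r_1(1+s^2)$ for $\eps(1+|s|) \le r_0$, producing the eigenvalue $\beta_0(s,\eps) := \eps^2 z(s,\eps)$ with expansion \eqref{specr0}. To promote this to uniqueness among all eigenvalues in $\{\mathrm{Re}\lambda > -1/2\}$, I establish an a priori bound: the dissipative estimate \eqref{S_3} gives $\|R(\eps^2 z, \eps s)\| \le 1/(1 + \eps^2 \mathrm{Re} z) \le 2$, so $|R_{11}(\eps^2 z, \eps s)| \le 2$ (using $\|v_1\sqrt{M}\| = 1$) and hence $|z| \le 2(1+s^2)$. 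Since $R_{11}$ is analytic near $(0,0)$ with $R_{11}(0,0) = -1$ (from $L(v_1\sqrt{M}) = -v_1\sqrt{M}$), a Taylor expansion yields $z = -(1+s^2) + O(\eps(1+s^2)^{3/2})$, which lies inside the contraction region of Lemma \ref{eigen_1} provided $r_0$ is shrunk further relative to $r_1$; this forces $z = z(s,\eps)$ and hence uniqueness of the eigenvalue.

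For the eigenfunction I fix the normalization $C_0 = s/\sqrt{1+s^2}$, chosen so that the combination $(s + 1/s)C_0 = \sqrt{1+s^2}$ in \eqref{A_4} cancels the $1/s$ singularity arising from the $-i\eps(v_1/s)P_0$ term in $B_\eps(s)$. Then \eqref{A_4} becomes
\[e_1 = i\eps \sqrt{1+s^2}\, R(\beta_0, \eps s) P_1(v_1\sqrt{M}),\]
where $R(\lambda, s') = (L - \lambda P_1 - is' P_1 v_1 P_1)^{-1}$. Since $L(v_1\sqrt{M}) = -v_1\sqrt{M}$ and both $\beta_0$ and $\eps s$ are small, a Neumann-type expansion of the resolvent around $L^{-1}|_{N_0^\perp}$ yields $R(\beta_0, \eps s)(v_1\sqrt{M}) = -v_1\sqrt{M} + O(\eps^2(1+s^2) + \eps|s|)$. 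Substituting gives $e_1 = -i\eps \sqrt{1+s^2}\, v_1\sqrt{M} + O(\eps^2(1+s^2))$, matching \eqref{eigf2}. Smoothness of $e_0(s,\eps,v)$ in $(s,\eps)$ then follows from analyticity of the resolvent $R$ and the $C^\infty$ regularity of $z(s,\eps)$ given by Lemma \ref{eigen_1}.

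The main technical hurdle will be the uniqueness step: carefully verifying that the a priori bound $|z| \le 2(1+s^2)$ together with the Taylor expansion of $R_{11}$ indeed places every candidate $z$ inside the small contraction region of Lemma \ref{eigen_1}, which dictates how small $r_0$ must be taken. The normalization $C_0 = s/\sqrt{1+s^2}$ is non-obvious and is forced by requiring that $e_0(s,\eps)$ extend continuously (indeed smoothly) to $s=0$ despite the singular $1/s$ coefficient in $B_\eps(s)$.
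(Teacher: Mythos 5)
Your proposal is correct and follows the same backbone as the paper: reduce the eigenvalue problem via \eqref{A_2}--\eqref{A_4} to the scalar equation $D_0(z,s,\eps)=0$, invoke Lemma \ref{eigen_1} to get $\beta_0=\eps^2 z(s,\eps)$ with \eqref{specr0}, and build $e_0$ from the resolvent formula \eqref{A_4}. The differences are worth noting. First, you supply an explicit uniqueness argument (ruling out $C_0=0$ via the dissipativity \eqref{L_4}, then the a priori bound $\|R(\eps^2z,\eps s)\|\le(\mathrm{Re}\lambda+1)^{-1}\le 2$ from \eqref{S_3} giving $|z|\le 2(1+s^2)$, and a Taylor expansion of $R_{11}$ to place every candidate inside the contraction region of Lemma \ref{eigen_1}); the paper's written proof only constructs $(\beta_0,e_0)$ and leaves the ``consists of one point'' claim implicit (it is consistent with Lemma \ref{spectrum2}(2), but your argument is the cleaner way to close it, and your bookkeeping $\eps^2(1+s^2)+\eps|s|\le C\eps(1+s^2)^{1/2}\le Cr_0$ is exactly what is needed). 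Second, you fix the normalization by choosing $C_0=s/\sqrt{1+s^2}$ outright, whereas the paper imposes $(e_0(s,\eps),\overline{e_0(s,\eps)})_s=1$, solves \eqref{C_4} for $a_0$, and obtains $a_0=\frac{s}{\sqrt{1+s^2}}[1+O(\eps\sqrt{1+s^2})]$. Your choice proves the lemma as stated (eigenfunctions are only determined up to a scalar, and your $e_0$ satisfies \eqref{eigf2} and is smooth through $s=0$), but be aware that the exact normalization is used later in the residue computation behind Theorem \ref{rate1}, where $S_1$ is written as $(f,\overline{\psi_0})_\xi\psi_0$ without a normalizing denominator; with your $e_0$ that formula would carry an extra factor $(e_0,\overline{e_0})_s^{-1}=1+O(\eps\sqrt{1+s^2})$. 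Finally, your Neumann-type expansion $R(\beta_0,\eps s)v_1\sqrt M=-v_1\sqrt M+O(\eps^2(1+s^2)+\eps|s|)$ tacitly needs a weighted (velocity-moment) bound on the resolvent to control the term $\eps s\,P_1v_1P_1R\,v_1\sqrt M$; this is at the same level of rigor as the paper's unproved derivative bounds on $R_{11}$ in Lemma \ref{eigen_1}, so it is an acceptable, not a substantive, gap.
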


\begin{proof}
The eigenvalue $\beta_0(s,\eps)$ and the eigenfunction $e_0(s,\eps)$ can be constructed as follows. We take $\beta_0=\eps^2 z(s,\eps)$ with $ z(s,\eps)$ being the solution of the equation  $D_0(z,s,\eps)=0$ given in Lemma \ref{eigen_1}, and define the corresponding eigenfunction $e_0(s,\eps)$  by
 \be
  e_0(s,\eps)=a_0(s,\eps)\sqrt{M}   +i \eps\(s+\frac1s\) a_0(s,\eps)(L-\eps^2z(s,\eps)-i s\eps P_1v_1P_1)^{-1} v_1\sqrt{M},  \label{C_2}
\ee
where $a_0(s,\eps)$ is a complex value function determined later.
We can normalize $e_0(s,\eps)$ by taking
$$\(e_0(s,\eps),\overline{e_0(s,\eps)}\)_s=(e_0,\overline{e_0})+\frac{1}{s^2} (P_0e_0,P_0\overline{e_0})=1.$$

The coefficient $a_0(s,\eps) $ is determined by the normalization condition  as
 \bq \label{C_4}
 a_0(s,\eps)^2\bigg(1+\frac1{s^2}+\eps^2\bigg(s+\frac1s\bigg)^2D_1(s,\eps)\bigg)=1,
 \eq
 where $D_1(s,\eps)=(R(\beta_0,\eps s) v_1\sqrt{M}, R(\overline{\beta_0},-\eps s) v_1\sqrt{M}).$ Substituting \eqref{specr0} into \eqref{C_4}, we obtain
 \be a_0(s,\eps)=\frac{s}{\sqrt{1+s^2}}\[1+O(\eps \sqrt{1+s^2})\]. \label{C_3}\ee

Combining \eqref{C_2} and \eqref{C_3},
we can obtain the expansion of $e_0(s,\eps)$ given in \eqref{eigf2}. This completes the proof of the lemma.
\end{proof}

Now we consider a 3-D eigenvalue problem:
\bq B_{\eps}(\xi)\psi=\(L- i\eps v\cdot\xi-i\eps\frac{ v\cdot\xi}{|\xi|^2}P_{0}\)\psi=\lambda \psi,\quad \xi\in \R^3.\label{L_3a}\eq

With the help of Lemma \ref{eigen_4}, we have the eigenvalue $\lambda_0(|\xi|,\eps)$ and the corresponding eigenfunction $\psi_0(\xi,\eps)$ of $B_{\eps}(\xi)$  defined by \eqref{L_3a} as follows.

\begin{thm}\label{spect3}
(1) There exists a small constant $r_0>0$ such that $ \sigma(B_{\eps}(\xi))\cap \{\lambda\in \mathbb{C}\,|\, \mathrm{Re}\lambda>-1 /2\} $ consists of one point $\lambda_0(|\xi|,\eps)$ for   $\eps(1+|\xi|)\le  r_0$.  The eigenvalue $\lambda_0(|\xi|,\eps)$  is a $C^\infty$ function of $|\xi|$ and $\eps$ for $\eps(1+|\xi|)\leq r_0$:
 \be
 \lambda_0(|\xi|,\eps)=\beta_0(|\xi|,\eps)= -\eps^2(1+|\xi|^2)+O(\eps^4(1+|\xi|^2)^2),\label{eigen_2}
 \ee
 where $\beta_0(s,\eps)$ is a $C^\infty$ function of $s$ and $\eps$ given in Lemma \ref{eigen_4}.

(2) The eigenfunction $\psi_0(\xi,\eps)=\psi_0(\xi,\eps,v)$ satisfies
 \be
  \left\{\bln                      \label{eigf1}
P_0\psi_0(\xi,\eps)&= \frac{|\xi|}{\sqrt{1+|\xi|^2}} \sqrt{M} +O(\eps |\xi|),\\
P_1\psi_0(\xi,\eps)&= -i\eps \sqrt{1+|\xi|^2}  \(v\cdot\frac{\xi}{|\xi|}\)\sqrt{M}+O(\eps^2(1+|\xi|^2)).
  \eln\right.
  \ee
\end{thm}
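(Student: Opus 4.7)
The strategy is to reduce the three-dimensional spectral problem to the one-dimensional problem already solved in Lemma \ref{eigen_4} via rotational symmetry. The key observation is that the Fokker-Planck operator $L$, the Maxwellian $\sqrt{M}$, and hence the projection $P_{0}$, are all invariant under rotations of the velocity variable. Concretely, for any $O\in SO(3)$ define the unitary operator $(U_O f)(v)=f(Ov)$ on $L^2(\R^3_v)$; then $L U_O=U_O L$ and $P_{0}U_O=U_OP_{0}$.

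Now fix $\xi\neq 0$ and pick any orthogonal matrix $O$ with $Oe_1=\xi/|\xi|$, so that $(O^{-1}v)_1=v\cdot(Oe_1)=(v\cdot\xi)/|\xi|$. A direct computation, comparing \eqref{L_2} with \eqref{L_3a}, yields the intertwining identity
\begin{equation*}
B_{\eps}(\xi)=U_{O^{-1}}B_{\eps}(|\xi|)U_O,
\end{equation*}
so that $\sigma(B_{\eps}(\xi))=\sigma(B_{\eps}(|\xi|))$ and every eigenpair $(\beta,e)$ of the 1-D operator $B_{\eps}(|\xi|)$ produces an eigenpair $(\beta,U_{O^{-1}}e)$ of $B_{\eps}(\xi)$. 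Applying this to the eigenpair $(\beta_0(|\xi|,\eps),e_0(|\xi|,\eps))$ constructed in Lemma \ref{eigen_4} defines $\lambda_0(|\xi|,\eps):=\beta_0(|\xi|,\eps)$, which is $C^\infty$ in $|\xi|,\eps$ and satisfies the expansion \eqref{specr0}, proving part (1). Uniqueness of the eigenvalue within $\{\mathrm{Re}\lambda>-1/2\}$ for $\eps(1+|\xi|)\leq r_0$ is inherited from the 1-D statement via the unitary equivalence (and is also consistent with Lemma \ref{spectrum2}(2)).

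For part (2), set $\psi_0(\xi,\eps,v):=e_0(|\xi|,\eps,O^{-1}v)$. Since $P_0$ and $P_1$ commute with $U_{O^{-1}}$ and $\sqrt{M}$ is radial, the expansion of $P_0e_0$ in \eqref{eigf2} immediately yields the $P_0\psi_0$ estimate in \eqref{eigf1}. For the $P_1$ component, the leading term $-i\eps\sqrt{1+|\xi|^2}\,v_1\sqrt{M}$ of \eqref{eigf2} becomes, after $v\mapsto O^{-1}v$, exactly $-i\eps\sqrt{1+|\xi|^2}\,(v\cdot\xi/|\xi|)\sqrt{M}$, and the $O(\eps^2(1+|\xi|^2))$ remainder is preserved in $L^2_v$ because $U_{O^{-1}}$ is an isometry.

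The only subtlety — not a genuine obstacle — is the apparent dependence of $\psi_0$ on the choice of rotation $O$. Any two admissible choices differ by an element of $SO(3)$ fixing $e_1$; but the 1-D eigenfunction \eqref{C_2} is built from $\sqrt{M}$ and the resolvent $(L-\eps^2 z-i\eps sP_1v_1P_1)^{-1}$ applied to $v_1\sqrt{M}$, each of which is invariant under rotations fixing $e_1$ (using $L, P_0, P_1$ rotation-invariant and $v_1$ fixed by such rotations). Hence $\psi_0(\xi,\eps,\cdot)$ and the stated expansions depend only on $\xi/|\xi|$ and $|\xi|$, as required.
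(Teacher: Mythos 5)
Your proposal is correct and follows essentially the same route as the paper: the paper also reduces the 3-D eigenvalue problem \eqref{L_3a} to the 1-D problem \eqref{L_2} by conjugating $B_{\eps}(\xi)$ with the rotation taking $\xi/|\xi|$ to $(1,0,0)$, then transfers the eigenpair $(\beta_0(|\xi|,\eps), e_0(|\xi|,\eps))$ from Lemma \ref{eigen_4} and reads off the expansions \eqref{eigen_2}--\eqref{eigf1}. Your extra remarks on the unitarity of the rotation operator and the independence of the choice of rotation are sound refinements of the same argument.
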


\begin{proof}
Let $\O$ be a rotational transformation  in $\R^3$ such that $\O:\frac{\xi}{|\xi|}\to(1,0,0)$. We have
\bq \O^{-1} \(L- i\eps v\cdot\xi-i\eps\frac{ v\cdot\xi}{|\xi|^2}P_{0}\)\O=L- i\eps sv_1-i\eps\frac{  v_1}{s}P_{0}.\eq
From Lemma \ref{eigen_4}, we have the following eigenvalue and eigenfunction for \eqref{L_3a}:
\bgrs
\(L- i\eps v\cdot\xi-i\eps\frac{ v\cdot\xi}{|\xi|^2}P_{0}\)\psi_0(\xi,\eps)=\lambda_0(|\xi|,\eps) \psi_0(\xi,\eps),\\
\lambda_0(|\xi|,\eps)=\beta_0(|\xi|,\eps),\quad \psi_0(\xi,\eps)=\O e_0(|\xi|,\eps).
\egrs
This proves the theorem.
\end{proof}

 By virtue of Lemmas~\ref{LP03}--\ref{spectrum2} and Theorem \ref{spect3}, we can make a detailed analysis on the semigroup $S(t,\xi,\eps)=e^{\frac{t}{\eps^2}B_{\eps}(\xi)}$ in terms of an  argument similar to that of Theorem~3.4 in \cite{Li2}, we give the details of the proof in Appendix.

\begin{thm}\label{rate1}
Let $r_0>0$ be given in Theorem \ref{spect3}. For any fixed $\eps\in(0,r_0)$, the semigroup $S(t,\xi,\eps)=e^{\frac{t}{\eps^2}B_{\eps}(\xi)}$ with $\xi\neq0$ can be decomposed into
\bq  S(t,\xi,\eps)f=S_1(t,\xi,\eps)f+S_2(t,\xi,\eps)f,\quad f\in L^2_\xi(\R^3_v), \ \ t>0,\label{B_0}\eq
where
\be
S_1(t,\xi,\eps)f=e^{\frac{1}{\eps^2}\lambda_0(|\xi|,\eps)t}\(f,\overline{\psi_0(\xi,\eps)}\)_\xi \psi_0(\xi,\eps)1_{\{\eps (1+|\xi|)\le r_0\}}, \label{S1}
\ee
with $(\lambda_0(|\xi|,\eps),\psi_0(\xi,\eps))$ being the eigenvalue and eigenfunction of the operator $B_{\eps}(\xi)$ given in Theorem \ref{spect3} for $\eps(1+|\xi|)\le r_0$,
and $S_2(t,\xi,\eps) $ satisfies for two constants $a>0$ and $C>0$ independent of $\xi$ and $\eps$ that
\be
\|S_2(t,\xi,\eps)f\|_\xi\le Ce^{-\frac{at}{\eps^2}}\|f\|_\xi. \label{S2}
\ee
\end{thm}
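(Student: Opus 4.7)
The plan is to represent $S(t,\xi,\eps)$ via the Dunford integral
$$S(t,\xi,\eps)=\frac{1}{2\pi i}\int_{\Gamma}e^{\lambda t/\eps^2}(\lambda-B_{\eps}(\xi))^{-1}d\lambda$$
with $\Gamma$ enclosing $\sigma(B_{\eps}(\xi))$, then deform the contour so as to isolate the contribution of the principal eigenvalue $\lambda_0(|\xi|,\eps)$ from the rest of the spectrum, which must be pushed into a half-plane $\{\mathrm{Re}\lambda\le -a\}$ with $a>0$ independent of $\eps$ and $\xi$. This follows the template of \cite{Li2}. I would split the analysis into two frequency regimes determined by the parameter $\eps(1+|\xi|)$.

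\textbf{Small-wavenumber regime} ($\eps(1+|\xi|)\le r_0$). Theorem~\ref{spect3} identifies $\lambda_0(|\xi|,\eps)$ as the unique eigenvalue of $B_{\eps}(\xi)$ in $\{\mathrm{Re}\lambda>-1/2\}$, with eigenfunction $\psi_0(\xi,\eps)$, while Lemma~\ref{spectrum2}(2) places the rest of the spectrum inside $\{\mathrm{Re}\lambda\le -1/2\}$. The next step is to construct the rank-one Riesz projection
$$P_{\lambda_0}(\xi,\eps)=\frac{1}{2\pi i}\oint_{|\lambda-\lambda_0|=r}(\lambda-B_{\eps}(\xi))^{-1}d\lambda$$
for sufficiently small $r>0$. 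Using the symmetry \eqref{L_7}, one verifies that the pointwise conjugate $\overline{\psi_0(\xi,\eps)}$ is an eigenfunction of $B_{\eps}(-\xi)=B_{\eps}(\xi)^*$ with eigenvalue $\overline{\lambda_0}$; together with the normalization $(\psi_0,\overline{\psi_0})_\xi=1$ inherited from Lemma~\ref{eigen_4}, a standard spectral-projection computation gives $P_{\lambda_0}f=(f,\overline{\psi_0})_\xi\psi_0$, yielding the formula \eqref{S1} for $S_1$. Deforming the original contour into the small loop $|\lambda-\lambda_0|=r$ plus the vertical line $\Gamma_1:\mathrm{Re}\lambda=-1/4$ (closed at infinity using Lemma~\ref{spectrum2}(1)), I would identify $S_2=S-S_1$ with
$$S_2(t,\xi,\eps)=\frac{1}{2\pi i}\int_{\Gamma_1}e^{\lambda t/\eps^2}(\lambda-B_{\eps}(\xi))^{-1}d\lambda.$$
Since $|e^{\lambda t/\eps^2}|\le e^{-t/(4\eps^2)}$ on $\Gamma_1$, and the resolvent decomposition \eqref{S_8} together with the bounds \eqref{S_2} and \eqref{S_5} in Lemma~\ref{LP} provide $O((1+|\mathrm{Im}\lambda|)^{-1})$ decay, the integral converges absolutely and yields \eqref{S2}.

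\textbf{Large-wavenumber regime} ($\eps(1+|\xi|)>r_0$). Because $\eps<r_0$, after possibly shrinking $r_0$ one has $\eps|\xi|>r_0/2$, so Lemma~\ref{LP01} supplies a uniform gap $\sigma(B_{\eps}(\xi))\subset\{\mathrm{Re}\lambda\le -\alpha\}$. The indicator in \eqref{S1} correctly sets $S_1\equiv 0$ in this regime, and applying the same Dunford contour argument to the vertical line $\mathrm{Re}\lambda=-\alpha/2$, using the resolvent identity \eqref{E_6} and Lemma~\ref{LP03} for $|\mathrm{Im}\lambda|\to\infty$ decay, produces $\|S(t,\xi,\eps)f\|_\xi\le Ce^{-\alpha t/(2\eps^2)}\|f\|_\xi$, which is \eqref{S2} with $S_2=S$.

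The hard part will be securing uniform-in-$(\eps,\xi)$ resolvent bounds $\|(\lambda-B_{\eps}(\xi))^{-1}\|_\xi$ along the chosen contours, particularly in the weighted norm $\|\cdot\|_\xi$ whose $1/|\xi|^2$ factor on the $P_0$ component threatens to blow up at low frequencies, and in the transitional zone $\eps(1+|\xi|)\sim r_0$. This is handled by the two operator decompositions \eqref{B_d} and \eqref{Bd3}, which reduce matters to the tractable bounds for $(\lambda-A_{\eps}(\xi))^{-1}$ and $(\lambda P_1-Q_{\eps}(\xi))^{-1}$ from Lemmas~\ref{LP03} and~\ref{LP}; the potentially dangerous $1/|\xi|^2$ weight cancels against the explicit factors in the formula \eqref{S_8}. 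Once those uniform bounds and the integrability at infinity are in place, the contour integral estimates proceed by routine dominated-convergence arguments.
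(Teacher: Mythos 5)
Your overall architecture (Bromwich representation, the split at $\eps(1+|\xi|)\le r_0$ versus $>r_0$, isolating $\lambda_0$ by a residue/Riesz projection, pushing the rest of the spectrum to a left half-plane) is the same as the paper's, and your identification of $S_1$ via the rank-one projection $(f,\overline{\psi_0})_\xi\psi_0$ is correct. The genuine gap is in how you estimate the remainder: you claim that on the shifted vertical line the full resolvent has $O((1+|\mathrm{Im}\lambda|)^{-1})$ decay "by \eqref{S_2} and \eqref{S_5}", so that the line integral converges absolutely and gives \eqref{S2}. But \eqref{S_2} and \eqref{S_5} only bound the two components of the perturbation $Y_{\eps}(\lambda,\xi)$; the dominant piece of the resolvent in \eqref{S_8} is $(\lambda P_1-Q_{\eps}(\xi))^{-1}P_1$, for which the paper has only the uniform bound \eqref{S_3} with no decay in $\mathrm{Im}\lambda$ — and none is available, since $Q_{\eps}(\xi)=L-i\eps P_1(v\cdot\xi)P_1$ contains an unbounded skew-adjoint part, so its spectrum has unbounded imaginary extent and the resolvent along $\mathrm{Re}\lambda=-1/4$ does not vanish as $|\mathrm{Im}\lambda|\to\infty$. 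For the same reason your "closing at infinity" of the contour for the full resolvent is unjustified (Lemma \ref{spectrum2}(1) locates the resolvent set but gives no vanishing of the horizontal segments), and even a genuine $O(1/|y|)$ bound would not yield absolute convergence. The identical problem occurs in your high-frequency regime, where $(\lambda-A_{\eps}(\xi))^{-1}$ likewise has no decay in $\mathrm{Im}\lambda$ because of the multiplication operator $i\eps(v\cdot\xi)$.

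The paper's proof circumvents exactly this obstruction, and your argument needs the same (or an equivalent) device. For $\eps(1+|\xi|)\le r_0$ one writes $(\lambda-B_{\eps}(\xi))^{-1}=[\lambda^{-1}P_0+(\lambda P_1-Q_{\eps}(\xi))^{-1}P_1]-Z_{\eps}(\lambda,\xi)$ as in \eqref{V_1}: the bracketed part is not estimated on a contour at all — its inverse Laplace transform is recognized exactly as $P_0f+e^{\frac{t}{\eps^2}Q_{\eps}(\xi)}P_1f$, which decays like $e^{-t/\eps^2}$ by \eqref{decay_1}; only the remainder $Z_{\eps}$, which carries the extra factor $Y_{\eps}$ and hence $O(|\lambda|^{-1})$ decay, is shifted to $\mathrm{Re}\lambda=-1/2$, with residues at $\lambda_0$ and at $\lambda=0$ (the latter cancelling the $P_0f$) and with the line integral controlled not pointwise but through $L^2$-in-$\mathrm{Im}\lambda$ resolvent bounds of Plancherel type for $(\lambda P_1-Q_{\eps}(\xi))^{-1}$ and $\lambda^{-1}P_0$, combined with Cauchy--Schwarz in the weak pairing $(U_{-\frac12,\infty}(t)f,g)_\xi$, all carried out for $f$ in the dense set $D(B_{\eps}(\xi)^2)$. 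The high-frequency regime is handled analogously by splitting off $(\lambda-A_{\eps}(\xi))^{-1}$, whose inverse Laplace transform is the semigroup $e^{\frac{t}{\eps^2}A_{\eps}(\xi)}$ with decay \eqref{decay_2}, and estimating only $H_{\eps}(\lambda,\xi)$ on $\mathrm{Re}\lambda=-\sigma_0$. Without this subtraction of the explicitly invertible parts (or some substitute such as integration by parts in $\lambda$ on the dense domain), your contour-deformation and absolute-convergence steps do not go through.
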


\section{Fluid approximations of semigroup}
\setcounter{equation}{0}
\label{sect3}
In this section,  we give the first and second order fluid approximations of the semigroup $e^{\frac{t}{\eps^2}B_\eps}$, which will be used to prove the convergence and establish the convergence rate of the solution to  the VPFP system towards the solution to the  DDP system.

Let us introduce a Sobolev space of function $f=f(x,v)$ by $ H^k_P=\{f\in L^2(\R^3_{x}\times \R^3_v)\,|\, \|f\|_{H^k_P}<\infty\,\}\ (L^2_P=H^0_P)$
with the norm $\|\cdot\|_{H^k_P}$ defined by
 \bma
 \|f\|_{H^k_P}
 &=\(\intr (1+|\xi|^2)^k\|\hat{f}\|^2_\xi d\xi \)^{1/2}\nnm\\
 &=\(\intr (1+|\xi|^2)^k \(\|\hat{f}\|^2+\frac1{|\xi|^2}\lt|(\hat{f},\sqrt{M})\rt|^2\)d\xi
    \)^{1/2}, \label{H2P}
 \ema
where $\hat{f}=\hat{f}(\xi,v)$ denotes the Fourier transform of $f(x,v)$ with respect to $x\in \R^3$. Note that it holds
$$
 \|f\|^2_{H^k_P}
 =\|f\|^2_{H^k_{x}(L^2_{v})}+\| \nabla_x\Delta_x^{-1}(f,\sqrt{M})\|^2_{H^k_{x}}.
$$

For any $f_0\in L^2(\R^3_{x}\times \R^3_v)$, set
 \be
  e^{\frac{t}{\eps^2}B_\eps}f_0=(\mathcal{F}^{-1}e^{\frac{t}{\eps^2}B_\eps(\xi)}\mathcal{F})f_0.
  \ee
By Lemma \ref{SG_1}, we have
 $$
 \|e^{\frac{t}{\eps^2}B_\eps}f_0\|_{H^k_P}=\intr (1+|\xi|^2)^k\|e^{\frac{t}{\eps^2}B_\eps(\xi)}\hat f_0\|^2_\xi d\xi\le \intr (1+|\xi|^2)^k\|\hat f_0\|^2_\xi d\xi
=\|f_0\|_{H^k_P}.
 $$
This means that the linear operator $\eps^{-2} B_\eps$ generates a strongly continuous contraction semigroup $e^{\frac{t}{\eps^2}B_\eps}$ in $H^k_P$, and therefore, $f(t,x,v)=e^{\frac{t}{\eps^2}B_\eps}f_0$ is a global solution to the linearized VPFP system \eqref{LVPFP1} for any $f_0\in H^k_P$.

Now we are going to establish the first and second order fluid approximations of the semigroup $e^{\frac{t}{\eps^2}B_\eps}$. First of all, we introduce the following linearized DDP system to \eqref{n_1}--\eqref{n_2}:
\be
\dt n=Dn,\quad n(0,x)=n_0(x),\label{LDDP}
\ee
where
\be  D=-I+\Delta_x . \label{A}\ee
It is easy to verify that the operator $D$ generates a strongly continuous contraction semigroup $e^{tD}$ in $H^k_x$. Thus, $n(t,x)=e^{tD}n_0$ is a global solution to the linearized DDP system \eqref{LDDP} for any $n_0\in H^k_x$.

We give a prepare lemma which will be used to study the  fluid dynamical approximations of the semigroup $e^{\frac{t}{\eps^2}B_\eps}$.

\begin{lem} \label{S2a}
For any $f_0\in N_0$, we have
\be
\|S_{2}(t,\xi,\eps)f_0\|_{\xi}\le C\(\eps(1+|\xi|)1_{\{\eps(1+|\xi|)\le r_0\}}+1_{\{\eps(1+|\xi|)\ge r_0\}}\)e^{-\frac{at}{\eps^2}}\| f_0\|_{\xi} .\label{S5}
\ee
\end{lem}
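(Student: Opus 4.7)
The plan is to split according to the two regimes determined by the indicator functions in the statement, matching the definition of $S_1$ in Theorem \ref{rate1}. On the high-frequency range $\eps(1+|\xi|)\ge r_0$ the projection $S_1$ is identically zero so $S_2=S$, and then the bound \eqref{S2} gives
$$
\|S_2(t,\xi,\eps)f_0\|_\xi\le Ce^{-at/\eps^2}\|f_0\|_\xi,
$$
which is exactly the claimed estimate on this range.

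For the low-frequency range $\eps(1+|\xi|)\le r_0$, I would exploit the one-dimensional spectral projection associated to $\lambda_0(|\xi|,\eps)$. Set $Pf:=(f,\overline{\psi_0(\xi,\eps)})_\xi\,\psi_0(\xi,\eps)$; the normalization $(\psi_0,\overline{\psi_0})_\xi=1$ built into Lemma \ref{eigen_4} makes $P$ a genuine projection, and because $\psi_0$ is an eigenfunction of $B_\eps(\xi)$ a direct computation using \eqref{S1} gives $SP=S_1P=S_1$, hence $S_2P=0$ and $S_2=S_2(I-P)$. Applying \eqref{S2} now to $(I-P)f_0$ reduces the problem to the $t$-independent inequality
$$
\|(I-P)f_0\|_\xi\le C\eps(1+|\xi|)\|f_0\|_\xi, \qquad f_0\in N_0,\ \eps(1+|\xi|)\le r_0.
$$

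To prove this inequality I write $f_0=c(\xi)\sqrt{M}$, so $\|f_0\|_\xi^2=(1+|\xi|^{-2})|c|^2$, and use $P_1f_0=0$ together with the expansions \eqref{eigf1}. A short calculation starting from the identity $(f_0,\overline{\psi_0})_\xi=(1+|\xi|^{-2})(f_0,\overline{P_0\psi_0})$ yields
$$
(f_0,\overline{\psi_0})_\xi=\tfrac{\sqrt{1+|\xi|^2}}{|\xi|}\,c\,\bigl(1+O(\eps\sqrt{1+|\xi|^2})\bigr).
$$
Substituting back into $f_0-(f_0,\overline{\psi_0})_\xi\psi_0$ and re-expanding $P_0\psi_0,P_1\psi_0$ via \eqref{eigf1}, the leading $c\sqrt{M}$-contributions cancel in the $P_0$-component, leaving
$$
\|P_0(I-P)f_0\|=O(\eps\sqrt{1+|\xi|^2})\,|c|,\qquad \|P_1(I-P)f_0\|=O\!\bigl(\eps(1+|\xi|^2)/|\xi|\bigr)\,|c|.
$$
Inserting these into $\|(I-P)f_0\|_\xi^2=(1+|\xi|^{-2})\|P_0(I-P)f_0\|^2+\|P_1(I-P)f_0\|^2$ and comparing with $\|f_0\|_\xi^2$ produces the gain $O(\eps\sqrt{1+|\xi|^2})\le C\eps(1+|\xi|)$, which when combined with the semigroup estimate above finishes the proof.

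The main obstacle is the cancellation in the $P_0$-component: both the leading part of $f_0$ and the leading part of $(f_0,\overline{\psi_0})_\xi\,P_0\psi_0$ are equal to $c\sqrt{M}$, and the $O(\eps)$ gain only emerges after tracking both expansions in \eqref{eigf1} to the next order simultaneously. This is precisely where the hypothesis $f_0\in N_0$ enters nontrivially, through the identity that reduces $(f_0,\overline{\psi_0})_\xi$ to a weighted pairing against $P_0\psi_0$; without the exact normalization $(\psi_0,\overline{\psi_0})_\xi=1$ and the precise leading behavior of $P_0\psi_0$ and $P_1\psi_0$ from Theorem \ref{spect3}, no such cancellation is visible and only the coarser bound $\|(I-P)f_0\|_\xi\le C\|f_0\|_\xi$ survives.
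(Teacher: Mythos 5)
Your proposal is correct and follows essentially the same route as the paper: split off the high-frequency regime where \eqref{S2} already gives the claim, use that $S_2$ annihilates the spectral projection $P_{\eps}(\xi)f=(f,\overline{\psi_0})_\xi\psi_0$ so that $S_2=S_2(I-P_{\eps}(\xi))$ on $\{\eps(1+|\xi|)\le r_0\}$, and then exploit the expansions \eqref{eigf1} together with $f_0\in N_0$ to get the cancellation yielding $\|(I-P_{\eps}(\xi))f_0\|_\xi\le C\eps(1+|\xi|)\|f_0\|_\xi$. The only cosmetic difference is that you justify $S(t,\xi,\eps)P_{\eps}(\xi)=S_1(t,\xi,\eps)$ directly from the eigenvector relation $e^{tB_{\eps}(\xi)/\eps^2}\psi_0=e^{\lambda_0 t/\eps^2}\psi_0$, while the paper verifies the same identity via the resolvent contour representation \eqref{V_3}; both are valid.
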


\begin{proof}
Define the projection $P_{\eps}(\xi)$ by
$$P_{\eps}(\xi)f= \(f,\overline{\psi_0(\xi,\eps)}\)_{\xi} \psi_0(\xi,\eps),\quad \forall f\in L^2(\R^3_v),$$
where $ \psi_0(\xi,\eps)$ is the eigenfunctions of $B_{\eps}(\xi)$ for $\eps(1+|\xi|)\le r_0$ defined by \eqref{eigf1}.

By Theorem \ref{rate1}, we can assert that
\be S_1(t,\xi,\eps)=S(t,\xi,\eps)1_{\{\eps(1+|\xi|)\le r_0\}}P_{\eps}(\xi). \label{S1a}
\ee
Indeed, it follows from \eqref{V_3} that for $\eps(1+|\xi|)\le r_0$,
 \bmas
  e^{\frac{t}{\eps^2}B_{\eps}(\xi)}P_{\eps}(\xi)f =&\frac1{2\pi i}\int^{\kappa+ i\infty}_{\kappa- i\infty}
   e^{ \frac{\lambda t}{\eps^2}}(\lambda-B_{\eps}(\xi))^{-1}P_{\eps}(\xi)fd\lambda\\
   =&\frac1{2\pi i}\int^{\kappa+ i\infty}_{\kappa- i\infty}
   e^{ \frac{\lambda t}{\eps^2}}(\lambda-\lambda_0(|\xi|,\eps))^{-1}P_{\eps}(\xi)fd\lambda\\
   =&e^{\frac{1}{\eps^2}\lambda_0(|\xi|,\eps)t}(f,\overline{\psi_0(\xi,\eps)})_\xi \psi_0(\xi,\eps)=S_1(t,\xi,\eps)f   .
 \emas
By \eqref{S1a}, we can decompose $S_2(t,\xi,\eps)$ into
\be
S_2(t,\xi,\eps)=S_{21}(t,\xi,\eps)+S_{22}(t,\xi,\eps),\label{S3}
\ee
where
\bma
S_{21}(t,\xi,\eps)&=S(t,\xi,\eps)1_{\{\eps(1+|\xi|)\le r_0\}}\(I- P_{\eps}(\xi)\),\\
S_{22}(t,\xi,\eps)&=S(t,\xi,\eps)1_{\{\eps(1+|\xi|)\ge r_0\}}.
\ema
It holds that
\be
\|S_{2j}(t,\xi,\eps)f\|_{\xi}\le Ce^{-\frac{at}{\eps^2}}\| f\|_{\xi},\quad j=1,2.
\ee

Since  for any $f_0\in N_0$,
$$
f_0-P_{\eps}(\xi)f_0
=(f_0,h_0(\xi))_\xi h_0(\xi)-(f_0,\overline{\psi_0(\xi,\eps)})_\xi \psi_0(\xi,\eps),
$$
where $h_0(\xi)=\frac{|\xi|}{\sqrt{1+|\xi|^2}} \sqrt{M}$, it follows from \eqref{eigf1} and the the fact $S_{21}(t,\xi,\eps)=S_{21}(t,\xi,\eps)(I- P_{\eps}(\xi))$ that
\be
\|S_{21}(t,\xi,\eps)f_0\|_{\xi}\le C\eps(1+|\xi|)1_{\{\eps(1+|\xi|)\le r_0\}}e^{-\frac{at}{\eps^2}}\|f_0\|_{\xi}.\label{S4}
\ee
By combining \eqref{S3}--\eqref{S4}, we obtain \eqref{S5}.
\end{proof}

Then, we have the first order fluid approximation of the semigroup $e^{\frac{t}{\eps^2}B_\eps}$ as follow.

\begin{lem} \label{fl1}
For any integer $k\ge 0$ and any $f_0\in H^{k+1}_{P}$, we  have
\be
 \left\|e^{\frac{t}{\eps^2}B_\eps}f_0-e^{tD}n_0\sqrt{M}\right\|_{H^k_{P}}
 \le C\(\eps e^{-\frac{t}{2}}+e^{-\frac{at}{\eps^2}}\)\|f_0\|_{H^{k+1}_{P}}, \label{limit1}
\ee
where $n_0=(f_0,\sqrt{M}),$ and $a$, $C>0$ are two constants independent of $\eps$. Moreover, if $f_0\in N_0$, then
\be
 \left\|e^{\frac{t}{\eps^2}B_\eps}f_0-e^{tD}n_0\sqrt{M}\right\|_{H^k_{P}}
 \le C\eps e^{-\frac{t}{2}}\|f_0\|_{H^{k+1}_{P}}. \label{limit1a}
\ee
\end{lem}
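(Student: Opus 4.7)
The plan is to work on the Fourier side, where $e^{tD}n_0\sqrt M$ acts by multiplication by $e^{-(1+|\xi|^2)t}\hat n_0\sqrt M$ with $\hat n_0=(\hat f_0,\sqrt M)$, and to exploit the semigroup decomposition $S(t,\xi,\eps)=S_1+S_2$ of Theorem \ref{rate1}. I would split the frequency space into the low-frequency region $\{\eps(1+|\xi|)\le r_0\}$, where $S_1$ is supported and Theorem \ref{spect3} applies, and its complement, on which $S_1\equiv 0$ and $|\xi|\ge r_0/\eps-1\gtrsim 1/\eps$.

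In the low-frequency region I would write
\begin{align*}
S_1(t,\xi,\eps)\hat f_0-e^{-(1+|\xi|^2)t}\hat n_0\sqrt M
&=\bigl[e^{\lambda_0 t/\eps^2}-e^{-(1+|\xi|^2)t}\bigr](\hat f_0,\overline{\psi_0})_\xi\psi_0\\
&\quad+e^{-(1+|\xi|^2)t}\bigl[(\hat f_0,\overline{\psi_0})_\xi\psi_0-\hat n_0\sqrt M\bigr].
\end{align*}
By \eqref{eigen_2}, $\lambda_0/\eps^2+(1+|\xi|^2)=O(\eps^2(1+|\xi|^2)^2)$, so the elementary bound $|e^a-e^b|\le|a-b|\max(e^a,e^b)$ together with $y e^{-y/4}\le C$ controls the first bracket by $C\eps(1+|\xi|)e^{-(1+|\xi|^2)t/2}$, after absorbing a small power of $r_0$. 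For the second bracket, the expansion \eqref{eigf1} gives, to leading order, $(\hat f_0,\overline{\psi_0})_\xi\approx\hat n_0\frac{\sqrt{1+|\xi|^2}}{|\xi|}$ and $\psi_0\approx\frac{|\xi|}{\sqrt{1+|\xi|^2}}\sqrt M$, whose product collapses exactly to $\hat n_0\sqrt M$: the singular $1/|\xi|^2$-weight in $(\cdot,\cdot)_\xi$ is precisely canceled by the vanishing of $P_0\psi_0$ at $\xi=0$, leaving an $O(\eps(1+|\xi|))\|\hat f_0\|_\xi$ remainder. Multiplying these pointwise estimates by $(1+|\xi|^2)^{k/2}$ and integrating via Plancherel yields a low-frequency contribution bounded by $C\eps e^{-t/2}\|f_0\|_{H^{k+1}_P}$.

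The high-frequency truncation of $e^{-(1+|\xi|^2)t}\hat n_0\sqrt M$ is supported on $|\xi|\gtrsim 1/\eps$, where $(1+|\xi|^2)^{-1}\le C\eps^2$ lets one trade one spatial derivative for a factor $\eps^2$ while $e^{-(1+|\xi|^2)t}\le e^{-t}$ supplies the time decay, contributing another $C\eps e^{-t/2}\|f_0\|_{H^{k+1}_P}$. The remainder part $S_2\hat f_0$ is bounded by Theorem \ref{rate1} through $\|S_2(t,\xi,\eps)\hat f_0\|_\xi\le Ce^{-at/\eps^2}\|\hat f_0\|_\xi$, producing the initial-layer term $Ce^{-at/\eps^2}\|f_0\|_{H^k_P}$ in \eqref{limit1}. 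For the improved estimate \eqref{limit1a} when $f_0\in N_0$, Lemma \ref{S2a} replaces this by a bound with the extra factor $\eps(1+|\xi|)1_{\{\eps(1+|\xi|)\le r_0\}}+1_{\{\eps(1+|\xi|)>r_0\}}$; the additional $(1+|\xi|)$ weight is absorbed into $\|f_0\|_{H^{k+1}_P}$, and choosing $\eps_0$ with $\eps_0^2\le 2a$ so that $e^{-at/\eps^2}\le e^{-t/2}$ absorbs the initial-layer term into $C\eps e^{-t/2}\|f_0\|_{H^{k+1}_P}$.

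The main obstacle will be the low-frequency analysis: the weighted inner product $(\cdot,\cdot)_\xi$ is singular at $\xi=0$, so one must verify that the expansion of $\psi_0$ produces exactly the algebraic cancellation that lets $(\hat f_0,\overline{\psi_0})_\xi\psi_0$ reduce smoothly to $\hat n_0\sqrt M$ across $\xi=0$, with a uniform $O(\eps(1+|\xi|))$ remainder up to the cutoff boundary $\eps(1+|\xi|)=r_0$. Balancing the eigenvalue perturbation $\eps^2(1+|\xi|^2)^2$ against the exponential decay $e^{-(1+|\xi|^2)t}$ so as to lose only one spatial derivative and obtain the claimed $\eps e^{-t/2}\|f_0\|_{H^{k+1}_P}$ bound requires careful bookkeeping of the $(1+|\xi|^2)$-weights.
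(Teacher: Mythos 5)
Your proposal is correct and follows essentially the same route as the paper: work on the Fourier side, split at the cutoff $\eps(1+|\xi|)\le r_0$, use the decomposition $S_1+S_2$ of Theorem \ref{rate1} together with the expansions \eqref{eigen_2}, \eqref{eigf1} (your exact cancellation $(\hat f_0,\overline{h_0})_\xi h_0=\hat n_0\sqrt M$ with $h_0=\frac{|\xi|}{\sqrt{1+|\xi|^2}}\sqrt M$ is precisely what underlies the paper's expansion of $S_1\hat f_0$), bound $S_2$ by $e^{-at/\eps^2}$, and invoke Lemma \ref{S2a} for the case $f_0\in N_0$. The only point you leave implicit, as the paper does in \eqref{S_7a}, is that on the region $\eps(1+|\xi|)>r_0$ the missing factor $\eps$ in the $S_2$ and truncation terms comes from $1\le \eps(1+|\xi|)/r_0$, i.e. the same one-derivative trade you already use for the high-frequency truncation of $e^{tD}n_0\sqrt M$.
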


\begin{proof}
For simplicity, we only prove the case of $k=0$. By Theorem \ref{rate1} and taking $\eps\le r_0/2$ with $r_0>0$ given in Theorem \ref{spect3}, we have
\bma
\left\|e^{\frac{t}{\eps^2}B_\eps}f_0-e^{tD}n_0\sqrt{M}\right\|^2_{L^2_{x}(L^2_{v})}=&\intr \left\|e^{\frac{t}{\eps^2}B_{\eps}(\xi)}\hat{f}_0-e^{tD(\xi)}\hat{n}_0\sqrt{M}\right\|^2d\xi \nnm\\
\le &3\int_{1+|\xi|\le \frac{r_0}{\eps}} \left\|S_1(t,\xi,\eps)\hat{f}_0-e^{tD(\xi)}\hat{n}_0\sqrt{M}\right\|^2d\xi\nnm\\
&+3\intr \left\|S_2(t,\xi,\eps)\hat{f}_0\right\|^2d\xi+3\int_{1+|\xi|\ge \frac{r_0}{\eps}}  \left|e^{tD(\xi)}\hat{n}_0\right|^2d\xi\nnm\\
=&I_1+I_2+I_3, \label{S_4}
\ema
where $D(\xi)=-(1+|\xi|^2)$ is the Fourier transform of the operator $D$.

We estimate $I_j$, $j=1,2,3$ as follows. Since from \eqref{S1}, \eqref{eigen_2} and \eqref{eigf1},
$$
S_1(t,\xi,\eps)\hat{f}_0=e^{-(1+|\xi|^2)t+O(\eps^2(1+|\xi|^2)^2)t}\(\hat{n}_0\sqrt{M}+O(\eps\sqrt{1+|\xi|^2})\),
$$
 it follows that
\bma
I_1=& 3\int_{1+|\xi|\le \frac{r_0}{\eps}} \Big\|e^{-(1+|\xi|^2)t+O(\eps^2(1+|\xi|^2)^2)t}\(\hat{n}_0 \sqrt{M}+O(\eps\sqrt{1+|\xi|^2})\) \nnm\\
&\qquad-e^{-(1+|\xi|^2)t }\hat{n}_0\sqrt{M}\Big\|^2d\xi\nnm\\
\le & C\eps^2\int_{1+|\xi|\le \frac{r_0}{\eps}}e^{-(1+|\xi|^2)t}\( r^2_0(1+|\xi|^2)^3t^2|\hat{n}_0|^2
 +(1+|\xi|^2)\|\hat{f}_0\|^2\) d\xi\nnm\\
\le& C\eps^2e^{-t}\|f_0\|^2_{H^1_x(L^2_{v})}.\label{I1}
\ema

By \eqref{S2}, we have
\be
I_2\le C\intr e^{-2\frac{at}{\eps^2}}\|\hat{f}_0\|^2_\xi d\xi
\le Ce^{-2\frac{at}{\eps^2}}\(\|f_0\|^2_{L^2_x(L^2_v)}+\|\Tdx\Phi_0\|^2_{L^2_{x}}\).
\ee

For $I_3$, it holds that
\be
I_3=3\int_{1+|\xi|\ge \frac{r_0}{\eps}}e^{-2(1+|\xi|^2)t }|\hat{n}_0|^2 d\xi
\le  C e^{-\frac{r_0^2}{\eps^2}t }\|f_0\|^2_{L^2_x(L^2_{v})}. \label{S_6}
\ee

Therefore, it follows from \eqref{S_4}--\eqref{S_6} that
\be
\left\|e^{\frac{t}{\eps^2}B_\eps}f_0-e^{tD}n_0\sqrt{M}\right\|^2_{L^2_{x}(L^2_{v})}
\le C\(\eps^2e^{-t}+e^{-2\frac{at}{\eps^2}}\)\|f_0\|^2_{H^1_{P}}. \label{aaa}
\ee

Similarly, we  can obtain
\bma
&\left\|\Tdx\Delta^{-1}_x(e^{\frac{t}{\eps^2}B_\eps}f_0,\sqrt{M})-\Tdx\Delta^{-1}_xe^{tD}n_0\right\|^2_{L^2_x}\nnm\\
\le &3\int_{1+|\xi|\le \frac{r_0}{\eps}} \left|\frac{\xi}{|\xi|^2}(S_1(t,\xi,\eps)\hat{f}_0,\sqrt{M})-\frac{\xi}{|\xi|^2}e^{tD(\xi)}\hat{n}_0\right|^2d\xi\nnm\\
&+3\intr  \frac{1}{|\xi|^2}\left|(S_2(t,\xi,\eps)\hat{f}_0,\sqrt{M})\right|^2d\xi+3\int_{1+|\xi|\ge \frac{r_0}{\eps}} \frac{1}{|\xi|^2} |e^{tD(\xi)}\hat{n}_0|^2d\xi\nnm\\
\le & C\(\eps^2e^{-t}+e^{-2\frac{at}{\eps^2}}\)\|f_0\|^2_{H^1_{P}}. \label{bbb}
\ema
Combining \eqref{aaa} and \eqref{bbb}, we obtain \eqref{limit1}.

Then, we deal with \eqref{limit1a}.
Since $f_0\in N_0,$ by Lemma \ref{S2a} we have
\bma
I_2&\le C\int_{1+|\xi|\le \frac{r_0}{\eps}}\eps^2 (1+|\xi|^2)e^{-2\frac{at}{\eps^2}}\| \hat{f}_0\|^2_{\xi}d\xi+C\int_{1+|\xi|\ge \frac{r_0}{\eps}}e^{-2\frac{at}{\eps^2}}\| \hat{f}_0\|^2_{\xi}d\xi \nnm\\
&\le C\eps^2 e^{-2\frac{at}{\eps^2}}\(\|f_0\|^2_{H^1_x(L^2_v)}+\|\Tdx\Phi_0\|^2_{L^2_{x}}\).\label{S_7a}
\ema
Moreover, we can bound $I_3$ by
\be
I_3=3\int_{1+|\xi|\ge \frac{r_0}{\eps}}e^{-2(1+|\xi|^2)t }|\hat{n}_0|^2 d\xi
\le  C\eps^2 e^{-\frac{r_0^2}{\eps^2}t }\|f_0\|^2_{H^1_x(L^2_{v})}. \label{S_6a}
\ee
Thus, by \eqref{I1}, \eqref{S_7a} and \eqref{S_6a}  we obtain \eqref{limit1a}.
This proves the lemma.
\end{proof}

We have the second order fluid approximation of the semigroup $e^{\frac{t}{\eps^2}B_\eps}$ as follow.

\begin{lem}\label{fl2}
For any integer $k\ge 0$ and  any $f_0\in H^{k+1}_{P}$ satisfying $P_0f_0=0$,
\be
\bigg\|\frac1{\eps}e^{\frac{t}{\eps^2}B_\eps}f_0+e^{tD}\divx m_0\sqrt{M}\bigg\|_{H^k_{P}}
\le C\(\eps t^{-\frac12}e^{-\frac{t}{2}}+ \frac1{\eps}e^{-\frac{at}{\eps^2}}\)\|f_0\|_{H^{k+1}_{x}(L^2_v)}, \label{limit2}
\ee
where $m_0=(f_0,v\sqrt{M}),$ and $a>0$, $C>0$ are two constants independent of $\eps$.
\end{lem}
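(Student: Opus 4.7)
The plan is to mirror the proof of Lemma \ref{fl1}, working on the Fourier side and exploiting the decomposition $S(t,\xi,\eps)=S_1(t,\xi,\eps)+S_2(t,\xi,\eps)$ from Theorem~\ref{rate1}, together with the hypothesis $P_0 f_0=0$ to extract a factor of $\eps$ from the fluid mode. For the $H^k_P$ estimate it is enough (by the same computations as in the proof of \eqref{bbb}) to control $\|e^{\frac{t}{\eps^2}B_{\eps}(\xi)}\hat f_0/\eps+e^{tD(\xi)}\widehat{\divx m_0}\sqrt M\|_\xi$ with the extra weight $(1+|\xi|^2)^k$, since the velocity part is proportional to $\sqrt M$ and the $\xi$-norm is equivalent to the $L^2_v$-norm weighted by $(1+|\xi|^{-2})$.

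For the low-frequency piece $\eps(1+|\xi|)\le r_0$ I would expand $S_1$ using Theorem~\ref{spect3}. Because $P_0\hat f_0=0$, the $\xi$-inner product simplifies to $(\hat f_0,\overline{\psi_0})_\xi=(\hat f_0,P_1\overline{\psi_0})$; substituting $P_1\psi_0=-i\eps\sqrt{1+|\xi|^2}(v\cdot\xi/|\xi|)\sqrt M+O(\eps^2(1+|\xi|^2))$ yields
$$
(\hat f_0,\overline{\psi_0})_\xi=-i\eps\sqrt{1+|\xi|^2}\,\frac{\xi\cdot\hat m_0}{|\xi|}+O(\eps^2(1+|\xi|^2))\|\hat f_0\|.
$$
Multiplying by $\psi_0=\frac{|\xi|}{\sqrt{1+|\xi|^2}}\sqrt M+O(\eps\sqrt{1+|\xi|^2})$ and dividing by $\eps$ produces the leading term $-ie^{\lambda_0 t/\eps^2}(\xi\cdot\hat m_0)\sqrt M$, which is precisely the Fourier transform of $-e^{tD}\divx m_0\sqrt M$ with $\lambda_0(|\xi|,\eps)/\eps^2$ in place of $-(1+|\xi|^2)$. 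The sum $\eps^{-1}S_1\hat f_0+e^{tD(\xi)}\widehat{\divx m_0}\sqrt M$ therefore reduces to two contributions: (i) the exponential mismatch $i(\xi\cdot\hat m_0)\sqrt M(e^{-(1+|\xi|^2)t}-e^{\lambda_0 t/\eps^2})$, and (ii) a remainder whose $L^2_v$-norm is bounded by $C\eps(1+|\xi|^2)\|\hat f_0\|e^{-(1+|\xi|^2)t/2}$.

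To obtain the advertised $\eps t^{-1/2}e^{-t/2}$ rate I would estimate these two contributions separately. For (i) I use $|\lambda_0/\eps^2+(1+|\xi|^2)|\le C\eps^2(1+|\xi|^2)^2$ from \eqref{eigen_2}, combined with the trivial bound, to obtain the interpolated estimate
$$
|e^{-(1+|\xi|^2)t}-e^{\lambda_0 t/\eps^2}|\le C\min\bigl(2,\,\eps^2(1+|\xi|^2)^2 t\bigr)e^{-(1+|\xi|^2)t/2}\le C\eps(1+|\xi|^2)t^{1/2}e^{-(1+|\xi|^2)t/2};
$$
squaring, multiplying by $(1+|\xi|^2)^{k+1}|\hat m_0|^2$ and using $\sup_{y\ge1}y^{2}e^{-yt/2}\le Ct^{-2}e^{-t/2}$ to absorb two extra powers of $(1+|\xi|^2)$ against one unit of the exponential yields $\|\cdot\|_{H^k_P}\le C\eps t^{-1/2}e^{-\eta t}\|f_0\|_{H^{k+1}_x(L^2_v)}$. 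For (ii), the estimate $\sup_{y\ge1}ye^{-yt/2}\le Ct^{-1}e^{-t/2}$ applied after squaring gives the same $\eps t^{-1/2}$ rate.

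The remaining pieces are handled directly. By \eqref{S2} one has $\|\eps^{-1}S_2\hat f_0\|_\xi\le C\eps^{-1}e^{-at/\eps^2}\|\hat f_0\|_\xi$, which after integration in $\xi$ gives the $\eps^{-1}e^{-at/\eps^2}$ term in \eqref{limit2}. The high-frequency tail of $e^{tD}\divx m_0\sqrt M$ supported on $\eps(1+|\xi|)\ge r_0$ is controlled by $e^{-r_0^2 t/(2\eps^2)}\|f_0\|_{H^{k+1}_x(L^2_v)}$ and is absorbed into the same exponential term. The main technical obstacle is step (i): getting the sharp $t^{-1/2}$ rather than the naive $t^{-1}$ loss from a direct Taylor expansion, which is why the geometric-mean interpolation between the Taylor bound $C\eps^2(1+|\xi|^2)^2 t$ and the trivial bound $2$ is essential before invoking the $\sup$-type estimates on $y^a e^{-yt/2}$.
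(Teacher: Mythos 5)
Your proposal is correct and follows essentially the same route as the paper: decompose via Theorem \ref{rate1}, use $P_0f_0=0$ together with the expansions \eqref{eigen_2}--\eqref{eigf1} to extract the factor $\eps$ and identify the leading fluid mode $-i(\xi\cdot\hat m_0)\sqrt M e^{\lambda_0 t/\eps^2}$, bound the $S_2$ part by $\eps^{-1}e^{-at/\eps^2}$ (legitimate since $\|\hat f_0\|_\xi=\|\hat f_0\|$ here), and treat the high-frequency tail of $e^{tD}\divx m_0$ directly, exactly as in \eqref{S_7}--\eqref{expan2}. The only caveat is cosmetic: your sup-inequalities such as $\sup_{y\ge1}y e^{-yt/2}\le Ct^{-1}e^{-t/2}$ are false for large $t$ as written and should be replaced by the standard form $y^a e^{-yt/2}\le C_a t^{-a}e^{-t/4}$ (retaining only part of the exponential), after which your interpolation argument delivers the stated $\eps t^{-1/2}$ rate with an exponential factor $e^{-ct}$, recovering $e^{-t/2}$ if one keeps the sharper bound $\lambda_0/\eps^2\le-(1-Cr_0^2)(1+|\xi|^2)$.
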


\begin{proof} For simplicity, we only prove the case of $k=0$.
By Theorem \ref{rate1} and taking $\eps\le r_0/2$ with $r_0>0$ given in Lemma \ref{eigen_4}, we have
\bma
&\left\|\frac1{\eps}e^{\frac{t}{\eps^2}B_\eps}f_0+e^{tD}\divx m_0\sqrt{M}\right\|^2_{L^2_{x}(L^2_{v})}\nnm\\
\le &3\int_{1+|\xi|\le \frac{r_0}{\eps}} \left\|\frac1{\eps}S_1(t,\xi,\eps)\hat{f}_0+e^{tD(\xi)}(\hat{m}_0\cdot\xi)\sqrt{M}\right\|^2d\xi\nnm\\
&+3\intr \left\|\frac1{\eps}S_2(t,\xi,\eps)\hat{f}_0\right\|^2d\xi+3\int_{1+|\xi|\ge \frac{r_0}{\eps}}  \left|e^{tD(\xi)}(\hat{m}_0\cdot\xi)\right|^2d\xi\nnm\\
=:&I_1+I_2+I_3. \label{S_7}
\ema
We estimate $I_j$, $j=1,2,3$ as follows. Since for any $f_0\in L^2_{x}(L^2_{v})$ satisfying $P_0f_0=0$,
$$
S_1(t,\xi,\eps)\hat{f}_0=e^{-(1+|\xi|^2)t+O(\eps^2(1+|\xi|^2)^2)t}\[-\eps(\hat{m}_0\cdot\xi)\sqrt{M}+O\(\eps^2(1+|\xi|^2)\)\],
$$
 it follows that
\bma
I_1=&3\int_{1+|\xi|\le \frac{r_0}{\eps}} \Big\|e^{-(1+|\xi|^2)t+O(\eps^2(1+|\xi|^2)^2)t}\[(\hat{m}_0\cdot\xi)\sqrt{M}-O\(\eps(1+|\xi|^2)\)\]\nnm\\
&\qquad-e^{-(1+|\xi|^2)t}(\hat{m}_0\cdot\xi)\sqrt{M}\Big\|^2d\xi\nnm\\
\le & C\eps^2\int_{1+|\xi|\le \frac{r_0}{\eps}}e^{-(1+|\xi|^2)t}\(  r_0^2(1+|\xi|^2)^3t^2|(\hat{m}_0\cdot\xi)|^2 +(1+|\xi|^2)^2\|\hat{f}_0\|^2 \)d\xi\nnm\\
\le& C\eps^2t^{-1} e^{-t}\| f_0\|^2_{H^1_{x}(L^2_{v})}.
\ema

By \eqref{S2} and $P_0f_0=0$, we have
\be
I_2 \le C\intr \frac1{\eps^2} e^{-2\frac{at}{\eps^2}}\|\hat{f}_0\|^2 d\xi
\le C\frac1{\eps^2}e^{-2\frac{at}{\eps^2}}\|f_0\|^2_{L^2_{x}(L^2_{v})}.
\ee

For $I_3$, it holds that
\be
I_3= 3\int_{1+|\xi|\ge \frac{r_0}{\eps}}  e^{-2(1+|\xi|^2)t }|(\hat{m}_0\cdot\xi)|^2d\xi
\le  C  e^{-\frac{r_0^2}{\eps^2}t }\|f_0\|^2_{H^1_{x}(L^2_{v})}. \label{S_9}
\ee

Therefore, it follows from \eqref{S_7}--\eqref{S_9} that
\be
\left\|\frac1{\eps}e^{\frac{t}{\eps^2}B_\eps}f_0+e^{tD}\divx m_0\sqrt{M}\right\|^2_{L^2_{x}(L^2_{v})}
\le
C\(\eps^2t^{-1} e^{-t}+ \frac1{\eps^2}e^{-2\frac{at}{\eps^2}}\)\|f_0\|^2_{H^1_{x}(L^2_v)}. \label{expan1}
\ee

Similarly, we have
\bma
&\left\|\frac1{\eps}\Tdx\Delta^{-1}_x(e^{\frac{t}{\eps^2}B_\eps}f_0,\sqrt{M})-\Tdx\Delta^{-1}_xe^{tD}\divx m_0\right\|^2_{L^2_x}\nnm\\
\le &3\int_{1+|\xi|\le \frac{r_0}{\eps}} \left|\frac{\xi}{\eps(1+|\xi|)^2}(S_1(t,\xi,\eps)\hat{f}_0,\sqrt{M})-\frac{\xi}{|\xi|^2}e^{tD(\xi)}(\hat{m}_0\cdot\xi)\right|^2d\xi\nnm\\
&+3\intr  \frac{1}{\eps^2|\xi|^2}\left| (S_2(t,\xi,\eps)\hat{f}_0,\sqrt{M})\right|^2d\xi+3\int_{1+|\xi|\ge \frac{r_0}{\eps}} \frac{1}{|\xi|^2}\left|e^{tD(\xi)}(\hat{m}_0\cdot\xi)\right|^2d\xi\nnm\\
\le &C\(\eps^2t^{-1} e^{-t}+ \frac1{\eps^2}e^{-2\frac{at}{\eps^2}}\)\|f_0\|^2_{H^1_{x}(L^2_v)}. \label{expan2}
\ema
Combining \eqref{expan1} and \eqref{expan2}, we obtain \eqref{limit2}.
This proves the lemma.
\end{proof}

\section{Diffusion limit}
\setcounter{equation}{0}
\label{sect4}
In this section, we study the diffusion limit of the solution to the nonlinear VPFP system \eqref{VPFP4}--\eqref{VPFP6} based on the fluid approximations of the solution to the linear VPFP system given in Section 3.

Since the operators $B_{\eps}$ and $A$ generate contraction semigroups in $H^k_P$ and $H^k_x$ $(k\ge 0)$ respectively, the solution $f_{\eps}(t)$ to the VPFP system \eqref{VPFP4}--\eqref{VPFP6} and the solution $n(t)$ to the DDP system \eqref{n_1}--\eqref{n_2} can be represented by
\bma
f_{\eps}(t)&=e^{\frac{t}{\eps^2}B_{\eps}}f_0+\intt \frac1{\eps}e^{\frac{t-s}{\eps^2}B_{\eps}}G(f_{\eps})(s)ds, \label{fe}
\\
 n(t)&=e^{tD}n_0-\intt e^{(t-s)D}\divx(n\Tdx\Phi )(s)ds, \label{ne}
\ema
where
$$n_0=\intr f_0\sqrt{M}dv.$$


By taking inner product between $\sqrt M$ and \eqref{VPFP4}, we obtain
\be
\dt n_{\eps}+\frac{1}{\eps}\divx m_{\eps}=0, \label{G_3a}
\ee
where
$$n_{\eps}=(f_{\eps},\sqrt M), \quad m_{\eps}=(f_{\eps},v\sqrt M).$$
Taking the microscopic projection $ P_1$ to \eqref{VPFP4}, we have
\bma
&\dt( P_1f_{\eps})+ \frac{1}{\eps}P_1(v\cdot\Tdx  P_1f_{\eps})-\frac{1}{\eps}v\sqrt M\cdot\Tdx\Phi_{\eps}-\frac{1}{\eps^2}L( P_1f_{\eps})\nnm\\
=&- \frac{1}{\eps}P_1(v\cdot\Tdx  P_0f_{\eps})+ \frac{1}{\eps}P_1 G(f_{\eps}).\label{GG2}
\ema
By \eqref{GG2}, we can express the microscopic part $ P_1f_{\eps}$ as
\bq   P_1f_{\eps}=  L^{-1}[\eps^2\dt(P_1f_{\eps})+ \eps P_1(v\cdot\Tdx  P_1f_{\eps})- \eps P_1 G ]-\eps v\sqrt{M}\cdot (\Tdx n_{\eps}-\Tdx \Phi_{\eps}). \label{p_c}\eq
Substituting \eqref{p_c} into \eqref{G_3a}, we obtain
\be
\dt n_{\eps}+ \eps\dt\divx m_{\eps} + n_{\eps}- \Delta_x n_{\eps}=-\divx(v\cdot\Tdx P_1f_{\eps}-P_1G ,v\sqrt M). \label{G_9a}
\ee
Define the energy  $E(f_{\eps})$ and the dissipation $D(f_{\eps})$ by
\bmas
 E(f_{\eps})&= \|f_{\eps}(t)\|^2_{H^4_x(L^2_v)}+\|\Tdx\Phi_{\eps}(t)\|^2_{H^4_x},\\
D(f_{\eps})&= \frac{1}{\eps^2}\|P_1f_{\eps}\|^2_{H^4_x(L^2_\sigma)}+ \| P_0f_{\eps}\|^2_{H^4_x(L^2_v)}  +\| \Tdx\Phi_{\eps}\|^2_{H^4_x},
\emas
where $\Tdx\Phi_{\eps}=\Tdx \Delta^{-1}_x(f_{\eps},\sqrt M)$.

First, we establish the following energy estimate for the solution $f_{\eps}$ to the system \eqref{VPFP4}--\eqref{VPFP6}.

\begin{lem}\label{energy1}For any $\eps\ll 1$, there exists a small constant $\delta_0>0$ such that if $\|f_{0}\|_{H^4_x(L^2_v)}+\|(f_0,\sqrt{M})\|_{L^1_x}\le \delta_0$, then
the  system \eqref{VPFP4}--\eqref{VPFP6} admits a unique global solution $f_{\eps}(t)= f_{\eps}(t,x,v) $ satisfying the following energy estimate:
\be
  E(f_{\eps}(t))+ \intt  D(f_{\eps}(s))ds \le C\delta_0^2, \label{G_1}
\ee
where $C>0$ is a constant independent of $\eps$.  Moreover,  $f_{\eps}(t)$ has the following time-decay rate:
\be
  E(f_{\eps}(t)) \le C\delta_0^2 e^{-t}. \label{G_11}
\ee
\end{lem}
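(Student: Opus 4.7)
The plan is the standard energy-dissipation method for kinetic perturbation problems: local existence in $H^4_x(L^2_v)$ by iteration/contraction, a high-order a priori estimate with dissipation, closure via smallness plus a continuation argument for the bound \eqref{G_1}, and then exponential decay \eqref{G_11} from the coercivity of the dissipation over the energy.

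For the a priori estimate I would apply $\dxa$ for each $|\alpha|\le 4$ to \eqref{VPFP4} and pair with $\dxa f_\eps$ in $L^2(\R^3_x\times\R^3_v)$. The transport $\frac{1}{\eps}v\cdot\Tdx$ is skew-symmetric and vanishes; the collision term gives $\frac{\mu}{\eps^2}\|\dxa P_1 f_\eps\|^2_{L^2_\sigma}$ by \eqref{L_4}; and the Poisson coupling $-\frac{1}{\eps}v\sqrt M\cdot\Tdx\Phi_\eps$ together with \eqref{VPFP5} produces, after integration by parts in $x$, the time derivative $\tfrac12\frac{d}{dt}\|\dxa\Tdx\Phi_\eps\|^2_{L^2_x}$. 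The nonlinear $\frac{1}{\eps}G(f_\eps)$ is handled by Leibniz expansion and, for the $\Tdx\Phi_\eps\cdot\Tdv f_\eps$ piece, integration by parts in $v$; each resulting trilinear form is bounded via Sobolev embedding $H^2_x\hookrightarrow L^\infty_x$. Summing over $|\alpha|\le 4$ yields
\bmas \frac{d}{dt}E(f_\eps)+\frac{2\mu}{\eps^2}\|P_1 f_\eps\|^2_{H^4_x(L^2_\sigma)}\le C E(f_\eps)^{1/2}D(f_\eps). \emas
To recover the macroscopic dissipation $\|P_0 f_\eps\|^2_{H^4_x(L^2_v)}+\|\Tdx\Phi_\eps\|^2_{H^4_x}$, I would exploit the drift-diffusion structure \eqref{G_9a} derived from \eqref{G_3a} and \eqref{p_c}: pairing $\dxa$ applied to \eqref{G_9a} with $\dxa n_\eps$ gives $\|n_\eps\|^2_{H^4_x}+\|\Tdx n_\eps\|^2_{H^4_x}$ modulo a cross term $\eps(\dxa\divx m_\eps,\dxa n_\eps)$ absorbed into an equivalent energy, together with a source controlled by $\eps^{-2}\|P_1 f_\eps\|^2_{H^4_x(L^2_\sigma)}+CE(f_\eps)^{1/2}D(f_\eps)$. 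The low-frequency $L^2_x$ part of $\|\Tdx\Phi_\eps\|^2$ uses the $L^1_x$ control on $n_0=(f_0,\sqrt M)$, propagated via Duhamel against $e^{tD}$ in \eqref{ne} and combined with the Hardy-Littlewood-Sobolev-type estimate $\|\Tdx\Phi_\eps\|_{L^2_x}\lesssim\|n_\eps\|_{L^{6/5}_x}\lesssim\|n_\eps\|^{1/3}_{L^1_x}\|n_\eps\|^{2/3}_{L^2_x}$.

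An appropriate linear combination of the two estimates yields, for an equivalent energy $\widetilde E\sim E$,
\bmas \frac{d}{dt}\widetilde E(f_\eps)+\kappa D(f_\eps)\le CE(f_\eps)^{1/2}D(f_\eps); \emas
under the bootstrap assumption $E(f_\eps)^{1/2}\le\delta_0\ll 1$ the nonlinear term is absorbed and a continuity argument extends the local solution globally, giving \eqref{G_1}. Because $D(f_\eps)\ge cE(f_\eps)$ (the $H^4_x$-dissipation of $n_\eps$ and $\Tdx\Phi_\eps$ together with the $\eps^{-2}$-weighted microscopic dissipation dominate the energy), Gronwall's inequality delivers \eqref{G_11}. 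The main obstacle is the formally $\eps^{-1}$-singular nonlinearity: each of the two terms of $\eps^{-1}G(f_\eps)$ must be split so that one factor supplies an $E^{1/2}\le\delta_0$ (using Sobolev embedding) while the remaining pieces, after integration by parts in $v$, are routed into the microscopic $\eps^{-2}\|P_1 f_\eps\|^2_{L^2_\sigma}$-dissipation, so that the singular $\eps^{-1}$ is absorbed without loss; equally delicate is recovering the low-frequency $L^2_x$ part of the $\|\Tdx\Phi_\eps\|^2_{H^4_x}$ dissipation, which genuinely requires the $L^1_x$ hypothesis and its time propagation.
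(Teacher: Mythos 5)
Your overall architecture (local existence, $H^4_x(L^2_v)$ energy estimate with $\eps^{-2}$-weighted microscopic dissipation, macroscopic estimate from \eqref{G_9a} with the $\eps\,(\dxa\divx m_\eps,\dxa n_\eps)$ cross term, bootstrap, then $D\gtrsim E$ and Gronwall) matches the paper. The genuine gap is in how you produce the dissipation of $\|\Tdx\Phi_\eps\|^2_{L^2_x}$ and, relatedly, in where the hypothesis $\|(f_0,\sqrt M)\|_{L^1_x}\le\delta_0$ enters. You propose to propagate $\|n_\eps(t)\|_{L^1_x}$ "via Duhamel against $e^{tD}$ in \eqref{ne}" and then use $\|\Tdx\Phi_\eps\|_{L^2_x}\lesssim\|n_\eps\|_{L^1_x}^{1/3}\|n_\eps\|_{L^2_x}^{2/3}$. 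But \eqref{ne} is the Duhamel formula for the limiting DDP density $n$, not for the VPFP macroscopic density $n_\eps$; for $n_\eps$ the analogous representation carries the microscopic flux $\divx m_\eps$ as a source, and nothing in the hypotheses (only $(f_0,\sqrt M)\in L^1_x$, with $f_0$ merely in $H^4_x(L^2_v)$) gives an $L^1_x$-in-time control of that flux, so the $L^1$ propagation is unjustified. Moreover, even granting $\|n_\eps(t)\|_{L^1_x}\le C\delta_0$, the resulting bound $\|\Tdx\Phi_\eps\|^2_{L^2_x}\lesssim\delta_0^{2/3}\|n_\eps\|_{L^2_x}^{4/3}$ is sublinear in the dissipation and does \emph{not} give $E(f_\eps)\le CD(f_\eps)$; a Gronwall argument with such a relation yields only algebraic-type decay, not the exponential rate \eqref{G_11}. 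The paper's mechanism is simpler and avoids all of this: pair $\dxa\eqref{G_9a}$ with $-\dxa\Phi_\eps$ (in addition to the pairing with $\dxa n_\eps$); since $\Delta_x\Phi_\eps=n_\eps$, the term $\dt n_\eps$ produces $\frac12\frac{d}{dt}\|\dxa\Tdx\Phi_\eps\|^2_{L^2_x}$ and the term $n_\eps-\Delta_x n_\eps$ produces $\|\dxa\Tdx\Phi_\eps\|^2_{L^2_x}+\|\dxa n_\eps\|^2_{L^2_x}$ directly, giving the full macroscopic dissipation with no interpolation and no $L^1$ propagation. The $L^1_x$ assumption is needed only once, at $t=0$, to make the \emph{initial} energy finite and small: $\|\Tdx\Phi_0\|^2_{H^4_x}=\intr|\xi|^{-2}(1+|\xi|^2)^2|(\hat f_0,\sqrt M)|^2d\xi\le C(\|f_0\|^2_{H^4_x(L^2_v)}+\|(f_0,\sqrt M)\|^2_{L^1_x})$, the low-frequency singularity $|\xi|^{-2}$ being controlled by $\sup_{|\xi|\le1}|(\hat f_0,\sqrt M)|\le\|(f_0,\sqrt M)\|_{L^1_x}$.

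A secondary slip: you claim the macroscopic estimate from \eqref{G_9a} at the full $H^4_x$ level for $n_\eps$. The right-hand side of \eqref{G_9a} contains $\divx(v\cdot\Tdx P_1f_\eps,v\sqrt M)$, so pairing with $\dxa n_\eps$ for $|\alpha|=4$ would require five $x$-derivatives of $P_1f_\eps$, exceeding the $H^4$ energy. The estimate must be closed with $|\alpha|\le3$ (as in \eqref{en_5}--\eqref{en_6}); this still controls $\|n_\eps\|_{H^4_x}$ through the combination $\|n_\eps\|^2_{H^3_x}+\|\Tdx n_\eps\|^2_{H^3_x}$, and $\|\Tdx\Phi_\eps\|_{H^4_x}$ through the Poisson equation. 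With these two corrections (the $-\Phi_\eps$ pairing replacing the HLS/$L^1$-propagation route, and the index restriction), your argument coincides with the paper's proof.
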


\begin{proof}

First, we establish the macroscopic energy estimate of $f_{\eps}$. Taking the inner product between $\dxa n_{\eps}$ and $\dxa\eqref{G_9a}$ with $|\alpha|\le 3$, we have
\bma
&\Dt \|\dxa n_{\eps}\|^2_{L^2_x}+2\Dt \eps\intr\dxa\divx m_{\eps}\dxa n_{\eps} dx+\frac32\(\|\dxa n_{\eps}\|^2_{L^2_x}+\|\dxa \Tdx n_{\eps}\|^2_{L^2_x}\)\nnm\\
\le& C\|\dxa\Tdx P_1f_{\eps}\|^2_{L^2_{x}(L^2_{v})}+CE(f_{\eps})D(f_{\eps}).\label{en_5}
\ema
Similarly, taking the inner product between $-\dxa \Phi_{\eps}$ and $\dxa\eqref{G_9a}$ with $|\alpha|\le 3$ we obtain
\bma
&\Dt \|\dxa \Tdx\Phi_{\eps}\|^2_{L^2_x}+2\Dt \eps\intr\dxa m_{\eps}\dxa \Tdx\Phi_{\eps} dx+\frac32(\|\dxa \Tdx\Phi_{\eps}\|^2_{L^2_x}+\|\dxa  n_{\eps}\|^2_{L^2_x})\nnm\\
\le& C\(\|\dxa P_1f_{\eps}\|^2_{L^2_{x}(L^2_{v})}+\|\dxa\Tdx P_1f_{\eps}\|^2_{L^2_{x}(L^2_{v})}\)+CE(f_{\eps})D(f_{\eps}).\label{en_6}
\ema
Taking the summation of $\eqref{en_5}+\eqref{en_6}$ with $|\alpha|\le 3$, we obtain
\bma
&\Dt \(\|n_{\eps}\|^2_{H^3_x} +\|\Tdx\Phi_{\eps}\|^2_{H^3_x}\)+\frac32\(\|\Tdx n_{\eps}\|^2_{H^3_x}+2\| n_{\eps}\|^2_{H^3_x}  +\| \Tdx\Phi_{\eps}\|^2_{H^3_x}\)\nnm\\
&+\Dt \sum_{ |\alpha|\le 3}\eps\(2 \intr\dxa\divx m_{\eps}\dxa n_{\eps} dx+2 \intr\dxa m_{\eps}\dxa \Tdx\Phi_{\eps} dx\)\nnm\\
\le & CE(f_{\eps})D(f_{\eps})+C\| P_1f_{\eps}\|^2_{H^4_x(L^2_{v})}.\label{E_1a}
\ema

Next, we deal with the microscopic energy estimate of $f_{\eps}$. By taking the inner product between $\dxa  f_{\eps}$ and $\dxa\eqref{VPFP4}$ with $|\alpha|\le 4$, we have
\bma
&\frac12\Dt \(\|\dxa f_{\eps}\|^2_{L^2_{x}(L^2_{v})}+\|\dxa\Tdx\Phi_{\eps}\|^2_{L^2_x}\)-\frac{1}{\eps^2}\intr (L\dxa f_{\eps})\dxa f_{\eps}dxdv
\nnm\\
=&\frac{1}{2\eps}\intrr  \dxa (v\cdot\Tdx \Phi_{\eps} f_{\eps})\dxa P_1f_{\eps}dxdv-\frac{1}{\eps}\intrr \dxa (\Tdx \Phi_{\eps}\cdot \Tdv f_{\eps})\dxa P_1f_{\eps}dxdv\nnm\\
\le &\frac{\mu}{2\eps^2}\|  \dxa P_1f_{\eps}\|^2_{L^2_{x}(L^2_{\sigma})}+ CE(f_{\eps})D(f_{\eps}),\label{G_0}
\ema
which leads to
\bma
 \Dt \(\|f_{\eps}\|^2_{H^4_x(L^2_{v})}+\|\Tdx\Phi_{\eps}\|^2_{H^4_x}\)+\frac{\mu}{\eps^2}\| P_1f_{\eps}\|^2_{H^4_x(L^2_{\sigma})}
\le CE(f_{\eps})D(f_{\eps}).\label{G_00}
\ema
Taking the summation of $C_1\eqref{E_1a}+\eqref{G_00}$ with $C_1>0 $ large and $\eps>0$ small  to get
\bma
 &\Dt \(\|f_{\eps}\|^2_{H^4_x(L^2_{v})}+C_1\|n_{\eps}\|^2_{H^3_x}+(C_1+1)\|\Tdx\Phi_{\eps}\|^2_{H^4_x}\)\nnm\\
 &+\Dt C_1\sum_{ |\alpha|\le 3}\eps\(2 \intr\dxa\divx m_{\eps}\dxa n_{\eps} dx+2 \intr\dxa m_{\eps}\dxa \Tdx\Phi_{\eps} dx\)\nnm\\
  &+  \frac{\mu}{\eps^2}\|P_1f_{\eps}\|^2_{H^4_x(L^2_\sigma)}+\frac32C_1\(\|\Tdx n_{\eps}\|^2_{H^3_x}+2\| n_{\eps}\|^2_{H^3_x}  +\| \Tdx\Phi_{\eps}\|^2_{H^3_x}\)  \nnm\\
 \le& CE(f_{\eps})D(f_{\eps}) . \label{G_10}
\ema

Assume that $E(f_{\eps})\le C\delta_0$. It follows from \eqref{G_10} that for $\eps>0$ and $\delta_0>0$ sufficiently small, there exist two functionals $E_1(f_\eps)\sim E(f_\eps)$ and $D_1(f_\eps)\sim D(f_\eps)$ such that 
\be
\Dt E_1(f_\eps)+ D_1(f_\eps)\le 0,\quad D_1(f_\eps)\ge E_1(f_\eps). \label{G_2}
\ee

Since
\bmas
\|\Tdx\Phi_0\|^2_{H^4_x}&=\intr \frac1{|\xi|^2}(1+|\xi|^2)^2|( \hat{f}_0,\sqrt{M})|^2d\xi \\
&\le C\sup_{|\xi|\le1}|( \hat{f}_0,\sqrt{M})|^2\int_{|\xi|\le1} \frac1{|\xi|^2}d\xi+\int_{|\xi|\ge 1} (1+|\xi|^2)^2|( \hat{f}_0,\sqrt{M})|^2d\xi \\
&\le C\(\|f_0\|^2_{H^4_x(L^2_v)}+\|( f_0,\sqrt{M})\|^2_{L^1_x}\),
\emas
we can prove \eqref{G_1} and \eqref{G_11} by \eqref{G_2} for $\eps>0$ and $\delta_0>0$ sufficiently small.
This completes the proof of the lemma.
\end{proof}

\begin{lem}\label{energy2}For any $\eps\ll 1$, there exists a small constant $\delta_0>0$ such that if $\|f_{0}\|_{H^4_x(L^2_v)}+\|\Tdv f_{0}\|_{H^3_x(L^2_v)}+\||v| f_{0}\|_{H^3_x(L^2_v)}+\|(f_0,\sqrt{M})\|_{L^1_x}\le \delta_0$, then
the solution $f_{\eps}(t)= f_{\eps}(t,x,v) $ to the system \eqref{VPFP4}--\eqref{VPFP6} satisfies
\bma
 \|f_{\eps}\|^2_{H^4_x(L^2_v)}+\|\Tdv f_{\eps}\|^2_{H^3_x(L^2_v)}+\||v| f_{\eps}\|^2_{H^3_x(L^2_v)}+\|\Tdx\Phi_{\eps}\|^2_{H^4_x}
 \le C\delta^2_0e^{-t}  , \label{G_10a}
\ema
where $C>0$ is a constant independent of  $\eps$.
\end{lem}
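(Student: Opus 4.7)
The $L^2_v$-bound $\|f_\eps\|^2_{H^4_x(L^2_v)}+\|\Tdx\Phi_\eps\|^2_{H^4_x}\le C\delta_0^2 e^{-t}$ is already delivered by Lemma~\ref{energy1}, so my plan focuses on the additional weighted part $\|\Tdv f_\eps\|^2_{H^3_x(L^2_v)}+\||v|f_\eps\|^2_{H^3_x(L^2_v)}$. The main structural tool will be the identity $(-Lf,f)=\|\Tdv f+\tfrac{v}{2}f\|^2$, which upon expanding the square and integrating by parts is equivalent to
\bq \label{plan:id}
\|\Tdv f\|^2+\tfrac14\|vf\|^2=(-Lf,f)+\tfrac32\|f\|^2.
\eq
Because the $\|f_\eps\|^2_{H^3_x(L^2_v)}$ piece on the right already enjoys exponential decay from Lemma~\ref{energy1}, it will be enough to propagate the quantity $\sum_{|\alpha|\le 3}(-L\dxa f_\eps,\dxa f_\eps)$ with rate $e^{-t}$.

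The core step will be an energy estimate obtained by applying $\dxa$ ($|\alpha|\le 3$) to \eqref{VPFP4} and testing with $-L\dxa f_\eps$ in $L^2(\R^3_x\times\R^3_v)$. Self-adjointness of $L$ will turn the time derivative into $\frac12\Dt(-L\dxa f_\eps,\dxa f_\eps)$ and the Fokker-Planck term into $\frac{1}{\eps^2}\|L\dxa f_\eps\|^2$. After applying Cauchy-Schwarz to the three $\eps^{-1}$ cross terms---transport, electric force and nonlinearity---with weights chosen to absorb $\frac{1}{2\eps^2}\|L\dxa f_\eps\|^2$ back onto the left, I expect to arrive at
\bq \label{plan:D}
\Dt(-L\dxa f_\eps,\dxa f_\eps)+\frac{1}{\eps^2}\|L\dxa f_\eps\|^2\le C\bigl(\|v\cdot\Tdx\dxa f_\eps\|^2+\|\dxa\Tdx\Phi_\eps\|^2+\|\dxa G(f_\eps)\|^2\bigr).
\eq
Since $Lg=LP_1 g$ and the nonzero spectrum of $-L$ is contained in $[1,\infty)$, the coercivity bound $\|Lg\|^2\ge (-Lg,g)$ holds, and so the dissipation on the left will control $\frac{1}{\eps^2}(-L\dxa f_\eps,\dxa f_\eps)$.

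Each source on the right of \eqref{plan:D} will then be bounded in terms of quantities already produced by Lemma~\ref{energy1}. Writing $f_\eps=P_0 f_\eps+P_1 f_\eps$ and using $\|vg\|^2\le \|g\|^2_{L^2_\sigma}$, I expect to get $\|v\cdot\Tdx\dxa f_\eps\|^2\le C(\|n_\eps\|^2_{H^4_x}+\|P_1 f_\eps\|^2_{H^4_x(L^2_\sigma)})\le C\,E(f_\eps)+C\eps^2 D(f_\eps)$, while $\|\dxa\Tdx\Phi_\eps\|^2\le C\,E(f_\eps)$ is immediate. A Sobolev product estimate together with \eqref{plan:id} should bound the nonlinearity by $CE(f_\eps)\bigl(\sum_{|\alpha|\le 3}(-L\dxa f_\eps,\dxa f_\eps)+\|f_\eps\|^2_{H^3_x(L^2_v)}\bigr)$, which is absorbed into the left-hand side by the smallness of $\delta_0$.

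The final step will be to sum \eqref{plan:D} over $|\alpha|\le 3$ and couple it with a large multiple of the dissipation inequality \eqref{G_2} from Lemma~\ref{energy1}, introducing the augmented functionals $\widetilde E=C_0 E(f_\eps)+\sum_{|\alpha|\le 3}(-L\dxa f_\eps,\dxa f_\eps)$ and $\widetilde D=C_0 D(f_\eps)+\frac{1}{\eps^2}\sum_{|\alpha|\le 3}(-L\dxa f_\eps,\dxa f_\eps)$. For $C_0$ large and $\delta_0,\eps$ small, the structure should produce $\Dt\widetilde E+\widetilde D\le 0$ together with $\widetilde D\ge \widetilde E$, so that $\widetilde E(t)\le \widetilde E(0)e^{-t}\le C\delta_0^2 e^{-t}$; an application of \eqref{plan:id} then yields \eqref{G_10a}. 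The hard part will be the transport cross term $\|v\cdot\Tdx\dxa f_\eps\|^2$: it contains one more $x$-derivative than the quantity I am propagating, so closing the estimate forces me to use simultaneously the pointwise $E$-bound and the integrated $\eps^2 D$-dissipation of Lemma~\ref{energy1} at the $H^4_x$ level, which is precisely why that higher-regularity a priori bound was carried along.
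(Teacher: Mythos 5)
Your plan is correct, and it reaches \eqref{G_10a} by a genuinely different route than the paper. The paper first projects: it applies $P_1$ to \eqref{VPFP4} (equation \eqref{b_1a}), then derives two separate weighted estimates for the microscopic part, one by testing $\dxa\eqref{b_1a}$ with $|v|^2\dxa P_1f_{\eps}$ (giving \eqref{aa}) and one by first applying $\dvb$, $|\beta|=1$, and testing with $\dxa\dvb P_1f_{\eps}$ (giving \eqref{aa1}, at the cost of commutator terms such as $\frac1\eps\dxb P_1f_{\eps}-\frac1{\eps^2}v^{\beta}P_1f_{\eps}$); it then closes by adding a large multiple $C_2$ of \eqref{G_10}, whose $\frac{\mu}{\eps^2}\|P_1f_{\eps}\|^2_{H^4_x(L^2_\sigma)}$ dissipation absorbs the $\frac{C}{\eps^2}$-weighted lower-order terms, exactly as your large multiple of \eqref{G_2} absorbs your source terms. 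You instead propagate the single Dirichlet-form quantity $\sum_{|\alpha|\le3}(-L\dxa f_{\eps},\dxa f_{\eps})$ by testing $\dxa\eqref{VPFP4}$ with $-L\dxa f_{\eps}$, and recover both weighted norms at once from $\|\Tdv g\|^2+\tfrac14\|vg\|^2=(-Lg,g)+\tfrac32\|g\|^2$; the spectral bound $\|Lg\|^2\ge(-Lg,g)$ (eigenvalues of $-L$ are $0,1,2,\dots$) plays the role that the paper's explicit dissipation terms $\||v|^2P_1f_{\eps}\|$, $\|\Tdv^2P_1f_{\eps}\|$, $\||v|\Tdv P_1f_{\eps}\|$ play there, and your treatment of the extra $x$-derivative in the transport source (control by $\|P_0f_{\eps}\|^2_{H^4_x(L^2_v)}+\eps^2\cdot\frac1{\eps^2}\|P_1f_{\eps}\|^2_{H^4_x(L^2_\sigma)}$) mirrors the paper's $\|\Tdx f_{\eps}\|^2_{H^3_x(L^2_v)}$ term absorbed through the $H^4_x$-level coupling. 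What your route buys is a cleaner bookkeeping (no $P_1$-projection of the equation, no $v$-differentiated commutators, both weights in one identity); what the paper's route buys is avoidance of the strong quantity $\|L\dxa f_{\eps}\|^2$ (it only needs the milder weighted dissipations), which makes the a priori regularity bookkeeping marginally lighter, though both arguments require the same formal justification. One small remark: your closing comment that you must invoke the \emph{time-integrated} $\eps^2D$-dissipation is unnecessary and would in fact only yield a rate $e^{-t/2}$ if used via Duhamel; the augmented-functional step you describe just before it already absorbs the source $CD(f_{\eps})$ pointwise in time into $C_0D_1$ from \eqref{G_2}, which is the mechanism the paper uses and which yields the stated rate since $\eps^{-2}\gg1$.
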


\begin{proof}
By taking $P_1$ to \eqref{VPFP4} and noting that $P_1Lf_{\eps}=LP_1f_{\eps}$, we have
\bma
&\dt P_1 f_{\eps}+\frac{1}{\eps}v\cdot\Tdx P_1f_{\eps}-\frac{1}{\eps^2}\Delta_v P_1f_{\eps}+\frac{1}{\eps^2}\frac{|v|^2}4P_1 f_{\eps}-\frac{1}{\eps^2}\frac32 P_1 f_{\eps}\nnm\\
=&\frac{1}{\eps}v\sqrt{M}\cdot\Tdx \Phi_{\eps}-\frac{1}{\eps}v\cdot\Tdx P_0 f_{\eps}+\frac{1}{\eps}P_0(v\cdot\Tdx P_1 f_{\eps})\nnm\\
&+\frac1{2\eps}v\cdot\Tdx\Phi_{\eps} f_{\eps} + \frac{1}{\eps}\Tdx\Phi_{\eps}\cdot \Tdv f_{\eps}.\label{b_1a}
\ema
Taking the inner product between $|v|^2\dxa P_1 f_{\eps}$ and $\dxa\eqref{b_1a}$ with $|\alpha|\le 3$,  we obtain
\bma
&\Dt \||v| P_1f_{\eps}\|^2_{H^3_x(L^2_{v})}+ \frac{1}{\eps^2}\(\||v|\Tdv P_1 f_{\eps}\|^2_{H^3_x(L^2_{v})}+\frac14\||v|^2 P_1f_{\eps}\|^2_{H^3_x(L^2_{v})}\)\nnm\\
\le&  C\(\|\Tdx f_{\eps}\|^2_{H^3_x(L^2_{v})}+\| \Tdx\Phi_{\eps}\|^2_{H^3_x}\)+\frac{C}{\eps^2}\| |v|P_1f_{\eps}\|^2_{H^3_x(L^2_{v})}\nnm\\
&  +C\|\Tdx\Phi_{\eps}\|^2_{H^{3}_x}\(\||v| f_{\eps}\|^2_{H^{3}_x(L^2_v)}+\|\Tdv f_{\eps}\|^2_{H^{3}_x(L^2_v)}\).\label{aa}
\ema

Taking $\dvb$ with $|\beta|=1$ to \eqref{b_1a}, we get
\bma
&\dt \dvb P_1f_{\eps}+\frac{1}{\eps}v\cdot\Tdx \dvb P_1f_{\eps}-\frac{1}{\eps^2}\Delta_v \dvb P_1f_{\eps}+\frac{1}{\eps^2}\frac{|v|^2}2\dvb P_1f_{\eps}-\frac{1}{\eps^2}\frac32\dvb P_1f_{\eps}\nnm\\
=&\frac{1}{\eps}\dxb P_1f_{\eps}-\frac{1}{\eps^2}v^\beta P_1f_{\eps}+\frac{1}{\eps}\dvb(v\sqrt{M})\cdot\Tdx\Phi_{\eps}-\frac{1}{\eps}\dvb (v\cdot\Tdx P_0 f_{\eps})+\frac{1}{\eps}\dvb P_0(v\cdot\Tdx P_1 f_{\eps})\nnm\\
&+\frac1{2\eps}v\cdot\Tdx\Phi_{\eps} \dvb f_{\eps}+ \frac1{2\eps}\dxb\Phi_{\eps} f_{\eps} + \frac{1}{\eps}\Tdx\Phi_{\eps}\cdot \Tdv\dvb f_{\eps}.\label{b_1}
\ema
Taking the inner product between $\dxa\dvb P_1f_{\eps}$ and $\dxa\eqref{b_1}$ with $|\alpha|\le 3$,  we obtain
\bma
&\Dt \|\Tdv P_1f_{\eps}\|^2_{H^3_x(L^2_{v})}+ \frac{1}{\eps^2}\(\| \Tdv^2 P_1f_{\eps}\|^2_{H^3_x(L^2_{v})}+\||v|\Tdv P_1f_{\eps}\|^2_{H^3_x(L^2_{v})}\)\nnm\\
\le& C \(\| \Tdx f_{\eps}\|^2_{H^3_x(L^2_{v})}+\|\Tdx\Phi_{\eps}\|^2_{H^3_x}\)+\frac{C}{\eps^2}\(\| \Tdv P_1f_{\eps}\|^2_{H^3_x(L^2_{v})}+\||v| P_1f_{\eps}\|^2_{H^3_x(L^2_{v})}\)\nnm\\
& + C\|\Tdx\Phi_{\eps}\|^2_{H^{3}_x}\(\|f_{\eps}\|^2_{H^{3}_x(L^2_v)}+\|\Tdv f_{\eps}\|^2_{H^{3}_x(L^2_v)}\).\label{aa1}
\ema

Taking the summation of $C_2 \eqref{G_10}+\eqref{aa}+\eqref{aa1}$  with $C_2>0$ large and taking  $\delta_0>0$ sufficiently small, we can prove \eqref{G_10a} by a similar argument as \eqref{G_10}.
\end{proof}

By a similar argument as Lemma \ref{energy1}, we can show

\begin{lem}\label{energy3}
There exists a small constant $\delta_0>0$ such that if $\|n_0\|_{H^4_x}+\|n_{0}\|_{L^1_x}\le \delta_0$, then the  system \eqref{n_1}--\eqref{n_2} admits a unique global solution $n(t)$ satisfying
\be
\|n (t)\|_{H^4_x}+\|\Tdx\Phi(t)\|_{H^4_x}\le C\delta_0 e^{-\frac t2}, \label{DDP}
\ee
where $\Tdx\Phi(t)=\Tdx\Delta^{-1}_xn(t)$ and $C>0$ is a constant.
\end{lem}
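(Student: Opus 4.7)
The plan is to mirror the proof of Lemma \ref{energy1} in the simpler purely macroscopic setting, since the DDP system has neither a microscopic component nor an $\eps$-singular dissipation while the Poisson coupling is structurally identical. I would introduce the energy and dissipation
\[
 E(n)=\|n\|^2_{H^4_x}+\|\Tdx\Phi\|^2_{H^4_x},\qquad
 D(n)=\|n\|^2_{H^4_x}+\|\Tdx n\|^2_{H^4_x}+\|\Tdx\Phi\|^2_{H^4_x},
\]
and aim at the differential inequality $\tfrac{d}{dt}\mathcal{E}_1(n)+\mathcal{D}_1(n)\le 0$ with equivalents $\mathcal{E}_1\sim E$ and $\mathcal{D}_1\sim D$; since the inclusion $\mathcal{D}_1\gtrsim \mathcal{E}_1$ is trivially built into $D$, Gronwall then yields exponential decay at rate $e^{-t}$ for $\mathcal{E}_1$, i.e.\ the $e^{-t/2}$ bound claimed in \eqref{DDP}.

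For the first building block I would apply $\dxa$ with $|\alpha|\le 4$ to \eqref{n_1}, pair with $\dxa n$, and use the structure of the linear part $I-\Delta_x$ to get
\[
 \tfrac12\Dt\|\dxa n\|^2_{L^2_x}+\|\dxa n\|^2_{L^2_x}+\|\dxa\Tdx n\|^2_{L^2_x}
 = -(\dxa\Tdx\cdot(n\Tdx\Phi),\dxa n)_{L^2_x}.
\]
The nonlinear right-hand side is bounded by $C\sqrt{E(n)}\,D(n)$ using the Leibniz rule, $H^2\hookrightarrow L^\infty$ in $\R^3$, and the Moser product estimates in $H^4$, in exact analogy with the treatment of $G(f_\eps)$ in Lemma \ref{energy1}. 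To recover dissipation for $\|\Tdx\Phi\|^2_{H^4_x}$, I would differentiate the Poisson relation $\Delta_x\Phi=n$ in time and insert \eqref{n_1} to derive
\[
 \dt\Tdx\Phi+\Tdx\Phi=\Tdx n-\Tdx\Delta^{-1}_x\Tdx\cdot(n\Tdx\Phi),
\]
then pair with $\dxa\Tdx\Phi$. After integrating by parts in the cross term $(\dxa\Tdx n,\dxa\Tdx\Phi)=-\|\dxa n\|^2_{L^2_x}$ (using $\Delta_x\Phi=n$), and controlling the Riesz-transform contribution on the right by product estimates, the combination of this with the previous estimate gives $\tfrac{d}{dt}\mathcal{E}_1(n)+\mathcal{D}_1(n)\le C\sqrt{E(n)}\,D(n)$.

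The delicate point, exactly as in Lemma \ref{energy1}, is that $\|\Tdx\Phi\|^2_{H^4_x}=\intr(1+|\xi|^2)^4|\xi|^{-2}|\hat n(\xi)|^2 d\xi$ is not controlled by $\|n\|^2_{H^4_x}$ alone near $\xi=0$, and this is where the $L^1_x$ hypothesis enters. Splitting the integral at $|\xi|=1$ and using $\sup_{|\xi|\le 1}|\hat n|\le C\|n\|_{L^1_x}$ gives the initial bound
\[
 \|\Tdx\Phi_0\|^2_{H^4_x}\le C\bigl(\|n_0\|^2_{H^4_x}+\|n_0\|^2_{L^1_x}\bigr)\le C\delta_0^2.
\]
Propagation in time is automatic because integrating \eqref{n_1} over $\R^3$ shows $\intr n(t)dx=e^{-t}\intr n_0 dx$ (the nonlinear term being a pure divergence), so the low-frequency size of $\hat n(t,\cdot)$ stays under control.

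The closing is then a standard bootstrap: under the a priori smallness $E(n(t))\le C\delta_0^2$, the nonlinear term is absorbed into the dissipation, yielding $\tfrac{d}{dt}\mathcal{E}_1(n)+\tfrac12\mathcal{D}_1(n)\le 0$ and the decay $\mathcal{E}_1(t)\le \mathcal{E}_1(0)e^{-t}$, hence \eqref{DDP}. Local existence and uniqueness follow from a standard fixed-point argument for this semilinear parabolic equation with Calder\'on--Zygmund-type nonlinearity, and the uniform a priori estimate extends the solution globally by the usual continuation scheme. The main technical obstacle I anticipate is precisely the low-frequency bookkeeping for $\Tdx\Phi$ described above, since all other ingredients (parabolic dissipation plus Moser product estimates) are routine; this is the one structural feature that forces the $L^1_x$ assumption on $n_0$ and mirrors the role of $\|(f_0,\sqrt M)\|_{L^1_x}$ in Lemma \ref{energy1}.
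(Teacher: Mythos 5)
Your proposal is correct and is essentially the argument the paper intends: the paper gives no separate proof of this lemma, stating only that it follows "by a similar argument as Lemma \ref{energy1}", and your macroscopic energy estimate (pairing the $n$-equation with $\dxa n$, extracting the $\Tdx\Phi$-dissipation through the Poisson coupling, bounding the nonlinearity by $C\sqrt{E}\,D$, and using the $L^1_x$ hypothesis only to control the low-frequency part of $\|\Tdx\Phi_0\|_{H^4_x}$) is exactly that argument transplanted to the purely fluid setting. The only cosmetic difference is that you derive a lifted evolution equation for $\Tdx\Phi$ rather than pairing the density equation with $-\dxa\Phi$ as in \eqref{en_6}, which is the same estimate after an integration by parts.
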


Theorem \ref{existence} is directly follows from Lemmas \ref{energy1}.

With the help of Lemmas \ref{energy1}--\eqref{energy3}, we can prove Theorem  \ref{thm1.1} as follows.


\begin{proof}[\underline{\textbf{Proof of Theorem \ref{thm1.1}}}]
Define
\be
Q_{\eps}(t)=\sup_{0\le s\le t}\(\eps e^{-\frac{s}{2}}+e^{-\frac{as}{\eps^2}}\)^{-1} \|f_{\eps}(s)-n(s)\sqrt M \|_{ H^{2}_P}, \label{Qt}
\ee
where the norm $\|\cdot\|_{H^2_P}$ is defined by \eqref{H2P}.

We claim that
\be
Q_{\eps}(t) \le C\delta_0 ,\quad \forall t>0. \label{limit6}
\ee
It is easy to verify that the estimate  \eqref{limit0}  follows  from \eqref{limit6}.

 By \eqref{fe} and \eqref{ne}, we have
\bma
\|f_{\eps}(t)-n(t)\sqrt{M}\|_{ H^2_P}&\le \left\|e^{\frac{t}{\eps^2}B_{\eps}}f_0-e^{tD}n_0\sqrt{M}\right\|_{ H^2_P}\nnm\\
&\quad+\intt \bigg\|\frac1{\eps}e^{\frac{t-s}{\eps^2}B_{\eps}}G(f_{\eps})-e^{(t-s)D}\divx(n\Tdx\Phi )\sqrt{M}\bigg\|_{ H^2_P}ds\nnm\\
&=:I_1+I_2. \label{f2}
\ema

By \eqref{limit1}, we can bound $I_1$ by
\be I_1\le C\delta_0\(\eps e^{-\frac{t}{2}}+e^{-\frac{at}{\eps^2}}\).\ee

To estimate $I_2$, we decompose
\bma
I_2&\le \intt \bigg\|\frac1{\eps}e^{\frac{t-s}{\eps^2}B_{\eps}}G(f_{\eps})-e^{(t-s)D}\divx (n_{\eps}\Tdx\Phi_{\eps} )\sqrt{M}\bigg\|_{ H^2_P}ds \nnm\\
&\quad+\intt\left\|e^{(t-s)D}\[\divx (n_{\eps}\Tdx\Phi_{\eps} )-\divx(n\Tdx\Phi )\]\sqrt{M}\right\|_{ H^2_P}ds \nnm\\
&=:I_3+I_4. \label{l0}
\ema

Since $(G(f_{\eps}),\sqrt{M})=0$ and $(G(f_{\eps}),v\sqrt{M})=n_{\eps}\Tdx\Phi_{\eps},$ it follows from \eqref{limit2} that
\bma
I_3\le &C \intt\(\eps (t-s)^{-\frac{1}{2}} e^{-\frac{t-s}{2}}+ \frac1{\eps}e^{-\frac{a(t-s)}{\eps^2}}\)\|G(f_{\eps})\|_{H^3_x(L^2_v)}ds\nnm\\
\le &C \delta_0^2\intt \(\eps (t-s)^{-\frac{1}{2}}e^{-\frac{t-s}{2}}+ \frac1{\eps}e^{-\frac{a(t-s)}{\eps^2}}\)e^{-s}ds, \label{l1}
\ema
where we have used (refer to \eqref{G_10a})
$$\|G(f_{\eps})\|_{H^3_x(L^2_v)}\le C\(\|| v| f_{\eps} \|_{H^3_x(L^2_v)} +\| \Tdv f_{\eps}\|_{H^3_x(L^2_v)}\)\|\Tdx\Phi_{\eps}\|_{H^3_x}\le C\delta_0^2e^{-t}.$$
Denote
\be
J_1=\intt \frac1{\eps}e^{-\frac{a(t-s)}{\eps^2}}e^{-s}ds. \label{e2}
\ee
It holds that
\bma
J_1&=\(\int^{t/2}_0+\int^{t}_{t/2}\) \frac1{\eps}e^{-\frac{a(t-s)}{\eps^2}}e^{-s} ds \nnm\\
&\le  \int^{t/2}_0\frac1{\eps}e^{-\frac{a(t-s)}{\eps^2}} ds+e^{-\frac t2}\int^{t}_{t/2} \frac1{\eps}e^{-\frac{a(t-s)}{\eps^2}} ds \nnm\\
&\le C\(\eps e^{-\frac{at}{2\eps^2}}+\eps e^{-\frac t2}\). \label{e1}
\ema
Thus, it follows from \eqref{l1}--\eqref{e1} that
\be
I_3 \le C \delta_0^2\eps \(e^{-\frac{t}{2}}+e^{-\frac{at}{2\eps^2}}\). \label{l3}
\ee

For any vector $F=(F_1,F_2,F_3)\in H^k_x$, we have
$$
\|e^{tD}\divx F\sqrt{M}\|^2_{H^k_P}\le C\intr e^{-2(1+|\xi|^2)t}(1+|\xi|^2)^{k+1} |\hat{F} |^2d\xi
\le Ct^{-1}e^{-t}\|F\|^2_{H^k_x}.
$$
This together with \eqref{Qt}, \eqref{DDP} and \eqref{G_10a} imply that
\bma
I_4\le &C\intt (t-s)^{-\frac12}e^{-\frac{t-s}2}\Big(\|n_{\eps}-n\|_{H^2_x}\|\Tdx\Phi_{\eps}\|_{H^2_x}\nnm\\
&\qquad+\| n\|_{H^2_x}\|\Tdx\Phi_{\eps}-\Tdx\Phi\|_{H^2_x}\Big)ds \nnm\\
\le &C \delta_0 Q_{\eps}(t)\intt (t-s)^{-\frac12}e^{-\frac{t-s}2}\(\eps e^{-s}+ e^{-(\frac12+\frac{a}{\eps^2})s}\)ds. \label{l2}
\ema
Denote
\be
J_2=\intt (t-s)^{-\frac12}e^{-\frac{t-s}2} e^{-(\frac12+\frac{a}{\eps^2})s} ds.
\ee
Since
$$\frac{1}{\eps}\sqrt{s}e^{-\frac{a}{\eps^2}s}\le C e^{-\frac{a}{2\eps^2}s},$$
we have
\be
J_2\le C\eps e^{-\frac t2}\(\int^{t/2}_0+\int^{t}_{t/2}\) (t-s)^{-\frac12}s^{-\frac12}  ds
\le C\eps e^{-\frac t2}. \label{e3}
\ee
Thus, it follows from \eqref{l2}--\eqref{e3} that
\be
I_4 \le C \delta_0 Q_{\eps}(t)\eps  e^{-\frac{t}{2}} . \label{l4}
\ee

By combining \eqref{f2}--\eqref{l0}, \eqref{l3}  and \eqref{l4}, we obtain
\be
Q_{\eps}(t)\le  C\delta_0+C\delta_0^2+C\delta_0 Q_{\eps}(t),
\ee
where $C>0$ is a constant independent of $\eps$. By taking $\delta_0>0$  small enough, we can prove \eqref{limit6}. By \eqref{limit1a} and a similar argument as above, we can prove \eqref{limit_1a}.
\end{proof}

\section{Appendix}
\setcounter{equation}{0}
\label{Appendix}
We give the proof of Theorem \ref{rate1} in the following.

\begin{proof}[\underline{\textbf{Proof of Theorem \ref{rate1}}}]
For simplicity, we assume that $0<\eps\le r_0/2$. By Theorem 2.7 in \cite{Pazy}, it is sufficient to prove \eqref{B_0} for $f\in D(B_{\eps}(\xi)^2)$ because the domain $D(B_{\eps}(\xi)^2)$ is dense in $L^2_{\xi}(\R^3_v)$. By Corollary 7.5 in \cite{Pazy}, the semigroup $e^{\frac{t}{\eps^2}B_{\eps}(\xi)}$ can be represented by
 \bq
  e^{\frac{t}{\eps^2}B_{\eps}(\xi)}f =\frac1{2\pi i}\int^{\kappa+ i\infty}_{\kappa- i\infty}
   e^{ \frac{\lambda t}{\eps^2}}(\lambda-B_{\eps}(\xi))^{-1}fd\lambda,  \quad  f\in D(B_{\eps}(\xi)^2),\,\, \kappa>0.          \label{V_3}
 \eq

First of all, we investigate the formula \eqref{V_3} for $\eps(1+|\xi|)\le r_0$. By \eqref{S_8} we have
 \bq
  (\lambda-B_{\eps}(\xi))^{-1} =[\lambda^{-1}  P_0 +(\lambda  P_1-Q_{\eps}(\xi))^{-1} P_1]-Z_{\eps}(\lambda,\xi)\label{V_1},
 \eq
with the operator $Z_{\eps}(\lambda,\xi)$ defined by
 \bma
 Z_{\eps}(\lambda,\xi)&=[\lambda^{-1}  P_0 +(\lambda  P_1-Q_{\eps}(\xi))^{-1} P_1]
   [I+Y_{\eps}(\lambda,\xi)]^{-1} Y_{\eps}(\lambda,\xi),  \label{V_1a}
\\
 Y_{\eps}(\lambda,\xi)&=  i\eps \lambda^{-1}  P_1(v\cdot\xi)(1+\frac1{|\xi|^2})  P_0
    + i\eps P_0(v\cdot\xi) P_1(\lambda  P_1-Q_{\eps}(\xi))^{-1} P_1. \label{V_1b}
 \ema

By a similar argument as Lemma 3.1 in \cite{Li3}, we conclude from \eqref{S_3} that the operator $Q_{\eps}(\xi) $ generates a strongly continuous contraction
semigroup on $N_0^\bot$, which satisfies for any $t>0$ and $f\in N_0^\bot $ that
  \bq
    \|e^{tQ_{\eps}(\xi)}f\|\leq e^{- t}\|f\|. \label{decay_1}
 \eq
In addition, for any $x>-1 $ and $f\in N_0^\bot $, it holds that
\bq
 \int^{+\infty}_{-\infty}\|[(x+ i y) P_1-Q_{\eps}(\xi)]^{-1}f\|^2dy \leq \pi(x+1 )^{-1}\|f\|^2.\label{S_4}
\eq
Substituting \eqref{V_1} into \eqref{V_3}, we have the following decomposition of the semigroup $e^{\frac{t}{\eps^2}B_{\eps}(\xi)}$
 \be
 e^{\frac{t}{\eps^2}B_{\eps}(\xi)}f = P_0f+e^{\frac{t}{\eps^2}Q_{\eps}(\xi)} P_1f -\frac1{2\pi i}\int^{\kappa+ i\infty}_{\kappa- i\infty}
    e^{\frac{\lambda t}{\eps^2}}Z_{\eps}(\lambda,\xi)fd\lambda,  \quad \eps(1+|\xi|)\le r_0.  \label{V_3a}
  \ee

To estimate the last term on the right hand side of \eqref{V_3a}, let us denote
 \bq
 U_{\kappa,N}=\frac1{2\pi i}\int^{N}_{-N} e^{\frac{t}{\eps^2}(\kappa+ i y)}Z_{\eps}(\kappa+iy,\xi)f dy,    \label{UsN}
 \eq
where the constant $N>0$ is chosen large enough so that $N>y_0$ with $y_0$ defined in Lemma \ref{spectrum2}. Since $Z_{\eps}(\lambda,\xi)$ is analytic on the domain ${\rm Re}\lambda>-1/2$ with only finite singularities at 
$\lambda=\lambda_0(|\xi|,\eps)\in \sigma(B_{\eps}(\xi))$ and $\lambda=0$, we can shift the integration
\eqref{UsN} from the line ${\rm Re}\lambda=\kappa>0$ to ${\rm Re}\lambda=-1/2$. Then by  the Residue Theorem, we obtain
 \bma
 U_{\kappa,N} =& {\rm Res} \lt\{e^{\frac{\lambda t}{\eps^2}}Z_{\eps}(\lambda,\xi)f;\lambda_0(|\xi|,\eps)\rt\}+{\rm Res}
  \lt\{e^{\frac{\lambda t}{\eps^2}}Z_{\eps}(\lambda,\xi)f;0\rt\} \nnm\\
  &+U_{-\frac12,N}+J_N,    \label{UsN2}
 \ema
where Res$\{f(\lambda);\lambda_j\}$ means the residue of $f(\lambda)$ at $\lambda=\lambda_j$ and
$$
  J_N=\frac1{2\pi i}\(\int^{\kappa+ i N}_{-\frac12+ i N}
      -\int^{\kappa- i N}_{-\frac12- i N}\) e^{\frac{\lambda t}{\eps^2}}Z_{\eps}(\lambda,\xi)f d\lambda.
$$
The right hand side of \eqref{UsN2} is estimated as follows.
By Lemma \ref{LP}, it is easy to verify that
 \be
 \|J_N\|_{\xi}\rightarrow0, \quad\mbox{as}\quad N\rightarrow\infty.  \label{UsN2a}
 \ee
Let
 \be
 \lim_{N\to\infty} U_{-\frac12,N}(t) =U_{-\frac12,\infty}(t)
 =:\int^{-\frac12+i \infty}_{-\frac12-i \infty} e^{\frac{\lambda t}{\eps^2}}Z_{\eps}(\lambda,\xi)fd\lambda.   \label{UsN3}
\ee

Since it follows from Lemma \ref{LP} that $\|Y_{\eps}(-\frac12+ i y,\xi)\|_{\xi}\leq 1/2$ for $y\in \R$ and $\eps(1+|\xi|)\leq r_0$ with $r_0>0$ being sufficiently small, the operator $I-Y_{\eps}(-\frac12+ i y,\xi)$ is invertible on $L^2_{\xi}(\R^3_v)$ and satisfies
$\|[I-Y_{\eps}(-\frac12+ i y,\xi)]^{-1}\|_{\xi}\le 2$ for $y\in \R$ and $\eps(1+|\xi|)\leq r_0$.
Thus, we have for any $f,g\in L^2_{\xi}(\R^3_v)$
  \bmas
 &|(U_{-\frac12,\infty}(t)f,g)_{\xi}| \le e^{-\frac{t}{2\eps^2}}\int^{+\infty}_{-\infty} |(Z_{\eps}(\lambda,\xi)f,g)_{\xi}|dy
  \nnm\\
 &\leq C \eps(1+|\xi|)e^{-\frac{t}{2\eps^2}}\int^{+\infty}_{-\infty} \( \|[\lambda P_1-Q_{\eps}(\xi)]^{-1} P_1f\| +|\lambda|^{-1}\| P_0f\|_{\xi}\)
    \nnm \\
&\quad\quad\quad\quad
   \times\(\|[\overline{\lambda} P_1-Q_{\eps}(-\xi)]^{-1} P_1g\| +|\overline{\lambda}|^{-1}\| P_0g\|_{\xi}\) dy,\quad \lambda=-\frac12+ i y.
 \emas
This together with \eqref{S_4} and
$$
  \int^{+\infty}_{-\infty}\|(x+ i y)^{-1} P_0f\|^2_{\xi} dy = \pi|x|^{-1}\|P_0f\|^2_{\xi}
$$
imply that
 $
 |(U_{-\frac12,\infty}(t)f,g)_{\xi}| \leq Cr_0\  e^{-\frac{ t}{2\eps^2}}\|f\|_{\xi}\|g\|_{\xi},
 $
and
 \bq
\|U_{-\frac12,\infty}(t)\|_{\xi} \le Cr_0 e^{-\frac{ t}{2\eps^2}}. \label{UsN4}
 \eq

Since $\lambda_0(|\xi|,\eps),0\in \rho(Q_{\eps}(\xi))$, and
$$Z_{\eps}(\lambda,\xi)=\lambda^{-1}  P_0 +(\lambda P_1-Q_{\eps}(\xi))^{-1} P_1-(\lambda-B_{\eps}(\xi))^{-1},$$
we can obtain
 \bma
 {\rm Res}\{e^{\lambda t}Z_{\eps}(\lambda,\xi)f;\lambda_0(|\xi|,\eps)\}
  &=-{\rm Res}\{e^{\lambda t}(\lambda-B_{\eps}(\xi))^{-1}f;\lambda_0(|\xi|,\eps)\}\nnm\\
 & =-e^{\lambda_0(|\xi|,\eps)t}\Big(f,\overline{\psi_0(\xi,\eps)}\Big)_{\xi}\psi_0(\xi,\eps), \label{projection}\\
  {\rm Res}\{e^{\lambda t}Z_{\eps}(\lambda,\xi)f;0\}
  &={\rm Res}\{e^{\lambda t}\lambda^{-1}P_0f;0\}= P_0f. \label{projection1}
\ema

Therefore, we conclude from \eqref{V_3a} and \eqref{UsN}--\eqref{projection1} that
 \be
   e^{\frac{t}{\eps^2}B_{\eps}(\xi)}f= e^{\frac{t}{\eps^2}Q_{\eps}(\xi)} P_1f+U_{-\frac12,\infty}(t)
  + e^{\frac{t}{\eps^2}\lambda_0(|\xi|,\eps)} \Big(f,\overline{\psi_0(\xi,\eps)}\Big)_{\xi}\psi_0(\xi,\eps) , \label{low}
 \ee
for $\eps(1+|\xi|)\leq r_0$.

 Next, we turn to investigate the formula \eqref{V_3} for $\eps(1+|\xi|)> r_0$. It holds that $\eps|\xi|> r_0/2$.
By \eqref{E_6} we have
 \bq
(\lambda-B_{\eps}(\xi))^{-1}=(\lambda-A_{\eps}(\xi))^{-1}+H_{\eps}(\lambda,\xi),\label{V_2}
 \eq
with
 \bma
 H_{\eps}(\lambda,\xi)&=(\lambda-A_{\eps}(\xi))^{-1}[I-G_{\eps}(\lambda,\xi)]^{-1}G_{\eps}(\lambda,\xi),\label{V_2a}\\
 G_{\eps}(\lambda,\xi)&=(K- i\eps(v\cdot\xi)|\xi|^{-2}P_0)(\lambda-A_{\eps}(\xi))^{-1}.\label{V_2b}
 \ema

By Lemma \ref{SG_1} and a similar argument as Lemma 3.1 in \cite{Li3}, we conclude that the operator $A_{\eps}(\xi) $ generates a strongly continuous contraction
semigroup on $L^2(\R^3_v)$, which satisfies for any $t>0$ and $f\in L^2(\R^3_v) $ that
  \bq
    \|e^{tA_{\eps}(\xi)}f\|\leq e^{- \nu_0t}\|f\|. \label{decay_2}
 \eq
In addition, for any $x>-\nu_0 $ and $f\in L^2(\R^3_v)$, it holds that
 \bq
\int^{+\infty}_{-\infty}\|(x+ i y-A_{\eps}(\xi))^{-1}f\|^2dy\leq \pi(x+\nu_0)^{-1}\|f\|^2. \label{VsNa5}
 \eq
 Substituting \eqref{V_2} into \eqref{V_3} yields
 \bq
 e^{\frac{t}{\eps^2}B_{\eps}(\xi)}f = e^{\frac{t}{\eps^2}A_{\eps}(\xi)}f +\frac1{2\pi i}\int^{\kappa+ i\infty}_{\kappa- i\infty}
    e^{\frac{\lambda t}{\eps^2}}H_{\eps}(\lambda,\xi)fd\lambda,\quad \eps(1+|\xi|)> r_0.   \label{V_3b}
 \eq

Similarly, in order to estimate the last term on the right hand side of \eqref{V_3b},
let us denote
 \bq
V_{\kappa,N} =\frac1{2\pi i}\int^{N}_{-N}e^{\frac{t}{\eps^2}(\kappa+ i y)}H_{\eps}(\kappa+ i y,\xi) dy    \label{VsN}
 \eq
for  sufficiently large constant $N>0$ as  in \eqref{UsN}.
Since the operator $H_{\eps}(\lambda,\xi)$ is analytic on the domain ${\rm Re}\lambda\ge -\sigma_0$ for the constant $\sigma_0= \alpha(r_0/2)/2$ with $\alpha(r)>0$ given by Lemma~\ref{LP01}, we can again shift the integration of \eqref{VsN} from the line ${\rm Re}\lambda=\kappa>0$ to $\mbox{Re}\lambda=-\sigma_0$ to obtain
\bq
 V_{\kappa,N}=V_{-\sigma_0,N}+I_N,   \label{VsN1}
 \eq
with
$$
 I_N=\frac1{2\pi i}\(\int^{-\kappa+ i N}_{-\sigma_0+ i N}-\int^{-\kappa- i N}_{-\sigma_0- i N}\)
     e^{\frac{\lambda t}{\eps^2}}H_{\eps}(\lambda,\xi)f d\lambda.
$$

By Lemma~\ref{LP03}, it holds that
 \be
 \|I_N\|\rightarrow0 \ \ \mbox{as}\ \  N\rightarrow\infty.\label{VsNa}
  \ee
Moreover, by Lemma~\ref{LP03} and a similar argument as Lemma \ref{LP01}, we can obtain
 \be
 \sup_{\eps|\xi|>r_0/2, y\in\R} \|[I-G_{\eps}(-\sigma_0+ i y,\xi)]^{-1}\| \le C.\label{VsNa4}
  \ee
By \eqref{V_2a}, \eqref{VsNa4} and \eqref{VsNa5}, we have for any $f,g\in L^2_{\xi}(\R^3_v)$
 \bma
&|(V_{-\sigma_0,\infty}(t)f,g)|\leq Ce^{-\frac{\sigma_0 t}{\eps^2}}\int^{+\infty}_{-\infty}|(H_{\eps}(\lambda,\xi)f,g)|dy
\nnm\\
&\leq C(\|K\|+\eps^2r_0^{-1})e^{-\frac{\sigma_0 t}{\eps^2}}\int^{+\infty}_{-\infty}\|(\lambda-A_{\eps}(\xi))^{-1}f\|\|(\overline{\lambda}-A_{\eps}(-\xi))^{-1}g\|dy
\nnm\\
&\leq C(\|K\|+\eps^2r_0^{-1})e^{-\frac{\sigma_0 t}{\eps^2}}(\nu_0-\sigma_0)^{-1}\|f\|\|g\|,\quad \lambda=-\sigma_0+ i y.  \label{VsNb1}
 \ema

From \eqref{VsNb1} and the fact $\|f\|^2\leq\|f\|_{\xi}^2\leq(1+4\eps^2r_0^{-2})\|f\|^2$ for $\eps(1+|\xi|)> r_0$, we have
  \bq
  \|V_{-\sigma_0,\infty}(t)\|_{\xi} \leq Ce^{-\frac{\sigma_0 t}{\eps^2}}(\nu_0-\sigma_0)^{-1}. \label{VsN2}
 \eq
Therefore, we conclude from \eqref{V_3b} and \eqref{VsN}--\eqref{VsN2} that
 \bq
   e^{\frac{t}{\eps^2}B_{\eps}(\xi)}f =e^{\frac{t}{\eps^2}A_{\eps}(\xi)}f +V_{-\sigma_0,\infty}(t), \quad \eps(1+|\xi|)> r_0.  \label{high}
 \eq

The combination of \eqref{low} and \eqref{high} gives rise to \eqref{B_0} with $S_1(t,\xi,\eps)f$ and $ S_2(t,\xi,\eps)f$ defined by
 \bmas
 S_1(t,\xi,\eps)f&=e^{\frac{t}{\eps^2}\lambda_0(|\xi|,\eps)} \Big(f,\overline{\psi_0(\xi,\eps)}\Big)_{\xi}\psi_0(\xi,\eps)1_{\{\eps(1+|\xi|)\leq r_0\}}, \\
 S_2(t,\xi,\eps)f&=\(e^{\frac{t}{\eps^2}Q_{\eps}(\xi)} P_1f+U_{-\frac12,\infty}(t)\)1_{\{\eps(1+|\xi|)\leq r_0\}}\\
                   &\quad+\(e^{\frac{t}{\eps^2}A_{\eps}(\xi)}f+V_{-\sigma_0,\infty}(t)\)1_{\{\eps(1+|\xi|)> r_0\}}.
\emas
In particular, $S_2(t,\xi,\eps)f$  satisfies \eqref{S2} in terms of \eqref{decay_1}, \eqref{decay_2} \eqref{UsN4} and  \eqref{VsN2}.
\end{proof}

\medskip
\noindent {\bf Acknowledgements:}
The author would like to thank  Yan Guo for helpful suggestions. The research of the author
was  supported by the National Science Fund for Excellent Young Scholars No. 11922107, the National Natural Science Foundation of China  grants No. 11671100, and Guangxi Natural Science Foundation Nos. 2018GXNSFAA138210 and 2019JJG110010.


\end{document}